\newtheorem{thm}{Theorem}[section]
\newtheorem{cor}[thm]{Corollary}
\newtheorem{lem}[thm]{Lemma}
\newtheorem{prop}[thm]{Proposition}
\theoremstyle{definition}
\newtheorem{rem}[thm]{Remark}
\numberwithin{equation}{section}
\newcommand{\QQ}{\mathbb Q}
\newcommand{\ZZ}{\mathbb Z}
\newcommand{\CC}{\mathbb C}
\newcommand{\PP}{\mathbb P}
\newcommand{\FF}{\mathbb F}
\newcommand{\lra}{\longrightarrow}
\newcommand{\hra}{\hookrightarrow}
\newcommand{\ra}{\rightarrow}
\newcommand{\cA}{\mathcal{A}}
\newcommand{\cK}{\mathcal{K}}
\newcommand{\tC}{\widetilde{C}}
\newcommand{\cC}{\mathcal{C}}
\newcommand{\cE}{\mathcal{E}}
\newcommand{\cM}{\mathcal{M}}
\newcommand{\cN}{\mathcal{N}}
\newcommand{\cO}{\mathcal{O}}
\newcommand{\cP}{\mathcal{P}}
\newcommand{\cR}{\mathcal{R}}
\newcommand{\cS}{\mathcal{S}}
\newcommand{\cZ}{\mathcal{Z}}
\newcommand{\cX}{\mathcal{X}}
\newcommand{\cB}{\mathcal{B}}
\DeclareMathOperator{\Aut}{{Aut}}
 \DeclareMathOperator{\Ker}{Ker}
\DeclareMathOperator{\Pic}{Pic}
 \DeclareMathOperator{\Nm}{{Nm}}
  \DeclareMathOperator{\Prym}{{Pr}}
 \DeclareMathOperator{\pr}{{Pr}}
 \DeclareMathOperator{\Ima}{{Im}}
\DeclareMathOperator{\Spec}{Spec}
 \DeclareMathOperator{\Stab}{Stab}
 \DeclareMathOperator{\im}{Im}
\DeclareMathOperator{\End}{{End}}
\begin{document}

\title[ ]{ The Prym map of degree-7 cyclic coverings}
\author{ Herbert Lange and  Angela Ortega}
\address{H. Lange \\ Department Mathematik der Universit\"at Erlangen \\ Germany}
\email{lange@math.fau.de}
              
\address{A. Ortega \\ Institut f\" ur Mathematik, Humboldt Universit\"at zu Berlin \\ Germany}
\email{ortega@math.hu-berlin.de}

\thanks{The second author was supported by Deutsche Forschungsgemeinschaft, SFB 647.}
\subjclass{14H40, 14H30}
\keywords{Prym variety, Prym map}%

\begin{abstract} 
We study the Prym map for degree-7 \'etale cyclic coverings over a curve of genus 2. We extend this map 
to a proper map on a partial compactification of the moduli space and prove that the Prym map is generically finite
onto its image of degree 10. 
\end{abstract}

\maketitle

\section{Introduction}

Consider an \'etale  finite covering $f: Y \ra X$ of degree $p$ of a smooth complex projective curve 
$X$ of genus $g \geq 2$.
Let  $\Nm_f : JY \ra JX$ denote the norm map of the corresponding Jacobians.
One can associate to the
covering $f$ its Prym variety 
$$
P(f):= (\Ker \Nm_f)^0,
$$ 
the connected component containing 0 of the kernel of the  
norm map, which is an abelian variety of dimension 
$$
\dim P(f) = g(Y) - g(X) = (p-1)(g-1).
$$ 
The variety $P(f)$ carries a 
natural  polarization namely,  the restriction of the principal polarization $\Theta_Y$ of $JY$ to $P(f)$.
Let $D$ denote the type of this polarization.
If moreover $f: Y \ra X$ is a cyclic covering of degree $p$, then the group action induces an action on
the Prym variety. Let $\cB_D$ denote the moduli space of
abelian varieties of dimension $(p-1)(g-1)$ with a polarization of  type $D$ and an 
automorphism of order $p$ compatible with the polarization. If $\cR_{g,p}$ denotes the moduli space of  \'etale cyclic coverings of 
degree $p$ of curves of genus $g$, we get a map
$$
\Prym_{g,p}: \cR_{g,p} \ra \cB_D
$$
associating to every covering in $\cR_{g,p}$ its Prym variety, called the {\it Prym map}. 

Particularly interesting are the cases where $\dim \cR_{g,p} = \dim \cB_D$. For instance, 
for  $p=2$  this occurs only if  $g=6$. In this case the Prym map  $\pr_{6,2} : \cR_6 \ra \cA_5$
is generically finite of degree 27 (see \cite{ds}) and the fibers carry the structure of the 27 lines on 
a smooth cubic surface.  For  $(g,p) = (4,3)$, it is also known that $\pr_{4,3}$ is generically finite of degree 16 
onto its 9-dimensional  image $\cB_D$ (see \cite{f}) .

In this paper we investigate the case $(g,p) = (2,7)$,  where $\dim \cR_{g,p} = \dim \cB_D$. 
The main result of the paper is the following theorem. Let $G$ be the cyclic group of order 7.

\begin{thm} \label{main-theorem}
For any \'etale $G$-cover $f: \widetilde C \ra C$ of a curve $C$ of genus $2$, 
the Prym variety $\Prym(f)$ is 
an abelian variety of dimension $6$ with a polarization of type $D=(1,1,1,1,1,7)$ and a $G$-action. The Prym map
$$
\Prym_{2,7}: \cR_{2,7} \ra \cB_D
$$
is generically finite of degree 10.
\end{thm}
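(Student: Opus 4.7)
The plan is to argue in three stages: first the structural properties of $\Prym(f)$, then generic finiteness, and finally the degree count, which is the main obstacle.

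For the structural properties, Riemann--Hurwitz gives $g(\tC) = 7(g(C)-1)+1 = 8$, so $\dim \Prym(f) = 6$. The pullback $f^{*}\colon JC \to J\tC$ is an isogeny onto its image with kernel isomorphic to the character group of $G$, that is, $\ZZ/7\ZZ$, and $(f^{*})^{*}\Theta_{\tC} = 7\,\Theta_{C}$. A standard calculation with complementary abelian subvarieties applied to the pair $f^{*}JC$, $\Prym(f)$ inside $J\tC$ then forces the induced polarization on $\Prym(f)$ to have type $(1,1,1,1,1,7)$. The $G$-action on $\tC$ descends to $J\tC$ and fixes $f^{*}JC$ pointwise, so it preserves $\Prym(f)$ together with its polarization.

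For generic finiteness, both $\cR_{2,7}$ and $\cB_{D}$ have dimension $3$: the former because $\cR_{2,7}\to\cM_{2}$ is a finite \'etale covering, and the latter because the $G$-eigenspace decomposition of the tangent space (each non-trivial character of $G$ appearing once, as follows from the structure of $H^{0}(\tC,\omega_{\tC})$) exhibits $\cB_{D}$ as a Shimura variety of dimension $3$. It then suffices to verify surjectivity of the codifferential of $\Prym_{2,7}$ at a general $f$; this reduces to a multiplication-map statement on the non-trivial $G$-eigenspaces of $H^{0}(\tC,\omega_{\tC})$ and can be checked by a direct cohomological computation on $\tC$.

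The degree computation is the heart of the argument. My plan is first to extend $\Prym_{2,7}$ to a proper map $\overline{\Prym}_{2,7}\colon\overline{\cR}_{2,7}\to\overline{\cB}_{D}$ on a suitable partial compactification parametrizing admissible $G$-covers of stable genus-$2$ curves in the sense of Beauville, and then to locate a boundary point where the fiber can be enumerated by hand. A natural choice is to degenerate $C$ to a union of two elliptic curves meeting at a node: admissible $G$-covers then decompose into $7$-torsion data on each elliptic component together with compatible gluing identifications over the node, and their limit Prym varieties admit an explicit description. Matching this enumeration with a generic boundary fiber of $\overline{\Prym}_{2,7}$ should produce the count $10$. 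The main technical obstacles I expect are (i) establishing properness of the extended map, which requires a careful analysis of how the polarized Prym degenerates across the boundary of $\cR_{2,7}$, and (ii) ruling out spurious identifications in the boundary fiber arising from automorphisms of the degenerate base curve, from automorphisms of its elliptic components, or from the $G$-action itself.
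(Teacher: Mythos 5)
Your first two stages track the paper closely: the dimension and polarization type are obtained exactly as you describe, $\dim\cB_D=3$ is proved via Shimura's theory of the eigenspace decomposition, and generic finiteness is reduced to the surjectivity of the multiplication map $\bigoplus_{i=1}^{3} H^0(\omega_C\otimes\eta^{i})\otimes H^0(\omega_C\otimes\eta^{7-i})\to H^0(\omega_C^2)$. One caveat there: the paper does not verify this "by a direct cohomological computation on $\tC$" for a general smooth curve; it degenerates to a union of two rational curves meeting in three points and checks nondegeneracy of an explicit $3\times 3$ matrix in the gluing constants $c_1,c_2$. You should expect to need some such degeneration or explicit model, since the statement is not formal.

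The genuine gap is in your degree count. You propose to find a boundary point whose fiber "can be enumerated by hand" and to match that enumeration against the count $10$. But over the natural special points — the abelian varieties $X(E)=\ker(m\colon E^7\to E)$, which are exactly the images of the degenerations you describe — the fiber of the extended Prym map is \emph{positive-dimensional}: it is the union of a component isomorphic to (a finite cover of) $E$ itself, parametrizing covers of irreducible one-nodal bases $E/(p\sim q)$, and a component isomorphic to the modular curve $X_0(7)\cong\PP^1$, parametrizing covers of reducible bases $E_1\cup_p E$, the two components meeting in a single point. So there is nothing finite to enumerate, and no obvious boundary point with a finite fiber is available. The paper instead blows up $\cB_D$ along the curve $\cE$ of all $X(E)$, proves that the codifferential of the Prym map surjects onto the conormal bundles of the two boundary divisors $\cS_1,\cS_2$ (this requires the explicit bases of Prym differentials on each degenerate cover and a rank computation showing $\ker\cP^*$ is one-dimensional), and then invokes the Donagi--Smith lemma to identify the local degree along each divisor with the degree of the induced map on exceptional divisors. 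These turn out to be the degree-$2$ map $E\to\PP^1$ given by projection from the origin under the $|3\cdot 0|$ embedding, and the degree-$8$ forgetful map $X_0(7)\to\PP^1$, $(E_1,\langle\eta\rangle)\mapsto E_1$; the total degree is $2+8=10$. Without this infinitesimal analysis along the positive-dimensional fiber, the number $10$ cannot be extracted from the boundary, so this step of your plan as written would not go through.
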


The paper is organized as follows. First we compute in Section 2 the dimension of the moduli space $\cB_D$
when $(g,p)=(2,7)$. We also show that the case $(g,p)=(2,6)$,
mentioned in \cite{f} as one where the dimension of $\cR_{2,7}$ equals the dimension of the image of the Prym map,
does not have this property. In Sections 3-5,
we extend the Prym map to a partial compactification  of admissible coverings $\widetilde{\cR}_{2,7}$
such that $\Prym_{2,7}: \widetilde{\cR}_{2,7} \ra \cB_D$ is  a proper map. We prove the generic finiteness of the Prym map 
in Section 6 by specializing to a curve in the boundary. 
In order to compute the degree of the Prym map we describe in Section 7 a complete fiber over a special abelian sixfold
with polarization type $(1,1,1,1,1,7)$,  and in Section 8 we give a basis for the Prym differentials for the different types
of admissible coverings appearing in the special fiber. Finally, in Section 9 we determine the degree of the Prym map
by computing the local degrees along the special fiber.

We would like to thank E. Esteves for his useful suggestions for the proof of Theorem \ref{thm5.2}.
The second author is thankful to G. Farkas for stimulating discussions.

\section{Dimension of the moduli space $\cB_D$  }

As in the introduction, let $\cR_{2,7}$ denote the moduli space of non-trivial cyclic \'etale coverings 
$f: \tC \ra C$ of degree 7 of curves of genus 2. The Hurwitz formula gives $g(\tC) = 8$. 
Hence the Prym variety $P = P(f)$ is of dimension 6 and the canonical polarization of the Jacobian 
$J\tC$ induces a polarization of type $(1,1,1,1,1,7)$ on $P$. Let $\sigma$ denote an automorphism of 
$J\tC$ generating the group of automorphisms of $\tC/C$. It induces an automorphism of $P$,
also of order 7, which is compatible with the polarization.
The Prym map 
$
\Pr_{2,7} : \cR_{2,7} \ra \cB_D
$ 
is the morphism defined by $f \mapsto P(f)$. Here $\cB_D$ is the moduli space of abelian varieties of dimension 6 with a polarization 
of type $(1,1,1,1,1,7)$ and an automorphism of order 7 compatible with the polarization. 
The main result of this section is the following proposition.

\begin{prop} \label{p2.1}
$$
\dim \cB_D = \dim \cR_{2,7} =3.
$$
\end{prop}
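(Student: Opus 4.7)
The plan is to handle the two equalities separately. For $\dim \cR_{2,7}$, the forgetful morphism $\cR_{2,7} \to \cM_2$ sending $f \colon \tC \to C$ to $C$ is finite étale: the fibre over $[C]$ parametrises nontrivial cyclic subgroups of order $7$ inside $JC[7]\cong(\ZZ/7)^4$, a set of $(7^4-1)/6 = 400$ elements. Since $\dim\cM_2 = 3$, this immediately gives $\dim \cR_{2,7} = 3$.

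For $\dim \cB_D$, I would compute the Zariski tangent space of $\cB_D$ at a point $(P(f), L, \sigma)$ arising from the Prym construction. By standard deformation theory of polarized abelian varieties, the tangent space of $\cA_D$ at $(P,L)$ is $\mathrm{Sym}^2 T_0 P$, and requiring deformations to commute with the automorphism $\sigma$ cuts out the invariant subspace
\[
T_{(P,L,\sigma)} \cB_D \;=\; (\mathrm{Sym}^2 T_0 P)^{\sigma}.
\]
The deformation functor is unobstructed ($\cA_D$ is smooth, and the fixed locus of a finite group acting on a smooth Deligne--Mumford stack is smooth), so this tangent dimension equals the local dimension of $\cB_D$ at the point.

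The remaining task is linear algebra. By the Chevalley--Weil formula applied to the étale cyclic cover of prime degree $7$, the representation of $G=\langle\sigma\rangle$ on $H^0(\tC,\omega_{\tC})$ contains the trivial character with multiplicity $g(C)=2$ and each of the six nontrivial characters with multiplicity $g(C)-1=1$. Dualizing and dropping the invariants (which correspond to the pullback of $H^0(C,\omega_C)^*$), $T_0P$ splits as a direct sum of six one-dimensional $\sigma$-eigenspaces with eigenvalues $\zeta,\zeta^2,\dots,\zeta^6$, where $\zeta$ is a primitive seventh root of unity. Choosing eigenvectors $e_1,\dots,e_6$, a basis of $\mathrm{Sym}^2 T_0 P$ is $\{e_ie_j : 1\le i\le j\le 6\}$, and $\sigma$ acts on $e_ie_j$ by $\zeta^{i+j}$. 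The invariants correspond exactly to pairs with $i+j\equiv 0\pmod 7$, namely $(1,6),(2,5),(3,4)$, yielding dimension $3$. Combined with the inclusion of the $3$-dimensional Prym image in $\cB_D$, this forces $\dim\cB_D=3$.

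The delicate point is the identification of the Chevalley--Weil character decomposition of $H^0(\tC,\omega_{\tC})$ together with the correct description of the equivariant deformation tangent space as the $\sigma$-invariants in $\mathrm{Sym}^2 T_0 P$; once both are in hand, the proposition reduces to the elementary count of solutions of $i+j\equiv 0 \pmod 7$ with $1 \le i \le j \le 6$.
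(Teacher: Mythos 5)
Your proposal is correct, but it reaches $\dim\cB_D=3$ by a genuinely different route than the paper. The paper never computes the equivariant tangent space directly: it invokes Shimura's theory of moduli of abelian varieties with endomorphism structure in $\QQ(\rho_7)$ to get the a priori bound $\dim\cB_D=\sum_{\nu=1}^{3}r_\nu s_\nu\le 3$ (since $r_\nu+s_\nu=2$ at each of the three real places), and then obtains the matching lower bound $\dim\cB_D\ge\dim\cR_{2,7}=3$ from the generic finiteness of $\pr_{2,7}$, which is only established later (Theorem \ref{thm5.2}, via degeneration to a boundary cover) --- so the paper's proof carries a forward reference. You instead compute $T_{(P,L,\sigma)}\cB_D=(S^2\,T_0P)^{\sigma}$ at a Prym point, using the decomposition $H^0(\tC,\omega_{\tC})^-=\bigoplus_{i=1}^6H^0(C,\omega_C\otimes\eta^i)$ with each summand one-dimensional; this amounts to checking by hand that the Shimura signature is $(r_\nu,s_\nu)=(1,1)$ at every place, a fact the paper only deduces indirectly. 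What your route buys is self-containedness (no appeal to the later finiteness theorem) and it effectively establishes Lemma \ref{lem5.1}(i) directly, whereas the paper proves that lemma by citing $\dim\cB_D=3$; what it costs is having to justify unobstructedness/smoothness of $\cB_D$ at the point, which Shimura's description (as a quotient of a Hermitian symmetric domain) gives for free. Two minor remarks: like the paper, your computation only controls the component of $\cB_D$ through the Prym locus; and your final sentence invoking ``the $3$-dimensional Prym image'' quietly presupposes generic finiteness, which is not yet available --- but your argument does not need it, since smoothness plus the tangent-space count already fixes the local dimension at $3$.
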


\begin{proof}
Clearly $\dim \cR_{2,7} = \dim \cM_2 = 3$. So we have to show that also $\dim \cB_D = 3$.
For this we use Shimura's theory of abelian varieties with endomorphism structure (see \cite{sh} or
\cite[Chapter 9]{bl}).

Let $K = \QQ(\rho_7)$ denote the cyclotomic field generated by a primitive 7-th root of unity 
$\rho_7$. Clearly $\cB_D$ coincides 
with one of Shimura's moduli spaces of polarized abelian varieties with endomorphism structure in 
$K$. The field $K$ is a totally complex quadratic extension of a totally real number field of degree 
$e_0 = 3$. Denote 
$$
m := \frac{\dim P}{e_0} = 2.
$$
The polarization of $P$ depends on the lattice of $P$ and a skew-hermitian matrix $T \in 
M_m(\QQ(\rho_7))$. For each of the $e_0$  real embeddings of the totally real subfield of 
$\QQ(\rho_7)$ consider an extension $\QQ(\rho_7) \hra \CC$ and let $(r_{\nu},s_{\nu})$ be the 
signature of $T$ considered as a matrix in the extension $\CC$. The signature of $T$ is defined to 
be the $e_0$-tuple $((r_1,s_1).\dots, (r_{e_0},s_{s_0}))$
with 
$$
r_{\nu} + s_{\nu} = m = 2.
$$ 
for all $i$. Then, according to 
\cite[p. 162]{sh} or \cite[p. 266, lines 6-8]{bl} we have 
\begin{equation}\label{eq1.5}
\dim \cB_D = \sum_{\nu=1}^{e_0} r_{\nu} s_{\nu} \leq 3
\end{equation}
with equality if and only if  $r_{\nu}= s_{\nu} = 1$ for all $\nu$.

On the other hand, in Section 6 we will see that the map $\Pr_{2,7}$ is generically injective. This implies that 
$$
\dim \cB_D \geq \dim \cR_{2,7} = 3
$$
which completes the proof of the proposition.
\end{proof}

\begin{rem}
According to \cite{o} we know that $P$ is isogenous to the product of a Jacobian of dimension 3 
with itself. Then $\End_{\QQ}(P)$ is not a simple algebra. Hence, 
if one knew that  $\Pr_{2,7}$ is dominant onto the component $\cB_D$, then 
\cite[Proposition 9.9.1]{bl} implies that $r_{\nu} = s_{\nu} = 1$ for $\nu = 1,2,3$ which also gives 
$\dim \cB_D = 3$. 
\end{rem}

\begin{rem} In \cite{f} it is claimed that also the Prym map $\Pr_{2,6}: \cR_{2,6} \ra \cB_D$
satisfies $\dim \cB_D = \dim \cR_{2,6} = 3$.
However, we claim that the dimension of $\cB_D$ in this case cannot be $3$.

For the proof note that the cyclotomic field of the 6-th roots of unity $\QQ(\rho_6)$ is the imaginary quadratic field 
$\QQ(\sqrt{-3})$. So with the notation of the proof of Proposition \ref{p2.1} we have in this case
$$
e_0 = 1 \; \mbox{and} \; m = \frac{\dim P}{e_0} = 5
$$
and we have that
$$
\dim \cB_D = r_1s_1 \;\mbox{with}\; r_1 + s_1 = 5.
$$
So for $(r_1,s_1)$ there are the following possibilities (up to exchanging $r_1$ and $s_1$ which does not modify $\dim \cB_D$):
$$
(r_1,s_1) = (5,0), (4,1) \; \mbox{or} \;  (3,2)
$$
giving respectively
$$
\dim \cB_D  = 0, 4 \; \mbox{or} \; 6
$$
which in any case is different from 3.
\end{rem}

\section{The condition (*)}

In this section we study the Prym map for coverings of degree 7 between stable curves.
Let $G = \ZZ/7\ZZ$ be the the cyclic group of order 7 with generator $\sigma$ and $f:\widetilde C \ra 
C$ be a $G$-cover of a connected stable curve $C$ of arithmetic genus $g$. We fix in the sequel a
primitive 7-th root of the unity $\rho$. 
In this section we assume the following condition for the covering $f$.

$$
(*) \left\{ \begin{array}{l}
               \mbox{The fixed points of} \; \sigma \; \mbox{are exactly the nodes of} \; \widetilde C
\; \\
\mbox{and at each node 
               one local parameter is multiplied by}\\
                \rho^{\delta} \; \mbox{and the other
               by} \; \rho^{-\delta} \; \mbox{for some}  \; \delta, \; 1 \leq \delta \leq 3.
       \end{array}  \right.   
$$
As in \cite{b} we have $f^*\omega_{C} \simeq \omega_{\widetilde C}$ which implies
$$
p_a(\widetilde C) = 7g -6.
$$
Let $\widetilde N$ respectively $N$ be the normalization of $\widetilde C$, respectively $C$, and 
$\widetilde f: \widetilde N \ra N$ the induced map. At each node $s$ of $\widetilde C$ we make the usual  identification 
$$
\cK_s^*/\cO_s^* \simeq \CC^* \times \ZZ \times \ZZ.
$$
Then the action of $\sigma$ on $\cK_s^*/\cO_s^*$ is:
$$
\sigma^*((z,m,n)_s) = (\rho^{\delta(m+n)} z,m,n)
$$
for some $\delta, \;1 \leq \delta \leq 3$. Here we label the branches at the node $s$ such that a local 
parameter at 
the first branch (corresponding to $m$) is multiplied by $\rho^{\delta}$ with $1 \leq \delta \leq 3$. Then we have
$$
f_*((z,m,n)_s) = (\prod_{k=0}^6 (\sigma^{k})^*z,m,n)_{f(s)} = (z^7,m,n)_{f(s)}.
$$
We define the multidegree of a line bundle $L$ on $\widetilde C$ by
$$
\deg L = (d_1,\dots, d_v)
$$
where $v$ is the number of components of $\widetilde C$ and 
$d_i$ is the degree of $L$ on the $i$-th component of $\widetilde C$. 

\begin{lem} \label{lem2.1}
Let $L \in \Pic \widetilde C$ with $\Nm L \simeq \cO_{\widetilde C}$. Then 
$$
L \simeq M \otimes \sigma^*M^{-1}
$$
for some $M \in \Pic \widetilde C$. Moreover, $M$ can be chosen of multidegree $(k,0,\dots,0)$ with 
$0 \leq k \leq 6$.
\end{lem}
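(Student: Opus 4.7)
My plan is to establish the lemma in two independent steps: first produce some $M \in \Pic \tC$ with $L \cong M \otimes \sigma^{*}M^{-1}$, then normalise its multidegree.

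For the first step, the starting point is the algebraic identity
$$(1-\sigma)\Bigl(\sum_{i=0}^{5}(6-i)\sigma^{i}\Bigr) \;=\; 7 \;-\; \Nm \qquad \text{in } \ZZ[G],$$
where $\Nm = \sum_{i=0}^{6}\sigma^{i}$; direct expansion verifies this. Setting $M_{0} := \bigotimes_{i=0}^{5}\bigl((\sigma^{i})^{*}L\bigr)^{\otimes(6-i)}$ and using the hypothesis $\Nm L \cong \cO_{\tC}$, this identity yields $M_{0} \otimes \sigma^{*}M_{0}^{-1} \cong L^{\otimes 7}$. To replace $L^{\otimes 7}$ by $L$, I would choose a 7-th root $L' \in \Pic \tC$ of $L$; such a root exists because the identity component of the (generalised) Jacobian of $\tC$ is a divisible group, being semiabelian. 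A priori $\Nm L'$ is only 7-torsion in $\Pic C$, but surjectivity of $\Nm \colon \Pic \tC [7] \to \Pic C [7]$ (which follows from the surjectivity of $\Nm$ on identity components combined with divisibility of $\ker \Nm$) allows one to modify $L'$ by a 7-torsion class so that $\Nm L' \cong \cO_{C}$. Applying the same construction to this corrected $L'$ produces an $M$ with $M \otimes \sigma^{*}M^{-1} \cong L$ on the nose.

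For the multidegree normalisation, the essential freedom is that $M \otimes f^{*}N$ gives the same $L$ for any $N \in \Pic C$, since $\sigma^{*}f^{*}N \cong f^{*}N$. The multidegree of $f^{*}N$ is constant along $G$-orbits of components of $\tC$ and is determined by the local degrees of $f$, so such pullbacks shift the multidegree of $M$ by a specific sublattice of $\ZZ^{v}$. Combining these shifts with reindexing by suitable powers of $\sigma$ (which permute the coordinates of the multidegree within an orbit), I would show that the multidegree of $M$ can be brought to a representative concentrated on a single component, and that the resulting single non-zero entry can be further reduced modulo $7$ to yield $(k,0,\dots,0)$ with $0 \leq k \leq 6$.

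The main difficulty lies in the passage from $L^{\otimes 7}$ to $L$: the 7-torsion ambiguity of the 7-th root must actually be killable by the norm-correction step, not merely up to further torsion. Under condition (*), the $\sigma$-action on the toric $(\CC^{*})^{s}$-part of the generalised Jacobian at the $s$ nodes is by the non-trivial 7-th roots of unity $\rho^{\pm \delta}$, so $1 - \sigma$ acts invertibly on this factor; the entire obstruction therefore lives in the abelian quotient, where the divisibility and norm-surjectivity arguments above apply.
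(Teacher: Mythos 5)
Your reduction $L^{\otimes 7}\cong M_0\otimes\sigma^*M_0^{-1}$ via the identity $(1-\sigma)\sum_{i=0}^{5}(6-i)\sigma^i=7-\Nm$ is correct, but note that it only shows the quotient $\ker\Nm_f/\im(1-\sigma^*)$ is killed by $7$ --- which is automatic for any $\ZZ[G]$-module --- so the entire content of the lemma has been pushed into the passage from $L^{\otimes 7}$ to $L$, and that step fails. The claimed surjectivity of $\Nm\colon\Pic\tC[7]\to\Pic C[7]$ is false: $\ker\Nm$ is not divisible (the paper deduces from this very lemma that $\ker\Nm\cong P\times\ZZ/7\ZZ$, which is not $7$-divisible), and the snake lemma applied to multiplication by $7$ on $0\to\ker\Nm\to\Pic^{\underline{0}}\tC\to\Pic^{\underline{0}}C\to 0$ gives a cokernel $\ker\Nm/7\ker\Nm\cong\ZZ/7$ on $7$-torsion whenever the component group of $\ker\Nm$ sits inside $\Pic^{\underline{0}}\tC$ (e.g.\ $\tC$ irreducible, in particular the smooth case). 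Worse, the obstruction class $[\Nm L']\in\Pic C[7]/\Nm(\Pic\tC[7])$ is precisely the class of $L$ in $\ker\Nm_f/(\ker\Nm_f)^0\cong\ZZ/7$: writing $L=v+\Lambda$ transcendentally, one gets $\Nm L'=\tfrac17\Nm v+\Gamma$, while $\Nm H_1(\tC,\ZZ)$ is the index-$7$ sublattice of $H_1(C,\ZZ)$ cut out by $\eta$, so the invariant is $[\Nm v]\in\Gamma/\Nm\Lambda\cong\ZZ/7$. Hence your correction step succeeds exactly when $L$ lies in the identity component $P$; for the other six components of $\ker\Nm_f$ --- which the lemma must cover, and which are the ones with $k\neq 0$ in the ``moreover'' --- no $7$-th root with trivial norm exists. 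There is also a smaller, fixable gap earlier: $L$ need not have multidegree zero (only its sums over $\sigma$-orbits of components vanish), and the component group $\ZZ^v$ of $\Pic\tC$ is not divisible, so a $7$-th root of $L$ need not exist until you first twist by a suitable $M_1\otimes\sigma^*M_1^{-1}$ to kill the multidegree.

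The paper's proof is structurally different and avoids all of this: following Mumford, it invokes Tsen's theorem to represent $L$ by a divisor $D$ with $f_*D=0$, and then splits $1-\sigma^*$ explicitly on the generators $x-\sigma^*x$ and on the local contributions $(z,m,n)_s$ at the nodes, using the condition $(*)$ on the weights $\rho^{\pm\delta}$. That Hilbert 90/Tsen input is the non-formal ingredient; the failure of norm-surjectivity on $7$-torsion shows it cannot be recovered from the group-ring identity plus divisibility. To salvage your route you would need an independent proof that $\ker\Nm_f$ has exactly $7$ components together with an explicit generator of the component group of the form $M\otimes\sigma^*M^{-1}$, at which point you would essentially have reproved the lemma.
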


\begin{proof}
As in \cite[Lemma 1]{m} using Tsen's theorem, there is a divisor $D$ such that 
$L \simeq \cO_{\widetilde C}(D)$ and $f_*D = 0$. Writing $D = \sum_{x \in \widetilde C_{reg}} x + 
\sum_{s \in \widetilde C_{sing}} (z_s,m_s,n_s)$, we see that, at singular points 
$s \in \widetilde C$,  $D$ is a linear combination of divisors 
$x - \sigma^*x$ for $x \in \widetilde C_{reg}$ and $(\rho,0,0)_s$  (note that it suffices to show that $(\rho,0,0)$ is in the 
image of $1 - \sigma^*$ because $(\rho,0,0) + (\rho,0,0) = (\rho^2,0,0)$). If at $s$  $\delta$ is as above,
choose an integer $i$ such that  $-i\delta \equiv 1 \mod 7$. Then
$$
(1,i,0) - \sigma^*(1,i,0) = (\rho^{-\delta i},0,0) = (\rho,0,0).
$$
Hence $D = E - \sigma^*E$ for some divisor $E$ on $\widetilde C$.
Moreover, 
$$
(1,1,-1)_s - \sigma^*(1,1,-1)_s = (1,0,0)_s
$$ 
which altogether implies that 
$L \simeq M \otimes \sigma^*M^{-1}$,  where $M$ can be chosen of multidegree as stated.
\end{proof}

Let $P$ denote the Prym variety of $f: \widetilde C \ra C$, i.e. the connected component of 0
of the kernel of norm map $\Nm: J\widetilde C \ra JC$. By definition it is a connected commutative
algebraic group. Lemma \ref{lem2.1} implies that $P$ is the variety of line bundles in $\ker \Nm$ of the form 
$M \otimes \sigma^*M^{-1}$ with $M$ of multidegree $(0,\dots,0)$.

\begin{prop} \label{prop2.2}
Suppose $p_a(C) = g$. Then
$P$ is an abelian variety of dimension $6g-6$.
\end{prop}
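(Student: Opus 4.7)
My plan is to combine a dimension count based on the norm map with a tangent-space argument ruling out a toric part of $P$.

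For the dimension, I would argue that, since $\Nm \circ f^* = [7]$ is surjective on the semi-abelian variety $JC$, the norm map $\Nm : J\widetilde{C} \to JC$ is itself surjective, and therefore
$$ \dim P \;=\; \dim(\ker \Nm)^0 \;=\; p_a(\widetilde{C}) - p_a(C) \;=\; (7g-6) - g \;=\; 6g - 6. $$

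To see that $P$ is complete, I would pass to tangent spaces at $0$. The space $T_0 J\widetilde{C} = H^1(\widetilde{C}, \cO_{\widetilde{C}})$ decomposes under the $G$-action as $\bigoplus_\chi V_\chi$, with $V_1 = H^1(C, \cO_C)$ the $\sigma$-invariant part; the differential $d\Nm$ is (up to the scalar $7$) the projection onto $V_1$, so $T_0 P = \bigoplus_{\chi \neq 1} V_\chi$. From the canonical exact sequence
$$ 0 \to T_{\widetilde{C}} \to J\widetilde{C} \to J\widetilde{N} \to 0, $$
with $\widetilde{N}$ the normalization of $\widetilde{C}$, one identifies $T_0 T_{\widetilde{C}}$ with $H^1(\Gamma_{\widetilde{C}}, \CC)$, the first cohomology of the dual graph of $\widetilde{C}$.

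The crux is to show that $\sigma$ acts trivially on $H^1(\Gamma_{\widetilde{C}}, \CC)$. Condition $(*)$ says that every node of $\widetilde{C}$ is $\sigma$-fixed with both branches preserved (only the local parameters are scaled), so every edge of $\Gamma_{\widetilde{C}}$ is fixed by $\sigma$. Since $\widetilde{C}$ is connected, every component of $\widetilde{N}$ is incident to at least one node and is therefore also $\sigma$-invariant. Hence $\sigma$ fixes $\Gamma_{\widetilde{C}}$ pointwise, $T_0 T_{\widetilde{C}}$ lies inside $V_1$, and $T_0 T_{\widetilde{C}} \cap T_0 P = 0$. This forces $P \cap T_{\widetilde{C}}$ to be finite, so $P$ has trivial toric part and is an abelian variety of dimension $6g-6$.

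The main obstacle is this last step: lifting the purely local information in $(*)$ to the global statement that no vertex of $\Gamma_{\widetilde{C}}$ is permuted. This uses in an essential way both the connectedness of $\widetilde{C}$ and the branch-preservation (as opposed to branch-swap) aspect of $(*)$.
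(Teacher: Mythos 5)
Your proof is correct, but it establishes the completeness of $P$ by a genuinely different mechanism than the paper. The paper (following Beauville and Faber) argues globally with the normalization diagram \eqref{eq2.1}: it shows that the norm restricted to the toric part $\widetilde T$ is surjective with finite kernel $\widetilde T_7$ — using that $f^*$ is injective on $T$, that $\Nm\circ f^*=7$, and (this is where $(*)$ enters) that $\dim\widetilde T=\dim T$ because the nodes of $\widetilde C$ biject with those of $C$ — and then invokes Lemma \ref{lem2.1} to produce the exact sequence \eqref{eq2.2}, exhibiting $P$ as isogenous, with kernel of order $7^{s-1}$, to an abelian subvariety $R$ of $J\widetilde N$. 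You instead argue infinitesimally: $T_0P$ is the sum of the nontrivial isotypic pieces of $H^1(\widetilde C,\cO_{\widetilde C})$, while $T_0\widetilde T\simeq H^1(\Gamma_{\widetilde C},\CC)$ lies in the invariant piece because $(*)$ forces $\sigma$ to fix every node, preserve both branches, and hence fix the dual graph pointwise; so $P\cap\widetilde T$ is finite and the maximal subtorus of $P$ is trivial. Both proofs ultimately consume the same geometric content of $(*)$ (all nodes of index $7$ with branches preserved, not swapped), packaged once as $\dim\widetilde T=\dim T$ and once as triviality of the $\sigma$-action on $H^1(\Gamma_{\widetilde C},\CC)$. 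Your route is more self-contained — it bypasses Lemma \ref{lem2.1} entirely — and it delivers for free the eigenspace decomposition of $T_0P$ that the paper needs anyway in Section 5; the paper's route yields in addition the explicit isogeny from $P$ onto the Prym of the normalized covering together with the order of its kernel, which is the structural information reused in the proof of Lemma \ref{L6.1}. One small point worth making explicit: the assertion that every component of $\widetilde N$ meets a node silently excludes the case where $\widetilde C$ is smooth and irreducible, but there $\widetilde T=0$ and the conclusion is immediate.
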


\begin{proof} (As in \cite{b} and \cite{f}).
Consider the following diagram of commutative algebraic groups:
\begin{equation} \label{eq2.1}
\xymatrix{
	      0 \ar[r]  & \widetilde T \ar[r] \ar[d]^{\Nm} & J\widetilde C  \ar[r] \ar[d]^{\Nm} &J\widetilde N  \ar[r] \ar[d]^{\Nm} 
& 0\\
             0 \ar[r] &  T  \ar[r] &  JC  \ar[r] & JN \ar[r] & 0 
    }
\end{equation} 
where the vertical arrows are the norm maps and $T$ and $\widetilde T$ are the groups of  
classes of divisors of multidegree $(0,\dots,0)$ with singular support. Since $f^*$ is injective on $T$ 
and $\Nm \circ f^* = 7$, the norm on $\widetilde T$ is surjective and 
$$
\ker Nm_{|\widetilde T}  \simeq \widetilde T_7= \{\mbox{points of order $7$ in}\; \widetilde T\}.
$$
On the other hand, Lemma \ref{lem2.1} implies that 
$$
\ker (\Nm: J\widetilde C \ra JC) \simeq P \times \ZZ/7 \ZZ.
$$ 
Hence one obtains an exact sequence
\begin{equation} \label{eq2.2}
0 \ra \widetilde T_7 \ra P \times \ZZ/7\ZZ \ra R \ra 0.
\end{equation}
Suppose first that $C$ and hence $\widetilde C$ are nonsingular. Then $\widetilde C = \widetilde N$ 
and hence $\widetilde T =0$. Since $\ker (J\widetilde N \ra JN)$ has 7 components, $P$ is an abelian 
variety. 

Suppose that $C$ and thus also $\widetilde C$ have $s>0$ singular points. Then 
$\dim \widetilde T = \dim T = s$. Then $R$ is an abelian variety, since $\widetilde f$ is ramified. 
We get a surjective homomorphism $P \ra R$ with kernel consisting of $7^{s-1}$ elements. Hence 
also $P$ is an abelian variety. Moreover,
$$
\dim P = \dim R = \dim J\widetilde C - \dim JC. 
$$
Now, if $C$ has $s$ nodes and $N$ has $t$ connected components, then also $\widetilde C$ has $s$ 
nodes and $\widetilde N$ has $t$ connected components. This implies 
$$
\dim J\widetilde C - \dim JC= p_a(\widetilde C) - p_a(C) = 6g-6.
$$
\end{proof}

Let $\widetilde \Theta$ denote the canonical polarization of the generalized Jacobian $J\widetilde C$
(see \cite{b}). It restricts to a polarization $\Sigma$ on the abelian subvariety $P$. 
We denote the isogeny $P \ra \widehat P$ associated to $\Sigma$ by the same letter.

\begin{prop} \label{prop2.3}
The polarization $\Sigma$ on $P$ is of type $D:=(1,\dots,1,7,\dots,7)$ where $7$ occurs $g-1$ times 
and $1$ occurs $5g-5$ times.
\end{prop}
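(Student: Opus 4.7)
The plan is to determine the polarization type by identifying $\ker\Sigma$, both as to its order and as an abstract abelian group. The natural object to exploit is the complementary abelian subvariety of $P$ inside $J\widetilde C$ with respect to $\widetilde\Theta$: under the canonical principal polarizations, the pullback $f^*: JC \to J\widetilde C$ is dual to the norm $\Nm: J\widetilde C \to JC$, and one has the identity $\Nm\circ f^* = [7]$. Consequently $f^*JC$ is the complementary subvariety of $P$ in $J\widetilde C$, and by the standard formula for complementary abelian subvarieties (cf.~\cite{bl}, Ch.~5) the kernel of the induced polarization $\Sigma$ equals $P \cap f^*JC$.

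The order of this intersection is then computed via pullback of polarizations: $(f^*)^*\widetilde\Theta$ has associated isogeny $\widehat{f^*} \circ \phi_{\widetilde\Theta} \circ f^* = \Nm \circ f^* = [7]$, so $(f^*)^*\widetilde\Theta$ is algebraically equivalent to $7\Theta_C$ on $JC$, a polarization of total degree $7^{2g}$. Factoring $f^*$ as $JC \twoheadrightarrow f^*JC \hookrightarrow J\widetilde C$ with the first arrow of degree $7$ (its kernel being the cyclic subgroup $\langle\eta\rangle$ of order $7$ defining the cover), the restriction of $\widetilde\Theta$ to $f^*JC$ has degree $7^{2g}/7^2 = 7^{2g-2}$, and hence $|\ker\Sigma| = 7^{2g-2}$. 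Next, any element of $P \cap f^*JC$ is of the form $f^*M$ with $7M = \Nm(f^*M) = 0$, so $\ker\Sigma \subseteq f^*(JC[7]) \subseteq J\widetilde C[7]$ is annihilated by $7$; combined with the order count, this forces $\ker\Sigma \cong (\ZZ/7)^{2g-2}$.

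Since a polarization of type $(d_1,\dots,d_{6g-6})$ with $d_1 \mid \cdots \mid d_{6g-6}$ has kernel isomorphic to $\bigoplus_i (\ZZ/d_i)^2$, the structure of $\ker\Sigma$ just obtained uniquely forces exactly $g-1$ of the $d_i$ to equal $7$ and the remaining $5g-5$ to equal $1$, yielding the claimed type $D=(1,\dots,1,7,\dots,7)$. The main obstacle will be that under condition~$(*)$ the curves $C,\widetilde C$ are allowed to be singular, so $JC$ and $J\widetilde C$ are generalized (semi-abelian) Jacobians with toric parts $T$ and $\widetilde T$ as in diagram~\eqref{eq2.1}, and the canonical polarization $\widetilde\Theta$ must be interpreted via the extension to stable curves (cf.~\cite{b}). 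One has to verify that the duality $\Nm \leftrightarrow f^*$ and the identity $(f^*)^*\widetilde\Theta \equiv 7\Theta_C$ survive in this setting, and, more delicately, that the toric $7$-torsion $\widetilde T_7$ appearing in the exact sequence~\eqref{eq2.2} does not perturb either the count $|\ker\Sigma|=7^{2g-2}$ or the $7$-torsion nature of $\ker\Sigma$.
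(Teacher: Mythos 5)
Your argument is a genuinely different route from the paper's: you work inside $J\widetilde C$ with the complementary abelian subvariety $f^*JC$ of $P$ and compute $|\ker\Sigma|$ from the identity $(f^*)^*\widetilde\Theta\equiv 7\Theta_C$, whereas the paper passes to the normalizations and studies the isogeny $h\colon P\times JN\to J\widetilde N$, identifying $\ker(h)$ as a maximal isotropic subgroup of the kernel of the pulled-back principal polarization. When $C$ (hence $\widetilde C$) is smooth, your argument is correct and complete: $\ker\Sigma=P\cap f^*JC$ has order $7^{2g}/7^2=7^{2g-2}$ and is killed by $7$ because $\Nm\circ f^*=[7]$, which forces $\ker\Sigma\cong(\ZZ/7)^{2g-2}$ and hence the stated type. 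In that regime your proof is arguably cleaner than the paper's.

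However, the proposition is asserted under condition $(*)$, which allows $\widetilde C$ and $C$ to be nodal (indeed, the nodal case is the one actually used later, for the boundary coverings of types (i)--(iv)), and there your argument has a genuine gap that you flag but do not close. Concretely: $J\widetilde C$ and $JC$ are only semi-abelian, so $f^*JC$ is \emph{not} an abelian subvariety of $J\widetilde C$ (it contains the torus $f^*T$), the notion of ``complementary abelian subvariety with respect to a principal polarization'' does not apply as stated, and the quantitative inputs change: $JC[7]$ has order $7^{2g-t}$ rather than $7^{2g}$ (where $t=\dim T$), and ``$\deg\phi_{7\Theta_C}=7^{2g}$'' has no direct meaning for the generalized Jacobian. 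The paper's own computation shows that the toric contribution genuinely enters ($\dim_{\FF_7}\ker(h)=2g-t-1$, $\dim_{\FF_7}JN[7]=2g-2t$) and only cancels at the end to give $\dim_{\FF_7}\ker\Sigma=2g-2$; so the verification you defer is not routine bookkeeping but the actual content of the singular case. The standard repair is exactly the paper's device: replace $J\widetilde C$ by the principally polarized abelian variety $J\widetilde N$ via the isogeny $P\times JN\to J\widetilde N$, and run the isotropy/degree count there. As it stands, your proof establishes the proposition only for smooth $C$.
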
 

\begin{proof}(Following \cite[Proposition 2.4]{f}). Consider the isogeny
$$
h: P \times JN \ra J \widetilde N.
$$
Clearly $\ker(h) \subset P[7]$. As in \cite[Proof of Theorem 3.7]{b}, one sees that $\ker(h)$ is 
isomorphic to
the group of points $a \in JC[7]$ such that $f^*a \in P$ and 
$$
\dim_{\FF_7} \ker(h) = \dim_{\FF_7} (JC[7]) -1 = 2g -t -1,
$$
where $t = \dim T = \dim \widetilde T$. Now $\ker(h)$ is a maximal isotropic subgroup of the kernel 
of the polarization of $P \times JN$, since this polarization is the pullback under $h$ of the principal 
polarization of $J \widetilde N$. This implies $\dim_{\FF_7} (\ker \Sigma \times JN[7]) = 4g-2t-2$.
Since $\dim_{\FF_7} (JN[7]) = 2g-2t$, it follows that $\dim_{\FF_7} (\ker \Sigma)= 2g-2$. Since 
$\ker \Sigma \subset P[7]$, this gives the assertion.
\end{proof}

\section{The condition (**)}

As in the last section let $f:\widetilde C \ra 
C$ be a $G$-covering of stable curves. Recall that  a node $z \in \widetilde C$ is either
\begin{itemize}
\item of index 1, i.e $|\Stab z| = 1$ in which case $f^{-1} (f(z))$ consists of 7 nodes 
which are cyclicly permuted under $\sigma$ or
\item of index 7, i.e. $|\Stab z| = 7$ in which case $z$ is the only preimage of the node $f(z)$ and $f$ 
is totally ramified at both branches of $z$. Since $\sigma$ is of order 7, the two branches of $z$
are not exchanged.
\end{itemize}

We also call a node of $C$ {\it of index} $i$ if a preimage (and hence every preimage) under $f$ is 
a node of index $i$.
We assume the following condition for the $G$-covering $f:\widetilde C \ra C$ of connected stable curves:

$$
(**) \left\{ \begin{array}{l}
                p_a(C) = g \; \mbox{and}  \; p_a(\widetilde C) = 7g-6;\\
                 \sigma \; \mbox{is not the identity on any irreducible component of} \;\widetilde C;\\
                \mbox{if at a fixed node of} \; \sigma \; \mbox{one local parameter is multiplied by} \;           
                 \rho^i, \; \mbox{the other is} \\
                 \mbox{multiplied by} \;  \rho^{-i}, \; \mbox{where} \; \rho \; \mbox{
denotes a fixed 7-th root of unity};\\
               P := \Prym(f) \; \mbox{is an abelian variety}. 
       \end{array}  \right.
$$
Under these assumptions the nodes of $\widetilde C$ are exactly the preimages of the nodes of $C$.
We denote for $i =1$ and 7:
\begin{itemize}
\item $n_i:=$ the number of nodes of $C$ of index $i$, i.e. nodes whose preimage consists of $\frac{7}{i}$ nodes of $\widetilde C$,
\item $c_i :=$ the number irreducible components of $C$ whose preimage consists of $\frac{7}{i}$ irreducible components of $\widetilde C$,
\item $r :=$ the number of fixed nonsingular points under $\sigma$.
\end{itemize}

\begin{lem} \label{lem3.1}
The covering satisfies $(**)$ if and only if $r=0$ and $c_1 = n_1$.
\end{lem}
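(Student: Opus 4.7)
The lemma isolates the combinatorial content of $(**)$. The plan is to prove two numerical equivalences: $p_a(\widetilde C) = 7g - 6$ will correspond to $r = 0$, and the Prym variety $P$ being an abelian variety will correspond to $c_1 = n_1$. The remaining clauses of $(**)$, namely $p_a(C) = g$ (part of the data), the nontriviality of $\sigma$ on each irreducible component, and the matching $\rho^i, \rho^{-i}$ at fixed nodes (admissibility), are either part of the standing setup or become automatic from these two numerical conditions.

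For the first equivalence I would compute $p_a(\widetilde C)$ component by component. Over an index-$1$ component $Y$ of $C$ the preimage is $7$ disjoint isomorphic copies of $Y$, so contributes $7g(Y)$ to the sum of geometric genera. Over an index-$7$ component the map $\widetilde Y \to Y$ is a cyclic degree-$7$ cover, and since $G$ has prime order every $\sigma$-fixed point is totally ramified, contributing $6$ in Riemann-Hurwitz. Writing $r_Y$ for the smooth fixed points on $\widetilde Y$ and $b_Y$ for the branches of $\widetilde Y$ through fixed nodes, one has
$$2g(\widetilde Y) - 2 = 7(2g(Y) - 2) + 6(r_Y + b_Y),$$
together with $\sum r_Y = r$ and $\sum b_Y = 2 n_7$ (the latter using that, since $7$ is odd, $\sigma$ cannot swap the two branches of a fixed node, so both branches lie on index-$7$ components). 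Summing these contributions and plugging them into the nodal-curve formula $p_a(X) = \sum g_j + \#\mbox{nodes} - \#\mbox{components} + 1$ for both $C$ and $\widetilde C$, the index-$1$ and index-$7$ terms in nodes and components conspire to cancel against $7\,p_a(C)$, leaving
$$p_a(\widetilde C) = 7\,p_a(C) - 6 + 3r.$$
Since $p_a(C) = g$, this equals $7g - 6$ if and only if $r = 0$.

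For the second equivalence I would read off the toric part of $\ker\Nm$ from diagram \eqref{eq2.1}. Because $\Nm \circ f^* = 7$ and the torus $T$ is divisible, $\Nm: \widetilde T \to T$ is surjective, so the toric part of $\ker \Nm$ has dimension $\dim \widetilde T - \dim T$. Since $C$ and $\widetilde C$ are both connected, these are the first Betti numbers of the respective dual graphs, which gives
$$\dim \widetilde T - \dim T = \bigl((7n_1 + n_7) - (7c_1 + c_7) + 1\bigr) - \bigl((n_1 + n_7) - (c_1 + c_7) + 1\bigr) = 6(n_1 - c_1).$$
The semi-abelian variety $P = (\ker \Nm)^0$ has toric part of exactly this dimension, and is therefore an abelian variety precisely when $n_1 = c_1$.

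In the reverse direction, the finiteness implicit in "$r = 0$" forces $\sigma$ to act nontrivially on every component (otherwise infinitely many nonsingular points would be fixed), and the $\rho^i, \rho^{-i}$ matching at fixed nodes is part of the admissibility built into the definition of $G$-covers of stable curves. The step I expect to be trickiest is justifying the branch count $\sum b_Y = 2 n_7$: it relies both on the prime-order argument (that $\sigma$ cannot swap the two branches of a node) and on the observation that a component carrying a $\sigma$-fixed branch must itself be preserved by $\sigma$, hence of index $7$. These two ingredients together close the loop between the fixed-point geometry of $\sigma$ and the combinatorics of the dual graphs.
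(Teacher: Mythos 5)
Your proposal is correct and follows essentially the same route as the paper: Riemann--Hurwitz on the normalization (the paper does it globally on $\widetilde N \to N$ rather than component by component, but the resulting identity $p_a(\widetilde C)-1 = 7(p_a(C)-1)+3r$ is identical), followed by comparing the toric ranks $\dim\widetilde T$ and $\dim T$ via the first Betti numbers of the dual graphs to get the difference $6(n_1-c_1)$. Your explicit handling of the branch count $2n_7$ and of the non-numerical clauses of $(**)$ is consistent with what the paper leaves implicit.
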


In particular, any covering satisfying $(**)$ is an admissible $G$-cover (for the definition see 
Section 5 below).
                
\begin{proof} (As in \cite{b} and \cite{f}).
Let $\widetilde N$ respectively $N$ be the normalization of $\widetilde C$ respectively $C$. The covering $\widetilde f: \widetilde N \ra N$ 
is ramified exactly at the points lying over the fixed points of
$\sigma: \widetilde C \ra \widetilde C$. Hence the Hurwitz formula says
$$
p_a(\widetilde N) -1 = 7(p_a(N) -1) + 3r +6 n_7. 
$$
So 
\begin{eqnarray*} 
p_a(\widetilde C) -1 & = & p_a(\widetilde N) -1 + 7n_1 + n_7\\
& = & 7(p_a(N) -1) + 3r + 7n_1 + 7 n_7.
\end{eqnarray*}
Moreover,
$$
p_a(C) -1 = p_a(N) -1 +n_1 + n_7
$$ 
which altogether gives
$$
p_a(\widetilde C) -1 = 7(p_a(C) -1) + 3r.
$$ 
Hence the first condition in $(**)$ is equivalent to $r = 0$. 

Now we discuss the condition that $P$ is an 
abelian variety. For this consider again the diagram \eqref{eq2.1}.
From the surjectivity of the norm maps it follows that 
$P$ is an abelian variety if and only if $\dim \widetilde T = \dim T$. Now
$\dim J \widetilde N = p_a(\widetilde N) - n_7 - 7 n_1 +c_7 + 7c_1 -1$ and thus
$$
\dim \widetilde T = (n_7 - c_7) + 7(n_1 - c_1) + 1
$$ and 
$$
\dim T = (n_7 - c_7) + (n_1 -c_1) + 1.
$$
Hence $\dim \widetilde T = \dim T$ if and only if $c_1 = n_1$.
\end{proof}

Let $f: \widetilde C \ra C$ be a $G$-covering satisfying the condition $(**)$ with generating 
automorphism $\sigma$. We denote by $B$ the union of the components of $\widetilde C$ fixed under 
$\sigma$ and write 
$$
\widetilde C = A_1 \cup \cdots \cup A_7 \cup B
$$
with $\sigma(A_i) = A_{i+1}$ where $A_8 = A_1$.

\begin{prop} \label{prop3.2}
{\em (i)} If $B = \emptyset$, then $\widetilde C = A_1 \cup \cdots \cup A_7$ where $A_1$ 
can be chosen connected and tree-like and $\#A_i \cap A_{i+1} = 1$ for $i = 1, \dots, 7$.\\
{\em (ii)} If $B \neq \emptyset$, then $A_i \cap A_{i+1} = \emptyset$ for $i = 1, \dots , 7$.
Each connected component of $A_1$ is tree-like and meets $B$ at only one point. Also $B$ is connected.
\end{prop}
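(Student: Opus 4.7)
The plan is to study the dual graph $\Gamma_{\widetilde C}$ of $\widetilde C$ together with its $\sigma$-action, whose quotient is $\Gamma_C$, and to extract the structural claims from a Betti-number count made possible by the identity $c_1=n_1$ of Lemma \ref{lem3.1}. Introduce the subgraph $\Gamma_B^{(C)}\subset\Gamma_C$ spanned by the $c_7$ black vertices (components of $C$ whose preimage is a single fixed component, i.e.\ a component of $B$) and the $n_7$ index-$7$ edges; condition $(**)$ guarantees that both branches of every index-$7$ node lie on components of $B$, so this subgraph is indeed the dual graph of $B$. A direct count using $c_1=n_1$ and the connectedness of $C$ gives
$$
b_1(\Gamma_C)=n_7-c_7+1,\qquad b_1(\Gamma_B^{(C)})=n_7-c_7+b_0(\Gamma_B^{(C)}).
$$
Since $H_1$ of any subgraph injects into $H_1$ of the ambient graph, this forces $b_0(\Gamma_B^{(C)})\le 1$; hence whenever $B\ne\emptyset$ one has $b_0(\Gamma_B^{(C)})=1$, and in particular $B$ is connected.

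In case (ii), the resulting equality $b_1(\Gamma_B^{(C)})=b_1(\Gamma_C)$ means that adjoining to $\Gamma_B^{(C)}$ the $c_1$ non-$B$ vertices and the $c_1=m+p$ new edges ($m$ mixed edges from non-$B$ to $B$, $p$ pure edges inside the non-$B$ part) produces no new cycle. So every new edge must be a bridge, and consequently the non-$B$ part $F\subset\Gamma_C$ is a forest, each of whose trees is attached to $\Gamma_B^{(C)}$ by exactly one mixed edge. Pulling $F$ back to $\Gamma_{\widetilde C}$ yields a free $\ZZ/7$-cover of a simply connected space, which splits into $7$ disjoint copies $F_1,\dots,F_7$ cyclically permuted by $\sigma$. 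Taking $A_i$ to be the subcurve of $\widetilde C$ with dual graph $F_i$, the disjointness of the $F_i$'s immediately yields $A_i\cap A_j=\emptyset$ for $i\ne j$, while the uniqueness of the mixed edge on each tree of $F$ lifts to the statement that every connected component of $A_1$ is tree-like and meets $B$ at exactly one point.

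For case (i), $B=\emptyset$ forces $c_7=n_7=0$ and $b_1(\Gamma_C)=1$. I pick a spanning tree $T\subset\Gamma_C$; the unique extra edge $e_0=\Gamma_C\setminus T$ generates $H_1(\Gamma_C,\ZZ)$. The preimage of $T$ in $\Gamma_{\widetilde C}$ consists of $7$ disjoint copies $T_1,\dots,T_7$, which the $7$ lifts of $e_0$ glue together. Connectedness of $\widetilde C$ forces the monodromy around the loop to be a generator of $\ZZ/7$, so after composing with a suitable automorphism of $\ZZ/7$ we may relabel the copies so that the $i$-th lift of $e_0$ joins $T_i$ to $T_{i+1}$. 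Defining $A_i$ as the subcurve of $\widetilde C$ with dual graph $T_i$ then produces a connected and tree-like $A_1$ with $\#(A_i\cap A_{i+1})=1$.

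The main technical hurdle is keeping the $b_1$-inequalities tight at every step, which is exactly where the hypothesis $c_1=n_1$ enters decisively; it is also worth checking carefully that in case (ii) no pure-edge orbit can connect different slices, which reduces to the well-known triviality of \'etale covers of simply connected graphs. Once these two facts are in place, the structural conclusions follow essentially by inspection.
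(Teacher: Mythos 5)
Your argument is correct, and it reaches the same conclusions by a genuinely different route. The paper works upstairs in the dual graph of $\widetilde C$: in case (i) it invokes the equivariant-fundamental-domain lemma (Lemma \ref{lem3.3}) to produce a connected $A_1$ and then runs the count $v=e+s+\#(A_1\cap A_2)$ against $1-v+e\ge 0$; in case (ii) it counts directly with $n_{A_1}-i_{A_1}+c_{A_1}\ge 0$ and the connectivity bound $m\ge c_{A_1}$. You instead do all the Euler-characteristic bookkeeping downstairs in $\Gamma_C$ (the identities $b_1(\Gamma_C)=n_7-c_7+1$ and $b_1(\Gamma_B^{(C)})=n_7-c_7+b_0(\Gamma_B^{(C)})$, plus injectivity of $H_1$ of a subgraph) and then lift via covering-space theory: triviality of connected covers of trees in case (ii), and the monodromy of the unique loop in case (i). The underlying numerical input is identical --- everything hinges on $c_1=n_1$ making the $\chi$-inequalities tight --- but your packaging buys a few things: it dispenses with Lemma \ref{lem3.3} entirely, it makes the connectedness of $B$ an immediate consequence of $b_0(\Gamma_B^{(C)})\le 1$ (the paper gets this only implicitly at the very end), and it handles the relabeling in case (i) explicitly (the paper's $S$ could a priori satisfy $\#(A_1\cap A_{1+k})=1$ for $k=2$ or $3$ rather than $k=1$, which also requires replacing $\sigma$ by a power). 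One small point to tighten: in case (ii) you write $n_1=m+p$, tacitly excluding index-$1$ edges with both endpoints black, i.e.\ orbits of moving nodes lying entirely on $B$; these must be ruled out, and your own count does it if you carry the extra term ($c_1=m+p+b\ge b_0(F)+(c_1-b_0(F))+b$ forces $b=0$). The paper has the same lacuna, asserting without proof that $(**)$ implies $B$ carries no moving node, so you are in good company, but the one-line patch is worth adding.
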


For the proof we need the following elementary lemma (the analogue of \cite[Lemma 5.3]{b} and 
\cite[Lemma 2.6]{f}) which will be applied to the dual graph of $\widetilde C$.

\begin{lem} \label{lem3.3}
Let $\Gamma$ be a connected graph with a fixed-point free automorphism $\sigma$ of order 7.
Then there exists a connected subgraph $S$ of $\gamma$ such that 
$\sigma^i(S) \cap \sigma^{i+1}(S) = \emptyset$ for $i = 0, \dots,6$ and $\cup_{i=0}^6 \sigma^i(S)$ contains every 
vertex of $\Gamma$. 
\end{lem}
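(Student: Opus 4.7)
The plan is to construct $S$ as a lift of a spanning tree of the quotient graph $\Gamma/\langle\sigma\rangle$. First, I would observe that $\sigma$ acts freely not only on vertices (by hypothesis) but also on edges: an edge $\{v,w\}$ fixed set-wise by $\sigma$ must satisfy either $\sigma v = v$ (excluded) or $\sigma v = w$ and $\sigma w = v$, which forces $\sigma^2 v = v$ and contradicts that $\sigma$ has order $7$ and acts without fixed vertices. Thus $\pi\colon \Gamma \to \Gamma/\langle\sigma\rangle$ is a genuine $7$-sheeted covering of graphs with connected base.

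Next, I would pick a spanning tree $T$ of $\Gamma/\langle\sigma\rangle$, choose a root $v_0 \in T$ together with a lift $\widetilde v_0 \in \pi^{-1}(v_0)$, and perform a breadth-first traversal of $T$. Each time a new edge $\{u,u'\}$ of $T$ is encountered with $u$ already lifted to some $\widetilde u \in S$, I lift $\{u,u'\}$ to the unique edge of $\Gamma$ incident to $\widetilde u$ in the $\sigma$-orbit above $\{u,u'\}$, and add its other endpoint to $S$. The resulting subgraph $S$ is a connected tree, and its vertex set is a complete system of representatives for the $\sigma$-orbits on $V(\Gamma)$.

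To conclude, since $\sigma$ acts freely on $V(\Gamma)$ the seven vertex sets $V(\sigma^i S)$ for $i=0,\dots,6$ partition $V(\Gamma)$. In particular they are pairwise disjoint, so for each $i$ the subgraphs $\sigma^i(S)$ and $\sigma^{i+1}(S)$ share no vertex and hence no edge, giving $\sigma^i(S) \cap \sigma^{i+1}(S) = \emptyset$. Moreover $\bigcup_{i=0}^{6} \sigma^i(S)$ contains every vertex of $\Gamma$ because the vertices of $S$ surject onto the vertices of $\Gamma/\langle\sigma\rangle$ through the spanning tree $T$.

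There is no serious obstacle here; the only point worth highlighting is that freeness of the action on edges is automatic from the prime order $7$ (the analogous Lemma~5.3 in Beauville requires an additional twist in the order~$2$ case, where $\sigma$-fixed edges genuinely occur). The key organizational trick is the breadth-first lift, which ensures in one stroke that $S$ is connected and that its vertex set is a set of orbit representatives.
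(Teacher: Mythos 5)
Your proof is correct. Note that the paper itself gives no argument for this lemma: it is stated as ``the analogue of'' Beauville's Lemma~5.3 and Faber's Lemma~2.6 and left to the reader, so there is no in-text proof to compare against. Your covering-space argument is the natural one for odd prime order, and it actually delivers a stronger conclusion than the lemma asks for: the seven translates $\sigma^i(S)$ are \emph{pairwise} disjoint, not merely consecutively disjoint. You correctly isolate the one point that distinguishes this case from Beauville's order-$2$ setting, namely that an automorphism of odd order acting freely on vertices cannot invert an edge, so the quotient map is an honest $7$-sheeted graph covering and a spanning tree of the base lifts to seven disjoint copies.

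Two small points you should tighten. First, for $\pi\colon\Gamma\to\Gamma/\langle\sigma\rangle$ to be a covering you need the \emph{whole group} $\langle\sigma\rangle$ to act freely on edges, not just $\sigma$ itself; this is immediate because every nontrivial power $\sigma^k$ generates $\langle\sigma\rangle$ (as $\gcd(k,7)=1$), so the same two-line argument excludes an edge fixed or inverted by $\sigma^k$. Second, the graph to which the lemma is applied is the dual graph of a stable curve, hence a multigraph possibly with loops and multiple edges; this causes no difficulty (a loop descends to a loop and multiple edges to multiple edges, and neither ever lies in a spanning tree of the quotient), but it is worth saying, since ``edge $\{v,w\}$'' tacitly assumes a simple graph. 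With those remarks added, the breadth-first lift of a spanning tree does exactly what you claim: $S$ is a connected tree whose vertex set is a transversal of the $\sigma$-orbits, so the $\sigma^iV(S)$ partition $V(\Gamma)$, and vertex-disjoint subgraphs share no edges.
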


\begin{proof}[Proof of Proposition \ref{prop3.2}] (As in \cite{b} and \cite{f}). Let $\Gamma$ denote 
the dual graph of $\widetilde C$.
If $B= \emptyset$, let $A_1$ correspond to the subgraph $S$ of Lemma \ref{lem3.3}. Let $v$ be
the number of vertices of $S$, $e$ the number of edges of $S$ and $s$ the numbers of nodes of 
$A_1$ which belong to only one component. The equality $c_1 = n_1$ implies 
$$
v = e + s - \#A_1 \cap A_2.
$$
Since $1 - v + e \geq 0$ and $\#A_1 \cap A_2 \geq 1$ gives $s=0, \; \#A_1 \cap A_2 = 1$ and 
$1-v+e = 0$. So $A_1$ is tree-like. This proves (i).

Asume $B \neq \emptyset$ and denote
\begin{itemize}
\item $t := \# A_1 \cap A_2, $
\item $m := \# A_i \cap B$ for $i = 1, \dots , 7$,
\item $i_{A_1} := \#$ irreducible components of $A_1$,
\item $c_{A_1} := \#$ of connected components of $A_1$,
\item $n_{A_1} := \#$ nodes of $A_1$.
\end{itemize}
Recall that assumption $(**)$ implies that $B$ does not contain any node which moves under 
$\sigma$. Then
$$
c_1 = i_{A_1}  \quad \mbox{and} \quad n_1 = n_{A_1} + r + m.
$$
For any curve we have $n_{A_1} - i_{A_1} + c_{A_1} \geq 0$ 
(see \cite[Proof of Lemma 5.3]{b}). Thus, if $c_1 = n_1$, 
$$
0 = n_{A_1} + t + m  - i_{A_1}\geq t+m - c_{A_1}.
$$
Since $\widetilde C$ is connected, any connected component of $\cup_{i=1}^7 A_i $ meets  $B$.
But then any connected component of $A_1$ meets $B$ which implies $m \geq c_{A_1}$. Hence
$$
0 \geq t+m-c_{A_1} \geq t \geq 0.
$$ 
Hence $t = 0, \; m= c_{A_1}$ and $n_{A_1} - i_{A_1} + c_{A_1} = 0$. So $A_i \cap A_{i+1} = 
\emptyset$ and $B$ is connected.
\end{proof}

\begin{thm}
Suppose that $f : \widetilde C \ra C$ satisfies condition $(**)$. Then there exist the following isomorphisms of polarized abelian varieties:
\\
In case {\em (i)}, \; $(P,\Sigma) \simeq \ker((JA_1)^7 \ra JA_1)$ with  the polarization induced by the principal polarization on $(JA_1)^7$.
\\
In case {\em (ii)}, \; $(P,\Sigma) \simeq  \ker((JA_1)^7 \ra JA_1) \times Q$, where $Q$ is the generalized Prym variety associated to the covering 
$B \ra B/ \sigma$.
\end{thm}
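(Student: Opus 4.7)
The strategy combines the commutative diagram (2.1) from the proof of Proposition 2.2 with the explicit geometry of Proposition 3.2, then matches polarizations via the isogeny $h \colon P \times JN \to J\widetilde N$ of Proposition 2.3. For case (i), Proposition 3.2(i) identifies the normalizations as $\widetilde N = A_1^{\nu} \sqcup \cdots \sqcup A_7^{\nu}$ and $N = A_1^{\nu}$, so that $J\widetilde N \cong (JA_1)^7$, $JN \cong JA_1$, and the induced norm becomes the summing homomorphism $\mathbf{s}\colon (JA_1)^7 \to JA_1$, $(L_1,\dots,L_7) \mapsto L_1 \otimes \cdots \otimes L_7$. The toric parts are $\widetilde T \cong T \cong \CC^*$ (arising from the heptagonal cycle in the dual graph of $\widetilde C$ and the loop in that of $C$), and from the proof of Proposition 2.2 we have $\ker(\Nm|_{\widetilde T}) \cong \mu_7$. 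The snake lemma applied to (2.1) then yields
$$0 \to \mu_7 \to \ker(\Nm\colon J\widetilde C \to JC) \to \ker(\mathbf{s}) \to 0.$$
Since $\ker(\mathbf{s}) \cong (JA_1)^6$ is connected of dimension $6(g-1) = \dim P$, while $\ker(\Nm)$ has exactly seven connected components by Proposition 2.2, the identification $\pi_0(\ker(\Nm)) \cong \mu_7/(\mu_7 \cap P)$ forces $\mu_7 \cap P = 0$, and restricting the projection to the identity component produces an isomorphism of abelian varieties $P \to \ker(\mathbf{s})$.

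To identify the polarizations in case (i), write $h = \alpha + \Delta$ where $\alpha$ is the composition $P \to \ker(\mathbf{s}) \hookrightarrow (JA_1)^7$ and $\Delta\colon JN = JA_1 \hookrightarrow (JA_1)^7$ is the diagonal. Using the self-duality of $(JA_1)^7$ via the product principal polarization $\phi$, the dual of $\Delta$ is the summing map $\mathbf{s}$, which vanishes on $\ker(\mathbf{s})$; hence $\Delta(JA_1)$ and $\ker(\mathbf{s})$ are $\phi$-orthogonal, giving
$$h^*\phi \;=\; \alpha^*\phi \oplus \Delta^*\phi \;=\; \phi_{\ker(\mathbf{s})} \oplus 7\phi_{JA_1},$$
where $\phi_{\ker(\mathbf{s})}$ denotes the polarization induced on $\ker(\mathbf{s})$ from $\phi$. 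Comparing with the analogous computation for the canonical polarization $\widetilde\Theta$ from \cite{b}---which shows $h^*\widetilde\Theta$ equals the product of $\Sigma$ on $P$ with $7\phi_{JN}$ on $JN$---one concludes that $\Sigma$ corresponds to $\phi_{\ker(\mathbf{s})}$ under the isomorphism $P \cong \ker(\mathbf{s})$. Both polarizations are of type $(1^{5(g-1)}, 7^{g-1})$ by Proposition 2.3.

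Case (ii) proceeds analogously. By Proposition 3.2(ii), the $A_i$ are pairwise disjoint, each connected component of $A_1$ is tree-like and meets $B$ at a single point, and $B$ is connected. Consequently $\widetilde N = (A_1^{\nu} \sqcup \cdots \sqcup A_7^{\nu}) \sqcup B^{\nu}$, every cycle in the dual graph of $\widetilde C$ arises from $B$'s subgraph alone, and the norm on $J\widetilde N$ splits as $\mathbf{s} \times \Nm_B$ where $\Nm_B$ is the norm for $B^{\nu} \to (B/\sigma)^{\nu}$. Running the snake-lemma argument separately on the two factors of $J\widetilde N$, the $A$-factor recovers $\ker((JA_1)^7 \to JA_1)$ exactly as in case (i), while the $B$-factor (absorbing the entire toric contribution $\widetilde T$) yields the generalized Prym variety $Q = \Prym(B \to B/\sigma)$; these assemble into the claimed polarized isomorphism $P \simeq \ker((JA_1)^7 \to JA_1) \times Q$.

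The main obstacle I anticipate is the polarization comparison, specifically justifying how the canonical polarization $\widetilde\Theta$ of the generalized Jacobian pulls back under $h$ to the claimed product form. This relies on Beauville's construction of $\widetilde\Theta$ for stable curves, and in case (ii) one additionally needs to verify that the $A$- and $B$-contributions are mutually $\widetilde\Theta$-orthogonal, so that the product decomposition extends from abelian varieties to polarized abelian varieties.
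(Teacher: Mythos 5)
Your strategy is essentially the paper's: specialize diagram (2.1) to the geometry of Proposition 3.2, identify $J\widetilde N\cong (JA_1)^7$ and $JN\cong JA_1$ so that the induced norm is the sum map $\mathbf{s}$, and split off $JB$ and the generalized Prym $Q$ in case (ii). However, there is a concrete error in your case (i). The claim $\ker(\Nm|_{\widetilde T})\cong\mu_7$ is false here: at an index-$1$ node the local norm is the identity on units, so in the coordinates $\widetilde T\cong\CC^*$ (monodromy around the $7$-cycle of copies of $A_1$) and $T\cong\CC^*$ (monodromy around the loop of $C$) the norm is $\lambda\mapsto\lambda$, an isomorphism. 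This is precisely the point the paper's proof turns on ("$\Nm\colon\CC^*\to\CC^*$ is an isomorphism"): the snake lemma then gives $\ker(\Nm\colon J\widetilde C\to JC)\cong\ker(\mathbf{s})$ outright, this kernel is connected, and hence equals $P$. The statement from the proof of Proposition 2.2 that you import is derived under the hypothesis that $f^*$ is injective on $T$, which fails exactly in case (i) because the $7$-torsion point defining the cover lies in $T$ (the cover trivializes on normalizations), so $f^*|_T$ is $\lambda\mapsto\lambda^7$ with kernel $\mu_7$. Your second input --- that $\ker(\Nm\colon J\widetilde C\to JC)$ has seven components --- is likewise wrong for the multidegree-zero Picard groups appearing in diagram (2.1); the seven components of Lemma 2.1 are indexed by the nonzero multidegrees $(k,0,\dots,0,-k)$ of $M\otimes\sigma^*M^{-1}$ and live in the total-degree-zero Picard group. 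The two errors cancel, which is why you still land on $P\cong\ker(\mathbf{s})$, but neither intermediate statement is correct.

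On the polarizations, your observation that $\Delta(JA_1)$ and $\ker(\mathbf{s})$ are complementary abelian subvarieties of $((JA_1)^7,\phi)$ is correct and workable, but the detour through $h$ and $JN$ is unnecessary: once one knows $\Nm|_{\widetilde T}$ is an isomorphism, $P\cap\widetilde T=0$, so $P\hookrightarrow J\widetilde C\to J\widetilde N=(JA_1)^7$ is injective with image $\ker(\mathbf{s})$, and since $\widetilde\Theta$ is by construction the pullback of the product principal polarization of $J\widetilde N$, the restriction $\Sigma=\widetilde\Theta|_P$ is literally $\phi|_{\ker(\mathbf{s})}$. This disposes of the "main obstacle" you flag at the end. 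Your case (ii) is consistent with the paper, which uses the cleaner splitting $J\widetilde C\cong(JA_1)^7\times JB$ (the tree-like $A_i$ are attached to $B$ at single points and contribute nothing to the toric part, so no snake-lemma argument is needed on the $A$-factor) together with the corresponding splitting of the norm.
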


\begin{proof} (As in \cite[Theorem 5.4]{b}). In case (i), $\widetilde C$ is obtained from the disjoint 
union of 7 copies of $A_1$ by fixing 2 smooth points $p$ and $q$ of $A_1$  and identifying $q$ in the 
$i$-th copy  with $p$ in the $i+1$-th copy of $A_1$ cyclicly. The curve $C = \widetilde C/G$ is obtained from $A_1$ by 
identifying $p$ and $q$ and $f: \widetilde C \ra C$ is an \'etale covering. Note that $JA_1$ is an abelian 
variety, since $A_1$ is tree-like.

Consider the following diagram
\begin{equation*} 
\xymatrix{
0 \ar[r]  &  \CC^* \ar[r] \ar^{\Nm}[d] & J\widetilde C  \ar[r] \ar^{\Nm}[d] & (JA_1)^7  \ar[r] \ar^{m}[d] 
& 0\\
             0 \ar[r] &  \CC^*  \ar[r] &  JC  \ar[r] & JA_1 \ar[r] & 0 
    }
\end{equation*} 
where $m$ is the addition map. One checks immediately that $\Nm: \CC^* \ra \CC^*$  is an isomorphism. This implies the assertion.

In case (ii) we have
$$
J\widetilde C \simeq (JA_1)^7 \times JB \quad \mbox{and} \quad 
P = \ker(\Nm)^0 \simeq \ker((JA_1)^7 \ra JA_1) \times Q 
$$
which immediately implies the assertion.
\end{proof}


\section{The extension of the Prym map to a proper map}


Let $\cR_{g,7}$ denote the moduli space of non-trivial  \'etale $G$-covers $f: \widetilde C \ra C$ of smooth
 curves $C$ of genus $g$ and $\cB_D$ the moduli space of polarized abelian varieties of dimension 
$6g-6$ with polarization of type $D$ with $D$ as in Proposition \ref{prop2.3} and compatible with the $G$-action. 
As in the introduction we denote by
$$
\pr_{g,7}: \cR_{g,7} \ra \cB_D
$$ 
the corresponding Prym map associating to the covering $f$ the Prym variety $\Prym(f)$. 
In order to extend this map to a proper map we consider the compactification $\overline \cR_{g,7}$
of $\cR_{g,7}$ consisting of admissible $G$-coverings of stable curves of genus $g$ introduced in 
\cite{acv}.

Let $\cX \ra S$ be a family of stable curves of arithmetic genus $g$. 
A {\it family of admissible $G$-covers} of $\cX$ over $S$ is a finite morphism $\cZ \ra \cX$
such that,
\begin{enumerate}
\item the composition $\cZ \ra \cX \ra S$ is a family of stable curves;
\item every node of a fiber of $\cZ \ra S$ maps to a node of the corresponding fiber of $\cX \ra S$;
\item $\cZ \ra \cX$ is a principal $G$-bundle away from the nodes;
\item if $z$ is a node of index 7 in a fibre of $\cZ \ra S$ and $\xi$ and $\eta$ are local coordinates of 
the two branches near $z$, any element of 
the stabilizer 
$\Stab_G(z)$ acts as 
$$
(\xi, \eta) \mapsto (\rho \xi, \rho^{-1}\eta)
$$
where $\rho$ is a primitive $7$-th root of unity. 
\end{enumerate}

In the case of $S = \Spec \CC$ we just speak of an admissible $G$-cover.
In this case 
the ramification index at any node $z$ over $x$ equals the order of the stabilizer of $z$ and depends only on $x$. 
It is called the {\it index} of the $G$-cover $\cZ \ra \cX$ at $x$. Since 7 is a prime, the index of a node 
is either 1 or 7. 
Note that, for any admissible $G$-cover $Z \ra X$, the curve $Z$ is stable if and only if and only if $X$ is stable. 

As shown in \cite{acv} or \cite[Chapter 16]{acg}, the moduli space $\overline \cR_{g,7}$ of admissible 
$G$-covers stable of curves of genus $g$ is a natural compactification of $\cR_{g,7}$. 
Clearly the coverings satisfying condition $(**)$ are admissible and form an
open subspace $\widetilde \cR_{g,7}$ of $\overline \cR_{g,7}$.

\begin{thm} \label{thm4.1}
The map $\pr_{g,7}: \cR_{g,7} \ra \cB_D$ extends to a proper map 
$\widetilde {\pr}_{g,7}: \widetilde \cR_{g,7} \ra \cB_D$.
\end{thm}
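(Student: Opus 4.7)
The plan is to prove the theorem in two steps: first construct the extended map on $\widetilde{\cR}_{g,7}$ using the groundwork of Sections 3 and 4, then verify properness via the valuative criterion, with the proper Abramovich--Corti--Vistoli compactification $\overline{\cR}_{g,7}$ of \cite{acv} serving as an intermediary. For the construction, I would set $\widetilde{\pr}_{g,7}([f]):=(P(f),\Sigma,[\sigma])$ for every $[f]\in\widetilde{\cR}_{g,7}$: since $(**)$ holds for $f$, Proposition \ref{prop2.2} makes $P(f)$ an abelian variety of dimension $6g-6$, Proposition \ref{prop2.3} identifies $\Sigma$ as being of type $D$, and the generator $\sigma$ of $G$ descends to an order-$7$ automorphism of $P(f)$ preserving $\Sigma$. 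Functoriality in families would follow immediately from the construction via the relative Picard of the universal family over $\widetilde{\cR}_{g,7}$, so that $\widetilde{\pr}_{g,7}$ is a morphism which restricts to $\pr_{g,7}$ on $\cR_{g,7}$.

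For properness I would apply the valuative criterion. Given a complete DVR $R$ with fraction field $K$ and a compatible pair of morphisms $\Spec K \to \widetilde{\cR}_{g,7}$ and $\Spec R \to \cB_D$, the properness of $\overline{\cR}_{g,7}$ would let me extend, after a finite base change, the generic family to a family of admissible $G$-covers $f_R:\widetilde{\cC} \to \cC$ over $R$ with central fibre $f_0:\widetilde{C}_0 \to C_0$. The main task would be to show $f_0 \in \widetilde{\cR}_{g,7}$, i.e.\ that $f_0$ satisfies $(**)$. Since every admissible cover is \'etale away from the nodes, $\sigma$ would have no smooth fixed points, giving $r=0$ automatically; by Lemma \ref{lem3.1} only $c_1=n_1$ would remain. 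I would establish this by comparing two $R$-group schemes extending $P(f_K)$: on one side, the identity component $\cP$ of $\ker(\Nm_{f_R})$ inside $\cJ:=\Pic^0(\widetilde{\cC}/R)$, a smooth flat $R$-group scheme with generic fibre $P(f_K)$ and special fibre the generalized Prym $P(f_0)$; on the other, the polarized abelian scheme $(\cA,\Sigma_R,\sigma_R)$ with $G$-action arising from $\Spec R \to \cB_D$. Via the universal property of the N\'eron model applied to $\cA$, these two families should canonically coincide, forcing $\cA_0=P(f_0)$ to be an abelian variety with no toric part. By the proof of Proposition \ref{prop2.2} this is equivalent to $\dim\widetilde T=\dim T$, hence to $c_1=n_1$, so $f_0$ satisfies $(**)$.

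The hard part will be the N\'eron-model identification $\cP=\cA$ over $R$: one must verify that $(\ker\Nm_{f_R})^0$ is $R$-flat with the expected fibres and that the resulting group scheme agrees with $\cA$ together with its polarization and $G$-action. This step is what converts the abelianness hypothesis supplied by $\cB_D$ into the combinatorial condition $c_1=n_1$ on the admissible cover $f_0$, and it relies essentially on the balanced local model at every index-$7$ node of $f_R$ required by admissibility. The argument parallels the treatment of the degree-$2$ case in \cite[\S5]{b} and the degree-$3$ case in \cite[\S3]{f}.
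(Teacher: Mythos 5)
Your proposal is correct and follows the same strategy as the proof the paper invokes: the paper simply cites Faber's Theorem 2.8 (the degree-$3$ case) with $3$ replaced by $7$, and that is essentially the argument you reconstruct --- extend the generic family inside the proper stack $\overline{\cR}_{g,7}$ of admissible $G$-covers, note that admissibility already gives $r=0$ and the local condition at the nodes, and use the abelian-scheme extension supplied by $\Spec R \to \cB_D$ to force the limit Prym to be an abelian variety, i.e.\ $c_1=n_1$ and hence $(**)$. The only point worth making explicit is that the identification $\cP\cong\cA$ rests on the uniqueness of semi-abelian models over a discrete valuation ring (Néron property plus density) rather than the bare mapping property alone, but you have correctly isolated exactly this as the step requiring verification.
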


\begin{proof} The proof is the same as the proof of  \cite[Theorem 2.8]{f} just replacing 
3-fold covers by 7-fold covers. So we will omit it. 
\end{proof}


\section{Generic finiteness of $\pr_{2,7}$}


From now on we consider only the case $g=2$, i.e. of $G$-covers of curves of genus 2. So 
$\dim \cR_{2,7} = \dim \cM_2 = 3$ and $\cB_D$ is the moduli space of polarized abelian varieties of 
type $(1,1,1,1,1,7)$ with $G$-action which is also of dimension 3.
Let $[f:\tC \ra C] \in \cR_{2,7}$ be a general point and let the covering $f$ be given by 
the $7$-division point $\eta \in JC$.

\begin{lem} \label{lem5.1}
{\em (i)} The cotangent space of $\cB_D$ at the point $\pr_{2,7}([f:\tC \ra C]) \in \cB_D$ is identified with the vector space
$\bigoplus_{i=1}^{3} \left( H^0(\omega_C \otimes \eta^{i}) \otimes H^0(\omega_C \otimes \eta^{7-i}) \right)$.

{\em (ii)} The codifferential of the map $\pr_{2,7}: \cR_{2,7} \ra \cB_D$ at the point $(f,\eta)$ is given by the sum of the multiplication maps
$$
\bigoplus_{i=1}^{3} \left( H^0(\omega_C \otimes \eta^{i}) \otimes H^0(\omega_C \otimes \eta^{7-i}) \right) \lra H^0(\omega_C^2).
$$
\end{lem}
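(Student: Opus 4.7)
The plan is to route everything through the eigenspace decomposition of $H^0(\omega_{\tC})$ under $\sigma^*$. Since the cover $f$ is determined by the $7$-torsion line bundle $\eta \in JC$ (so $\eta^{\otimes 7} \cong \cO_C$), the projection formula gives
\[
f_*\cO_{\tC} \;\cong\; \bigoplus_{i=0}^{6} \eta^{-i}
\qquad\text{and}\qquad
H^0(\omega_{\tC}) \;=\; \bigoplus_{i=0}^{6} H^0(\omega_C \otimes \eta^{i}),
\]
where, with an appropriate labeling of $\rho$, the $i$-th summand is the $\rho^{i}$-eigenspace of $\sigma^*$. The $i=0$ piece is the pullback $f^*H^0(\omega_C)$. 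Because $T_0 P = \ker(d\Nm_f)$ with $d\Nm_f = 1+\sigma+\cdots+\sigma^6$, it is the sum of the non-trivial $\sigma$-eigenspaces of $T_0 J\tC$, dually giving
\[
H^0(\Omega^1_P) \;=\; \bigoplus_{i=1}^{6} H^0(\omega_C\otimes \eta^{i}).
\]

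For part (i), I would use the standard identification of the cotangent space of $\cA_D$ at $P$ with $\mathrm{Sym}^2 H^0(\Omega^1_P)$, together with the fact that $\cB_D$ is \'etale-locally the fixed locus of the natural $G$-action on $\cA_D$. Hence the cotangent space of $\cB_D$ at $[P]$ is the $G$-invariant subspace of $\mathrm{Sym}^2 H^0(\Omega^1_P)$. In the eigenspace decomposition, the bidegree $(i,j)$ summand carries $\sigma$-character $\rho^{i+j}$, hence is invariant exactly when $i+j\equiv 0\pmod{7}$. With $1\le i,j\le 6$ this forces the pairs $(i,7-i)$ for $i=1,2,3$, giving the stated description of the cotangent space. (One may quickly sanity-check: each $H^0(\omega_C\otimes \eta^i)$ is one-dimensional by Riemann--Roch, so the decomposition is $3$-dimensional, matching $\dim \cB_D$ from Proposition \ref{p2.1}.)

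For part (ii), I would invoke the standard description of the codifferential of the Torelli map $\cM_8 \to \cA_8$ at $\tC$, namely the multiplication map $\mathrm{Sym}^2 H^0(\omega_{\tC}) \lra H^0(\omega_{\tC}^{2})$. The Prym map factors at the cotangent level through the restriction of this map to the $G$-invariant eigenspace pieces: since $f$ is \'etale, $f^*\omega_C^{2}\cong \omega_{\tC}^{2}$ and $H^0(\omega_{\tC}^{2})^G = H^0(\omega_C^{2})$, and multiplication sends
\[
H^0(\omega_C\otimes \eta^{i}) \otimes H^0(\omega_C\otimes \eta^{7-i}) \;\lra\; H^0(\omega_C^{2}\otimes \eta^{\otimes 7}) \;=\; H^0(\omega_C^{2}),
\]
which by Serre duality is the cotangent space of $\cM_2\simeq \cR_{2,7}$ at $C$ (noting that $\eta$, as a torsion point, is rigid). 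Summing over $i=1,2,3$ yields the codifferential in the claimed form.

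The main obstacle I expect is being precise about the two formal identifications: that the cotangent space of $\cB_D$ is really the $G$-invariant part of that of $\cA_D$, and that the codifferential of $\pr_{2,7}$ agrees with the $G$-equivariant restriction of the Torelli codifferential on $\tC$. Both are standard once one writes down the commutative diagram relating the moduli spaces $\cR_{2,7}$, $\cM_8$, $\cA_8$, and the Prym/Jacobian decomposition, and the argument is parallel to the $p=3$ case treated in \cite{f}, with only the bookkeeping of the seven eigenspaces changing.
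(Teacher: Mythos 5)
Your proposal is correct, and its overall skeleton coincides with the paper's: both use the eigenspace decomposition $H^0(\omega_{\tC})^- = \bigoplus_{i=1}^6 H^0(\omega_C\otimes\eta^i)$, the identification of $T^*_{\cA_D,P}$ with $S^2 H^0(\omega_{\tC})^-$ from \cite{lo}, and the standard description of the codifferential as the multiplication map into $H^0(\omega_C^2)$ for part (ii). The one place where you genuinely diverge is the identification of $T^*\cB_D$ inside $T^*_{\cA_D,P}$ in part (i). The paper argues indirectly: since $\pi:\cB_D\to\cA_D$ is finite onto its image, the cotangent space of $\cB_D$ is a $3$-dimensional, $G$-stable, $\QQ$-rational subspace of $T^*_{\cA_D,P}$, and (because the nontrivial characters $\rho^{2i}$ occurring on the summands $S^2H^0(\omega_C\otimes\eta^i)$ form a single Galois orbit) the only such subspace is $\bigoplus_{i=1}^3 H^0(\omega_C\otimes\eta^i)\otimes H^0(\omega_C\otimes\eta^{7-i})$. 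You instead identify $T^*\cB_D$ directly as the $G$-invariant part of $S^2H^0(\omega_{\tC})^-$ via the deformation theory of the triple $(P,\Sigma,\sigma)$, and then read off the invariants from the character count $i+j\equiv 0 \pmod 7$. Your route is more self-contained: it produces the $3$-dimensionality of the cotangent space as an output of the character computation, whereas the paper's uniqueness argument takes $\dim\cB_D=3$ as an input (which in Proposition \ref{p2.1} is itself deduced partly from Section 6). The price is that you must justify the deformation-theoretic identification of $T\cB_D$ with the $\sigma$-fixed tangent directions --- note there is no global $G$-action on $\cA_D$, only the action of the given automorphism $\sigma$ on the local deformation space of $(P,\Sigma)$ --- but this is standard and you correctly flag it as the point needing care.
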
 

\begin{proof} 
(i): 
Consider the composed map 
$\cR_{g,7} \stackrel{\pr_{2,7}}{\lra} \cB_D \stackrel{\pi}{\lra} \cA_D$. The cotangent space of
the image of $[f:\tC \ra C]$ in $\cA_D$ is by definition the cotangent at the Prym variety $P$ of $f$. It is well known 
that the cotangent space $T^*_{P,0}$ at 0 is  
\begin{equation}  \label{eq5.1}
T^*_{P,0}  = H^0(\tC,\omega_{\tC})^- = \bigoplus_{i=1}^6 H^0(C, \omega_C \otimes \eta^i).
\end{equation}
According to \cite{lo} the cotangent space of $\cA_D$ at the point $P$ can be identified with the 
second symmetric product of $H^0(\tC,\omega_{\tC})^-$. This gives
\begin{equation}  \label{eq5.2}
T^*_{\cA_D,P} = \bigoplus_{i=1}^6 S^2 H^0(\omega_C \otimes \eta^i) \oplus \bigoplus_{i=1}^3
\left( H^0 (\omega_C \otimes \eta^i) \otimes H^0(\omega_C \otimes \eta^{7-i}) \right)
\end{equation}
Since the map $\pi: \cB_D \ra \cA_D$ is finite onto its image and the group $G$ acts on the cotangent space of $\cB_D$ at the point, we conclude that this space can be identified with a 
3-dimensional $G$-subspace of the $G$-space $T^*_{\cA_D,P}$ which is defined over the rationals. But there is only one such subspace, 
namely $\bigoplus_{i=1}^{3} \left( H^0(\omega_C \otimes \eta^{i}) \otimes H^0(\omega_C \otimes \eta^{7-i}) \right)$. This gives (i).

(ii): It is well known that the cotangent space of $\cR_{2,7}$ at a point $(C, \eta)$ without automorphism is given by $H^0(\omega_C^2)$ and the 
codifferential of $\pr_{2,7}: \cR_{2,7} \ra \cA_D$ at $(C,\eta)$ by the natural map $S^2(H^0(\tC, \omega_{\tC})^-) \lra H^0(\omega_C^2)$. 
The assertion follows immediately from Lemma \ref{lem5.1} (i) and equations \eqref{eq5.1} and \eqref{eq5.2}.
\end{proof}

\begin{thm} \label{thm5.2}
The map $\widetilde \pr_{2,7}: \widetilde \cR_{2,7} \ra \cB_D$ is surjective and hence of finite degree.
\end{thm}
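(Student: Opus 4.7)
The plan is to deduce surjectivity from dominance. Since $\widetilde{\pr}_{2,7}$ is proper by Theorem~\ref{thm4.1} and both $\widetilde{\cR}_{2,7}$ and $\cB_D$ are irreducible of dimension $3$ (Proposition~\ref{p2.1}), dominance will suffice. By Lemma~\ref{lem5.1}(ii), dominance is in turn equivalent to the existence of one $(C,\eta)\in \cR_{2,7}$ at which the codifferential
$$
\mu_{(C,\eta)}:\ \bigoplus_{i=1}^{3} H^0(\omega_C \otimes \eta^{i}) \otimes H^0(\omega_C \otimes \eta^{7-i}) \lra H^0(\omega_C^2)
$$
is injective. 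For $g=2$ and $\eta$ a nontrivial $7$-torsion line bundle, Riemann--Roch gives $h^0(\omega_C \otimes \eta^i)=1$, so the source is $3$-dimensional; as $h^0(\omega_C^2) = 3g-3=3$ the target has the same dimension, and injectivity, surjectivity, and being an isomorphism are all equivalent for $\mu_{(C,\eta)}$.

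To produce such a point I would specialize, as suggested in the introduction, to a boundary stratum of $\widetilde{\cR}_{2,7}$. A convenient candidate is an admissible cover of type (i) from Proposition~\ref{prop3.2}: take $C_0$ to be an irreducible nodal curve of arithmetic genus $2$ whose normalization $N$ is an elliptic curve with two marked points identified to form the node, and let $\widetilde{C}_0 \to C_0$ be the $7$-cycle $A_1 \cup \cdots \cup A_7$ of copies of $N$ glued cyclically along the two branches of the node. At such a point the twisted dualizing sheaves $\omega_{C_0} \otimes \eta_0^i$ admit a concrete description in terms of degree-$2$ line bundles on $N$ together with a gluing datum at the two preimages of the node, so that $\mu_{(C_0,\eta_0)}$ becomes an explicit linear map between two $3$-dimensional spaces. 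I would then check directly that, for a generic choice of $\eta_0$, the three products $s_1 s_6$, $s_2 s_5$, $s_3 s_4$ are linearly independent in $H^0(\omega_{C_0}^2)$, or equivalently that the three effective divisors $(s_i)+(s_{7-i})$ span the linear system $|2K_{C_0}|$.

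Once injectivity of $\mu$ is established at this boundary point, it propagates to nearby smooth points by lower semicontinuity of the rank of the family of multiplication maps, giving dominance of $\widetilde{\pr}_{2,7}$; properness together with the equality $\dim \widetilde{\cR}_{2,7} = \dim \cB_D = 3$ then upgrades dominance to surjectivity. The main obstacle will be twofold. First, one must extend the codifferential description of Lemma~\ref{lem5.1}(ii) across the boundary divisor in a manner compatible with admissible covers; this is presumably where the deformation-theoretic suggestions of Esteves acknowledged in the introduction are used. Second, one must carry out the explicit linear independence check for the three product sections at the specialized point, tracking the $G$-action on cohomology as well as the gluing conditions at the node. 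Once the correct identifications are set up, the second step should reduce to a concrete linear algebra computation on the elliptic normalization $N$.
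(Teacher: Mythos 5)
Your overall strategy coincides with the paper's: use properness (Theorem~\ref{thm4.1}) plus the equality of dimensions, reduce dominance to injectivity of the multiplication map $\mu$ at a single point of the compactification, and invoke semicontinuity of the rank to propagate this to the generic point. The gap is in your choice of degeneration. You propose to verify injectivity at a type~(i) boundary point, i.e.\ $C_0=E/(p\sim q)$ with elliptic normalization and $\widetilde C_0$ a cyclic chain of seven copies of $E$. But for such a cover to lie in $\widetilde\cR_{2,7}$ (equivalently, for condition $(**)$ to hold, $c_1=n_1$), the covering curve is forced to be the $7$-cycle of copies of $E$, its Prym is $X(E)=\ker(E^7\to E)$, and the paper's Proposition~\ref{codifDiv} computes that precisely at these points the map
$$
\phi:\ \bigoplus_{i=1}^{3} H^0(\omega_C \otimes \eta^{i}) \otimes H^0(\omega_C \otimes \eta^{7-i}) \lra H^0(\omega_C^2)
$$
has rank $1$, with a $2$-dimensional kernel cut out by $a+b+c=0$. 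So the ``concrete linear algebra computation'' you defer would come out negatively: the three products $s_1s_6$, $s_2s_5$, $s_3s_4$ are \emph{not} independent there. This degeneracy is not incidental --- it is exactly why Sections~7--9 of the paper must blow up along $\cS_1\cup\cS_2$ and compute local degrees ($2$ and $8$) instead of reading the degree off the fibre. Semicontinuity gives you nothing from a rank-$1$ specialization.

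The paper avoids this by degenerating further, to $X=Y\cup Z$ a union of two rational curves meeting in three nodes, a point of $\overline\cR_{2,7}$ at which the Prym map is not even defined (the Prym is a torus) but at which the multiplication map of sections still makes sense. There $h^0(\omega_X\otimes\eta_X^i)=1$, each section is identified with a linear form $a_ix+b_iy$ on $Y\simeq\PP^1$ determined by the gluing roots of unity $c_1,c_2$, and injectivity of $\mu$ reduces to the nonvanishing of an explicit $3\times 3$ determinant, which holds e.g.\ for $c_2=c_1^3$ or $c_1^5$. If you want to keep your architecture, you must either move your specialization to such a maximally degenerate curve, or work at a general smooth $(C,\eta)$ directly; the type~(i) stratum is the one place guaranteed to fail.
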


\begin{proof}
Since the extension $\widetilde \pr_{2,7}$ is proper according to Theorem \ref{thm4.1}, it suffices 
to show that the map $\pr_{2,7}$ is generically finite.
Now $\pr_{2,7}$ is generically finite as soon as its differential at the
generic point $[f:\tC \ra C] \in \cR_{2,7}$ is injective. 
Let $f$ be given by the 7-division point $\eta$.
According to Lemma \ref{lem5.1}, the codifferential of $\Prym_{2,7}$
at $[f: \tC \ra C]$ is given by (the sum of) the multiplication of sections
$$
\mu_{C,\eta} :    \oplus_{j=1}^3 H^0(C, \omega_C \otimes \eta^{j} )
\otimes  H^0(C, \omega_C \otimes \eta^{7-j} )  \lra H^0(C, \omega^2_C ).
$$
Since  $\overline \cR_{2,7}$ is irreducible and $\widetilde \cR_{2,7}$ is open and dense in $\overline \cR_{2,7}$,
it suffices to show that the map $\mu_{X,\eta}$ is surjective at a point 
$(X,\eta)$ in the compactification $\overline \cR_{2,7}$,  even if $\pr_{2,7}$ is not defined at $(X,\eta)$. 
So if $\mu_{X,\eta}$ is surjective at this point, it will be surjective 
at a general point of $\cR_{2,7}$. Moreover, it suffices to show that $\mu_{X,\eta}$ is injective, since 
both sides of the map are of dimension 3.

Consider the curve 
$$
X = Y \cup Z,
$$ 
the union of two rational curves intersecting in 3 points ${q_1, q_2, q_3}$ which we 
can assume to be $[1,0], [0,1], [1,1]$ respectively. A line bundle 
$\eta_X= (\eta_Y, \eta_Z)$ on $X$ of degree 0 is uniquely determined by the gluing of the fiber over the nodes $\cO_{Y_{|q_i}}
\stackrel{\cdot c_i}{\ra }  \cO_{Z|_{q_i}}$, given by the multiplication  by a non-zero constant $c_i$. We may assume 
$c_3=1$ and since $\eta_X^7\simeq \cO_X$ we have $c_1^7=c_2^7=1$.   Notice that $\omega_{X_{|Y}} = \cO_Y(1)$ and $\omega_{X_{|Z}} = \cO_Z(1)$
and the restrictions  $\eta_Y, \eta_Z$ are trivial line bundles. 

From the exact sequence
$$
0 \ra  \cO_Z(-2) \ra \omega_X\otimes \eta_X^i \stackrel{\beta_i}{\ra} \cO_Y(1) \ra 0
$$
follows that $h^0(X, \omega_X \otimes \eta_X^i)=1 $ for $i=1, \dots, 6$. Moreover, since $H^0(Z, \cO_Z(-2))=0$, the map $\beta_i$
induces an inclusion  $H^0(X, \omega_X \otimes \eta_X^i) \hookrightarrow H^0(Y, \cO_Y(1))$ for $i=1, \dots, 6$. 

Therefore,  to 
study the injectivity of the map $\mu_{X,\eta}$,  it is enough to check whether the projection of $\oplus_{i=1}^{3}  H^0(X, \omega_X \otimes \eta_X^i) \otimes
H^0(X, \omega_X \otimes \eta_X^{7-i}) $ to 
$H^0(Y, \cO_Y(1)) \otimes H^0(Y, \cO_Y(1))$ is contained in the kernel of the multiplication map 
$$
H^0(Y, \cO_Y(1))  \otimes  H^0(Y, \cO_Y(1)) \lra  H^0(Y, \cO_Y(2)).
$$

We claim that the line bundle $\omega_X= (\omega_{|Y}, \omega_{|Z})$ is uniquely determined and 
one can choose the gluing $c_i$ at the nodes $q_i$ to be the multiplication by the same constant. To see this, first 
notice that, since $(X, \omega_X)$ is a limit linear series of canonical line bundles,  the nodes of $X$ are 
necessary Weierstrass points of $X$. Let $s_3 \in H^0(X, \omega_X(-2q_3)) $ be a section 
giving a trivialization of $\omega_Y$ and $\omega_Z$ away from $q_3$. For $i=1,2$, we have 
$\cO_Y (1)_{|q_i} \stackrel{s_3^{-1}}{\ra} {\cO_X}_{|q_i} \stackrel{s_3}{\ra} \cO_Z(1)_{|q_i}$, which implies that
$c_1=c_2$. Similarly, by using a section  in $ H^0(X, \omega_X(-2q_2)) $ one shows that $c_1=c_3$.
 
A section of  $\omega_{X_{|Y}} \otimes \eta_Y^i  \simeq \cO_Y (1)$ for $i=1,2,3$ is of the form $f_i(x,y)= a_ix + b_i y$, with $a_i, b_i $
constants. Suppose that the sections $f_i$ are in the image of the inclusion
$$
H^0(X, \omega_X \otimes \eta_X^i) \hookrightarrow  H^0(Y, \cO_Y(1)).
$$
 By evaluating the section at the points $q_i$ and using the gluing conditions one gets $a_i=c_1^i-1$
and $b_i = c_2^i-1$.  One obtains a similar condition for the image of the sections of
$H^0(X, \omega_{X} \otimes \eta^{7-i}_X )$ in $H^0(Y, \cO_Y(1))$.
Set $j=7-i$.  By multiplying the corresponding sections of $ \omega_X \otimes \eta_X^i$ and  $\omega_X \otimes \eta_X^j$ we have that an element
in the image of  $\mu_{X,\eta}$ is of the form  
$$
(2-c_1^i-c_1^j) x^2 +  (2-c_2^i-c_2^j) y^2 - (2- c_1^j- c_2^i + c_1^jc_2^i - c_1^i - c_2^j  + c_1^ic_2^j )xy.
$$
Hence, after taking the sum of such sections  for $i=1,2,3$ we conclude that there is a non-trivial element in the kernel of $\mu_{X,\eta}$ if and only if 
there is a non-trivial solution  for the linear system $A{\bf x} =0 $ with
$$
A= \left(
\begin{array}{ccc}
2-c_1-c_1^6 & 2- c_1c_2^6 -c_1^6c_2  & 2-c_2-c_2^6 \\
2-c_1^2-c_1^5 & 2- c_1^2c_2^5 -c_1^5c_2^2  & 2-c_2^2-c_2^5 \\
2-c_1^3-c_1^4 & 2- c_1^3c_2^4 -c_1^4c_2^3  & 2-c_2^3-c_2^4 
\end{array}
\right)
 $$ 
Clearly, if $c_i =1$ for some $i$ or  $c_1=c_2$ the determinant of $A$ vanishes. 
We compute
$$
\frac{1}{7}\det A= c_1^6(c_2^3 -c_2^5) +  c_1^5(c_2^6 -c_2^3) + c_1^4(c_2^2 -c_2) +  c_1^3(c_2^5 -c_2^6) + c_1^2(c_2 -c_2^4) +  
c_1(c_2^4 -c_2^2).
$$
Suppose that $c_i \neq 1$ and $c_2 = c_1^k$ for some $2\leq k\leq 6$.  Then a straightforward computation shows that $\det A \neq 0 $ 
if and only if $k=3$ or $k=5$. 

In conclusion, we can find a limit linear series $(X, \eta_X)$ with $\eta_X^7 \simeq \cO_X$, for suitable values of the $c_i$, 
such that the composition map in the commutative diagram
$$
\xymatrix{
\oplus_{i=1}^{3 } H^0(X, \omega_X \otimes \eta^{i} )\otimes  H^0(X, \omega_X \otimes \eta^{7-i} )\ar[r] \ar@{^(->}[d] &   
H^0(X, \omega^2_X ) \ar[d]^{\simeq} \\
H^0(Y, \cO_Y(1))  \otimes  H^0(Y, \cO_Y(1)) \ar[r]  & H^0(Y, \cO_Y(2))
}
$$ 
is an isomorphism.  
\end{proof}


\section{A complete fibre of $\widetilde \pr_{2,7}$}


For a special point of $\cB_D$ consider a smooth curve $E$ of genus 1. Then the kernel of the 
addition map
$$
X = X(E):= \ker (m: E^7 \ra E) \quad  \mbox{with} \quad m(x_1, \dots,x_7) = x_1 + \dots + x_7
$$ 
is an 
abelian variety of dimension 6, isomorphic to $E^6$. The kernel of the induced polarization of the
canonical principal polarization of $E^7$ is $\{ (x,\dots,x) : x \in E_7 \}$ which consists of $7^2$ 
elements. So the polarization on $X$ induced by the canonical polarization of $E^7$ is of type 
$D = (1,1,1,1,1,7)$.
Since the symmetric group $\cS_7$ acts on $E^7$ in the obvious way, 
$X$ admits an automorphism of 
order 7. Hence $X$ with the induced polarization is an element of $\cB_D$. 
To be more precise, the group $\cS_7$ admits exactly 120 subgroups of order 7. Hence to every elliptic curves there exist exactly 
120 abelian varieties $X$ as above with $G$-action. All of them are isomorphic to each other, since the corresponding subgroups 
are conjugate to each other according the Sylow theorems.
We want to determine the 
complete preimage $\widetilde \pr_{2,7}^{-1}(X)$ of $X$. We need some lemmas. For simplicity we denote $ \pr(f)$ the 
Prym variety of a covering $f$ in $\widetilde \cR_{2,7}$.

\begin{lem} \label{L6.1}
Let $f: \widetilde C \ra C$ be a covering satisfying $(**)$ with $g = 2$ such that $ \pr(f) \simeq X$. 
Then $C$ contains a node of index $1$.
\end{lem}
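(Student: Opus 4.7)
The plan is to argue by contrapositive. Assume $C$ has no node of index $1$, so $n_1 = 0$. Condition $(**)$ then forces $c_1 = n_1 = 0$, meaning every irreducible component of $\widetilde C$ is fixed setwise by $\sigma$; in the notation of Section 3 this is the situation $\widetilde C = B$, with every node of $\widetilde C$ of index $7$.

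Next, I would pass to the normalization. The induced cover $\widetilde f\colon \widetilde N \to N$ is cyclic of degree $7$, totally ramified at the $2n_7$ preimages of the nodes of $C$. Since $p_a(C) = 2$, Riemann-Hurwitz allows only the three admissible configurations $(g(N), n_7) \in \{(2,0),(1,1),(0,2)\}$, giving $g(\widetilde N) \in \{8,7,6\}$ respectively. From the commutative diagram \eqref{eq2.1}, the map $\pr(f) \hookrightarrow J\widetilde C \twoheadrightarrow J\widetilde N$ lands in the classical Prym $P_0 := \pr(\widetilde f) \subset J\widetilde N$ with finite kernel contained in the toric part $\widetilde T$ of dimension $n_7$, so $\pr(f)$ is isogenous to $P_0$. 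Because $X \simeq E^6$ literally (by projecting $X \subset E^7 \twoheadrightarrow E^6$ onto any six of the seven coordinates), the hypothesis $\pr(f) \simeq X$ becomes an isogeny $P_0 \sim E^6$.

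The crux---and main obstacle---is to rule out $P_0 \sim E^6$ in each of the three sub-cases, yielding the required contradiction. The strategy is to exploit the $G$-action on $P_0$: the embedding $\QQ(\rho) \hookrightarrow \End_{\QQ} P_0$ makes $H_1(P_0, \QQ)$ a free $\QQ(\rho)$-module of rank $2$, whose Hodge decomposition is pinned down by Chevalley-Weil in terms of the local characters $\rho^{\pm a}$ at the ramification of $\widetilde f$. An isogeny $P_0 \sim E^6$ would force a compatible embedding $\QQ(\rho) \hookrightarrow M_6(\End_{\QQ} E)$, and would have to respect simultaneously the description of $\ker \Sigma \subset P_0[7]$ from the proof of Proposition \ref{prop2.3} (in terms of the pullback $f^*\colon JC[7] \to J\widetilde N$) and the rigid structure $\ker \Xi_X = \mathrm{diag}(E[7]) \subset X[7]$ inherited from the product polarization on $E^7$. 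Showing that this combined compatibility fails in each sub-case---most delicately when $(g(N), n_7) = (0,2)$, where $\widetilde N$ is a cyclic $7$-cover of $\PP^1$ branched at only four points with complementary characters and can approach CM behaviour---is the crucial calculation.
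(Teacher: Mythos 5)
Your reduction is set up sensibly, but the proposal has a genuine gap: the step you yourself flag as ``the crux --- and main obstacle'' is exactly the content of the lemma, and it is left entirely undone. Worse, the tools you propose for it do not look strong enough. The Chevalley--Weil/signature data of the $G$-action on $P_0$ is $(r_\nu,s_\nu)=(1,1)$ at each of the three real places of $\QQ(\rho+\rho^{-1})$ --- for the \'etale case this is just the decomposition $H^0(\omega_{\widetilde C})^-=\oplus_{i=1}^6 H^0(\omega_C\otimes\eta^i)$ with each summand one-dimensional --- and the analytic representation of $\sigma$ on $X=\ker(m\colon E^7\to E)$ has eigenvalues $\rho,\dots,\rho^6$ each with multiplicity one, hence exactly the same signature. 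So the $\QQ(\rho)$-module structure of $H_1(P_0,\QQ)$ and its Hodge decomposition cannot distinguish a generic Prym of this type from $X$; indeed $\QQ(\rho)$ embeds in $M_6(\QQ)$, so there is no obstruction at the level of $\End_\QQ$ either. You would have to carry out the polarization-kernel compatibility in full, and you give no indication of how that computation closes. There is also a smaller gap in the case division: assuming $n_1=0$ does not make $C$ irreducible, and you omit the configuration $C=E_1\cup_p E_2$ with the single node of index $7$ (then $\widetilde N$ is disconnected, two genus-$4$ curves totally ramified over $p$), which is not among your three pairs $(g(N),n_7)$.

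For comparison, the paper avoids all of this by using the extra symmetry of the target rather than the $G$-action alone: since $X=\ker(m\colon E^7\to E)$ carries an action of the full symmetric group $\cS_7$, an isomorphism $\pr(f)\simeq X$ transports this action through the isogeny $j\colon\pr(f)\to\pr(\widetilde f)$ (whose kernel is the diagonal $\{(x,\dots,x)\}$, hence $\cS_7$-stable) and then extends it to $J\widetilde N$ via $J\widetilde N\simeq(JN\times\pr(\widetilde f))/H$. This forces $\#\Aut(J\widetilde N_i)\geq \tfrac12\,\#\cS_7=2520$ for some component, while Weil's theorem and the Hurwitz bound give $\#\Aut(\widetilde N_i)\leq 84\cdot 7=588$, a contradiction. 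That argument is uniform over all sub-cases (smooth $C$, irreducible or reducible $C$ with only index-$7$ nodes) and requires no case-by-case endomorphism analysis. If you want to salvage your route, you would need to replace the Hodge-theoretic criterion by something that genuinely sees the difference between $X$ and a generic $(1,\dots,1,7)$-polarized $G$-sixfold; the $\cS_7$-symmetry is precisely such an invariant.
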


\begin{proof}
Suppose that either $C$ is smooth or all the nodes of $C$ are of index 7. 
Then the  exact sequence \eqref{eq2.2} gives an isogeny $j: \pr(f) \ra \pr(\widetilde f)$ onto the Prym 
variety  of the normalization $\widetilde f$ of $f$.  Actually, in the smooth case $j$ is an isomorphism and, 
if there is a node of index $1$, then the kernel $\widetilde{T}_7$ would be positive dimensional.   
The isomorphism $\pr(f) = X$ implies that the kernel of $j$ is of the form $\{(x, \dots,x)\}$ with $x \in X_7$. 
Hence the action of the symmetric  group $S_7$
on $X$ descends to a non-trivial action on $ \pr(\widetilde f)$. 

We can extend this action to $J \widetilde N$ by combining it with the identity on $JN$. Namely, 
$J \widetilde N \simeq (JN \times  \pr(\widetilde f)/H$ where $H$ is constructed as follows. Let 
$\langle \eta \rangle \subset JN_7$ be the subgroup defining the covering $\widetilde f$ and
$H_1 \subset JN_7$ be its orthogonal complement with respect to the Weil pairing. Then
$H = \{(\alpha,-f^*\alpha): \alpha \in H_1\}$. Since $f^*H_1 = \{(x,\dots,x): x \in E_7 \} \subset 
 \pr(f)$, we get an $S_7$-action on $J \widetilde N$ which is clearly  non-trivial.

If $C$ is smooth $\widetilde{N} \simeq \widetilde{C}$. On the other hand, if all the nodes of $C$ are of 
index 7, $\widetilde{N}$ consists of at least two components.  In any case, for each component  $\widetilde{N}_i$
of $\widetilde{N}$ we have 
$$
\# \Aut(J\widetilde N_i) \geq \frac{1}{2} \# S_7 = 2520.
$$

Moreover,  according to a classical theorem of Weil, $\Aut \widetilde N_i$ embeds into $\Aut 
J\widetilde N_i$ with quotient of order $\leq 2$. So
$$
\# \Aut \widetilde N_i \geq \frac{1}{2} \# \Aut (J \widetilde N_i).
$$
On the other hand, $\widetilde N_i$ is a smooth curve of genus $\leq 8$. So Hurwitz' theorem implies 
that 
$$
\# \Aut (\widetilde N_i) \leq 84 \cdot (8-1) = 588.
$$
Together, this gives a contradiction.
\end{proof}

\begin{lem}  \label{L6.2}
Let $f:\widetilde C \ra C$ be a covering satisfying $(**)$ such that $C$ has a component 
containing nodes of index $1$ and $7$. Then any node of index $1$ is the intersection with
another component of $C$.
\end{lem}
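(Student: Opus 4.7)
\emph{Proof plan.} I would combine Lemma~\ref{lem3.1} (which gives $c_1=n_1$) with a sharper version of the node count from the proof of Proposition~\ref{prop3.2}. The hypothesis on the existence of a node of index~$7$ will be used only to place us in case~(ii) of Proposition~\ref{prop3.2}.

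If $p\in C$ has index $7$, its unique preimage $\widetilde p\in\widetilde C$ is a fixed node at which, by the local form in $(**)$, $\sigma$ acts by $(\rho^i,\rho^{-i})$; thus each branch at $\widetilde p$ is $\sigma$-stable, so both irreducible components of $\widetilde C$ through $\widetilde p$ are $\sigma$-invariant and hence lie in $B$. In particular $B\neq\emptyset$, so we are in case~(ii) of Proposition~\ref{prop3.2}.

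I would then refine the enumeration of non-fixed nodes of $\widetilde C$ by recording which of the subsets $A_1,\dots,A_7,B$ contains each of the two branches. Passing to $\sigma$-orbits yields
$$
n_1 \;=\; n_{A_1}+(t_1+t_2+t_3)+m+n_1^B,
$$
where $t_d:=\#(A_1\cap A_{1+d})$ for $d=1,2,3$ and $n_1^B$ is the number of $\sigma$-orbits of non-fixed nodes whose two branches both lie in $B$. Combining $c_1=i_{A_1}$ with the tree inequality $n_{A_1}\geq i_{A_1}-c_{A_1}$ (equality iff every connected component of $A_1$ is tree-like with smooth irreducible components) and with $m\geq c_{A_1}$ (each connected component of $A_1$ meets $B$), the identity $c_1=n_1$ forces
$$
i_{A_1}=n_1\;\geq\;(i_{A_1}-c_{A_1})+0+c_{A_1}+0\;=\;i_{A_1},
$$
so every intermediate inequality is an equality. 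Hence $A_1$ is tree-like with smooth components, $t_1=t_2=t_3=0$ (i.e.\ $A_i\cap A_j=\emptyset$ for all $i\neq j$), and $n_1^B=0$.

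Now assume toward a contradiction that some index-$1$ node $q\in C$ is a self-node of an irreducible component $C_0\subset C$, so each of its seven preimages in $\widetilde C$ has both branches in $f^{-1}(C_0)$. If $C_0$ is of ``case~A'' type, then $f^{-1}(C_0)=\alpha\sqcup\sigma\alpha\sqcup\cdots\sqcup\sigma^6\alpha$ with $\alpha\subset A_1$ smooth, and each preimage of $q$ must be either a self-node of some $\sigma^i\alpha$ (ruled out by smoothness) or an intersection in $A_p\cap A_{p'}$ with $p\neq p'$ (ruled out by $t_d=0$). If instead $C_0$ is of ``case~B'' type, so $f^{-1}(C_0)=B_0\subset B$, the seven preimages form a $\sigma$-orbit of self-nodes of $B_0$, i.e.\ non-fixed nodes contained in $B$, contradicting $n_1^B=0$. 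Either alternative is impossible, which proves the lemma. The main obstacle is this combinatorial sharpening of Proposition~\ref{prop3.2}: the proposition asserts only $A_i\cap A_{i+1}=\emptyset$, whereas to rule out self-nodes one needs $A_i\cap A_j=\emptyset$ for \emph{every} $i\neq j$ together with the absence of moving nodes inside $B$, both of which must be squeezed out of $c_1=n_1$ using the tree and $B$-contact inequalities.
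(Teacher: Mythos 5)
Your strategy is essentially the paper's own, made explicit: the paper's proof is two sentences --- total ramification over the index-$7$ node makes $f^{-1}(C_i)$ a single $\sigma$-invariant component, and then ``$n_1=c_1$ implies $x$ is the intersection of two components'' --- where the second sentence silently relies on exactly the bookkeeping you write out, in particular on the assertion (stated without proof in the proof of Proposition \ref{prop3.2}) that $B$ contains no moving nodes, which is your $n_1^B=0$. Your refined count $n_1=n_{A_1}+(t_1+t_2+t_3)+m+n_1^B$, the reduction of the lemma to the vanishing of self-nodes of non-invariant components, of $t_1+t_2+t_3$ and of $n_1^B$, and the final two-case analysis are all correct and are the right way to fill in the paper's terse step.

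There is, however, one step that fails as written: the inequality $m\geq c_{A_1}$, which you justify by ``each connected component of $A_1$ meets $B$.'' This is not true for an arbitrary choice of the orbit representatives $A_1$, and without it your squeeze does not close. Concretely, take two orbits of smooth non-invariant components represented by $X$ and $Y$, with $X$ meeting $\sigma Y$ in one node and $B$ in one node, and $Y$ meeting nothing besides $\sigma^{6}X$; one checks that $r=0$ and $c_1=n_1=2$, so $(**)$ can be satisfied, yet for the choice $A_1=\{X,Y\}$ one has $c_{A_1}=2$, $m=1$ and $t_1=1$. Thus both the inequality $m\geq c_{A_1}$ and your derived conclusion $t_1=t_2=t_3=0$ are false for this choice of $A_1$ (Proposition \ref{prop3.2}(ii) must likewise be read as a statement about a suitably chosen $A_1$). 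The repair is to fix a good choice first: for each connected component of $\overline{\widetilde C\setminus B}$ whose $\sigma$-orbit has length $7$ put the whole component into $A_1$, and for each $\sigma$-invariant one take a connected lift of a spanning tree of the quotient dual graph (as in Lemma \ref{lem3.3}), translated by a power of $\sigma$ so as to contain a component touching $B$. For such an $A_1$ every connected component does meet $B$, the squeeze closes, and since the three quantities you actually need to kill (self-nodes of non-invariant components, orbits of nodes joining two components of a single $\sigma$-orbit, and moving nodes inside $B$) are bounded above by $n_{A_1}-(i_{A_1}-c_{A_1})$, by $t_1+t_2+t_3$ and by $n_1^B$ for \emph{every} choice of $A_1$, their vanishing for one good choice gives the choice-independent statement, and your concluding case analysis then proves the lemma.
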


\begin{proof}
Suppose $x$ and $y$ are nodes of $C$ in a component $C_i$
of index 1  and 7 respectively. Then the preimage $f^{-1}(C_i)$ is a component, since over $y$
the map $f$ is totally ramified. Since $f^{-1}(x)$ consists of $7$ nodes, the equality $n_1 = c_1$
implies that $x$ is the intersection of 2 components. 
\end{proof}

\begin{thm} \label{thm6.3}
Let $X = \ker(m: E^7 \ra E)$ be a polarized abelian variety as above. The fibre $\widetilde \pr_{2,7}^{-1}(X)$ consists of the following $4$ types 
of elements of $\widetilde \cR_{2,7}$.

{\em (i)} $C = E/{p \sim q}$ and $\widetilde C = \sqcup _{i=1}^7 E_i /{p_i \sim q_{i+1}}$
with $E_i \simeq E$ for all $i$ and $q_8 = q_1$
and we can enumerate in such a way that the preimages 
of $p$ and $q$  are $p_i, q_i \in E_i$. 

{\em (ii)} $C = E_1 \cup_p E_2$ consists of $2$ elliptic curves intersecting in one point $p$.
Then up to exchanging $E_1$ and $E_2$ we have: $\widetilde C$ consists of an elliptic curve $F_1$, 
which is a 7-fold cover of $E_1$ and 7 copies of $E_2 \simeq E$ not intersecting each other and 
intersecting $F_1$ each in one point. 

{\em (iii)} $C = E_1 \cup_p E_2$ with $E_2$ elliptic and $E_1$ rational with a node at $q$.
Then $E_2 \simeq E$ and $f^{-1}(E_2)$ consists of $7$ disjoint curves all isomorphic to $E$ and 
$f^{-1}(E_1)$ is a rational curve with one node lying $7:1$ over $E_1$ and intersecting each 
component of $f^{-1}(E_2)$ in a point over $p$.  

{\em (iv)} $C = E_1 \cup_p E_2$ as in {\em (iii)} and $\widetilde C$ and \'etale $G$-cover over $C$.
\end{thm}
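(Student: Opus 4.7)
The plan is to determine the fibre $\widetilde{\pr}_{2,7}^{-1}(X)$ by a systematic case analysis on the combinatorial type of $C$, combined with the structural results of Section~3.

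First, by Lemma~\ref{L6.1}, the curve $C$ must be singular with at least one node of index~$1$. Since $p_a(C) = 2$, I would then enumerate the candidate stable models: (a) irreducible with one self-node; (b) irreducible with two self-nodes; (c) two elliptic curves meeting at one point; (d) an elliptic and a rational nodal meeting at one point; (e) two rational nodal meeting at one point; (f) two smooth rationals meeting at three points; together with a finite list of configurations having more than two components. For each candidate $C$ and each compatible assignment of indices to the nodes (restricted by Lemma~\ref{L6.2}), Riemann--Hurwitz applied on each irreducible component would list the possible admissible covers, and the criterion $c_1 = n_1$ from Lemma~\ref{lem3.1} (equivalent to $(**)$) would test validity. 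This should eliminate (b), (e), (f) and the many-component cases, and leave a unique valid cover in (a) (matching type~(i)), a unique cover in (c) up to exchanging the two elliptic curves (matching type~(ii)), and exactly two covers in (d): the one with the self-node of $E_1$ of index~$7$ giving type~(iii), and the \'etale one giving type~(iv).

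For each of the four surviving covers, one reads off the decomposition $\widetilde{C} = A_1 \cup \cdots \cup A_7 \cup B$ from Proposition~\ref{prop3.2} and applies the last theorem of Section~3 to obtain $(P,\Sigma) \cong \ker((JA_1)^7 \to JA_1) \times Q$, where $Q$ is the generalized Prym of $B \to B/\sigma$. In each type $A_1$ contains a unique elliptic component giving $JA_1 \cong E$, and $Q = 0$ (either $B = \emptyset$ in (i) and (iv), or $B \to B/\sigma$ is a degree-$7$ isogeny between two elliptic curves in (ii) or between two rational nodal curves in (iii), both of which have trivial Prym), so $(P,\Sigma) \cong X$ in each case. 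That no other covers occur will follow from the $G$-eigenspace decomposition of $T_0 X$ into six one-dimensional eigenspaces for $\rho, \ldots, \rho^6$, which forces $\dim JA_1 = 1$ and $Q = 0$, ruling out the alternative $\dim JA_1 = 0$, $\dim Q = 6$. The hard part will be this thorough case elimination, where Riemann--Hurwitz, admissibility at nodes of index~$7$, the criterion $c_1 = n_1$, and the eigenspace constraint all need to be combined simultaneously.
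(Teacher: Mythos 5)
Your proposal follows essentially the same route as the paper's proof: a case-by-case enumeration of the seven topological types of stable genus-2 curves, with Lemmas \ref{L6.1} and \ref{L6.2} constraining the node indices and condition $(**)$ (via the equivalent criterion $c_1=n_1$ of Lemma \ref{lem3.1}, which the paper invokes as ``$P$ must be an abelian variety'') eliminating the all-rational and connected-preimage configurations. The only cosmetic difference is that you make the final identification $(P,\Sigma)\simeq X$ explicit through the structure theorem of Section 3, where the paper argues this more briefly.
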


We call the coverings of the theorem {\it of type} (i) , (ii), (iii) and (iv) respectively.

\begin{proof}
There are 7 types of stable curves of genus 2. We determine the coverings $f:\widetilde C \ra C$
in $ \widetilde \pr_{2,7}^{-1}(X)$ in each case separately. 

1) There is no \'etale $G$-cover $f:\widetilde C \ra C$ of a smooth curve $C$ of genus $2$ such that
$P = \Prym(f)  \simeq X.$ This is a direct consequence of Lemma \ref{L6.1}.

2) If $C = E/{p\sim q}$ 
then the singular point of $C$ is of index 1 and $f: \widetilde C \ra C$ a $G$-covering satisfying 
$(**)$ such that $P = \Prym(f) \simeq X$. Then
$$
\widetilde C = \sqcup _{i=1}^7 E_i /({p_i \sim q_{i+1}})
$$
with $E_i \simeq E$ and we enumerate in such a way that the preimages 
of $p$ and $q$  are $p_i$ and $q_i$ with
 $q_8 = q_1$. In this case $\Prym(f) \simeq X$.

{\it Proof:}
According to Lemma \ref{L6.1} the node is necessarily of index 1 and thus the map 
$\widetilde f: \widetilde C \ra C$ is \'etale.
The exact sequence \eqref{eq2.2} gives an isomorphism $P \simeq R$ with $R$ the Prym variety
of the map  $\widetilde f$. Clearly we can enumerate the components of $\widetilde N$ in such a way 
$\widetilde C$ is as above and $R$ is the kernel of 
the map $m: \times_{i=1}^7 E \ra E$, i.e. $R \simeq X$. We are in case (i) of the theorem.

3) There is no rational curve $C$ with $2$ nodes admitting a $G$-cover $f: \widetilde C \ra C$ 
satisfying $(**)$ such that $P = \Prym(f) \simeq X$.

{\it Proof:}
Suppose there is such a covering. By Lemmas \ref{L6.1} and \ref{L6.2} both nodes are of index 1 
and hence the map $\widetilde C \ra C$ is \'etale. Then all components 
of $\widetilde C$ are rational. This implies that $P \simeq {\CC^*}^6$ is not an abelian variety,
a contradiction.

4) Let $C = E_1 \cup_p E_2$ consist of $2$ elliptic curves intersecting in one point and 
$f: \widetilde C \ra C$ be covering satisfying $(**)$ such that $P = \Prym(f) \simeq X$.
Then, up to exchanging $E_1$ and $E_2$, we have that $\widetilde C$ consists of an elliptic curve $F_1$
which is a 7-fold cover of $E_1$, and 7 copies of $E_2 \simeq E$ not intersecting each other and intersecting $F_1$, 
each in one point. So $X = \ker(m: E_2^7 \ra E_2)$.

{\it Proof:}
By Lemma \ref{L6.1} the node is of index 1 and the map $f:\widetilde C \ra C$ is \'etale. 
since there is no
connected graph with 14 vertices and 7 edges,  we are necessarily in case (ii) of the theorem.

5) Suppose $C = E_1 \cup_p E_2$ with components $E_2$ elliptic and $E_1$ rational
with a node $q$ and $f: \widetilde C \ra C$ a $G$-covering satisfying 
$(**)$ such that $P = \Prym(f) \simeq X$.  Then $E_2 \simeq E$ and either $f: \widetilde C \ra C$ is \'etale and connected or 
$\widetilde C$ consists of 7 components all isomorphic to $E$
and a rational component $F_2$ over $E_2$ totally ramified exactly over $q$ and intersecting each
$E_i$ exactly in one point lying over $p$. So $\pr(f) \simeq X$.

{\it Proof:}
According to Lemma \ref{L6.1} at least one node of $C$ is of index 1.  
Suppose first that both nodes are of index 1. Then clearly $f$ is \'etale and we are in case (iv) 
of the theorem.
If only one node is of index 1, then according to Lemma \ref{L6.2}, $q$ is of index 7 and $p$ of index 1. 
This gives case (iii) of the theorem.

6) There is no curve $C$ consisting of 2 rational components intersecting in one point $p$ admitting a $G$-cover
$f: \widetilde C \ra C$ satisfying $(**)$ such that $P = \Prym(f) \simeq X$.

{\it Proof:}
According to Lemmas \ref{L6.1} and \ref{L6.2} the node $p$ is of index 1 and the nodes $q_1$ and $q_2$ of the rational 
components of $C$ are of index 1 or 7. By the Hurwitz formula 
all components of $\tC$ are rational. This implies $\pr(f) \simeq {\CC^*}^6$ contradicting $(**)$.

7) If $C$ is the union of 2 rational curves intersecting in 3 points, there is no cover $f: \widetilde C \ra C$ satisfying $(**)$
such that $P = \Prym(f) \simeq X$.

{\it Proof:}
By Lemma \ref{L6.1} at least one of the 3 nodes of $C$ is of index 1. So $\widetilde C$ consists of 
at least 8 components.  But then the other nodes also 
are of index 1, because if one node is of index 7, the curve $\widetilde C$ consists of 2 components
only. 
Hence all 3 nodes  nodes are of index 1. But then all components of $\widetilde C$ are rational. So 
$P=\pr(f)$ cannot be an abelian variety contradicting $(**)$.

Together, steps 1) to 7) prove the theorem.
\end{proof}

\begin{figure}
\begin{center}
\includegraphics[width=12.7cm, height=10cm]{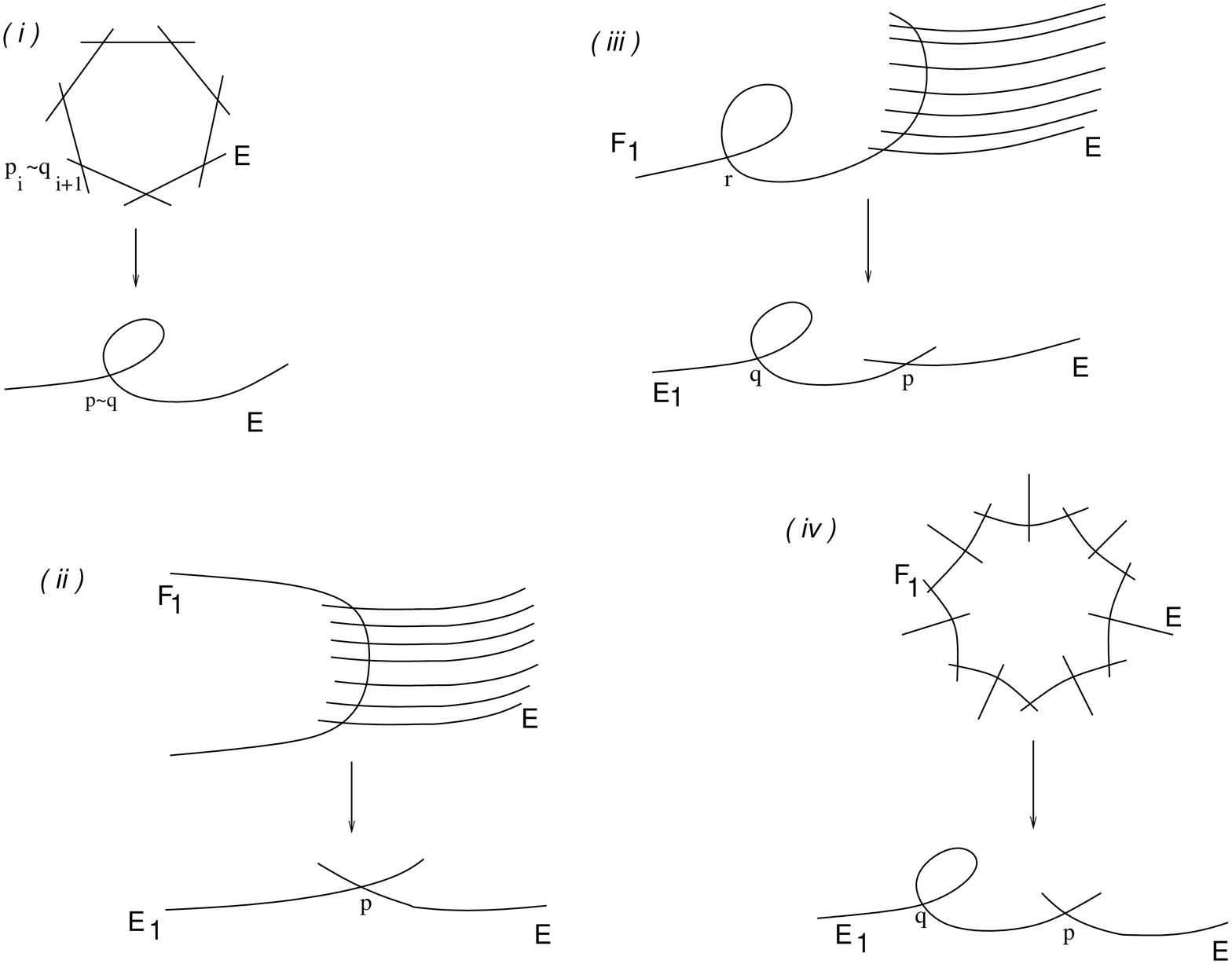}
\caption{Admissible coverings on the fiber of $X(E)$}
\end{center}

\end{figure}

Varying the elliptic curve $E$, we obtain a one dimensional locus $\cE \subset \cB_D$ consisting of 
the polarized abelian varieties $X(E)$ with  $G$-action as above. Let $\cS$ denote the preimage of $\cE$ under the extended 
Prym map $\widetilde \pr_{2,7}: \widetilde \cR_{2,7} \ra \cB_D$.

\begin{prop} \label{prop6.4}
The scheme $\cS$ is the union of $2$ closed subschemes
$$
\cS = \cS_1 \cup \cS_2,
$$
where $\cS_1$ (respectively $\cS_2)$ parametrizes coverings of type {\em (i)} and {\em (iv)} 
(respectively of types {\em (ii), (iii)} and {\em (iv)}). In particular they intersect exactly in the points 
parametrizing coverings of type {\em (iv)}.
For a general elliptic curve $E$ and $f$  a covering in $\cS$ with 
$X(E) = \widetilde \pr_{2,7}(f)$ we have 

If $f$ is of type {\em (i)}, then $\widetilde \pr_{2,7}^{-1}(X(E))$ is isomorphic to a finite covering of $E \setminus p$ where 
$p$ maps to the singular point of $C$. 

If $f$ is of type {\em (ii)}, then $\widetilde \pr_{2,7}^{-1}(X(E))$ is isomorphic to a finite covering of the moduli space of elliptic curves. 

There is only one covering $f$  of type {\em (iii)} and {\em (iv)} in the fiber $\widetilde \pr_{2,7}^{-1}(X(E))$.

\end{prop}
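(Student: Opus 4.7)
The plan is to combine Theorem \ref{thm6.3} with a degeneration analysis of the four types of coverings. By Theorem \ref{thm6.3} every point of $\cS = \widetilde\pr_{2,7}^{-1}(\cE)$ corresponds to an admissible $G$-cover of one of the four types (i)--(iv), so $\cS$ is stratified by four locally closed strata. Counting moduli, the type (i) stratum is $2$-dimensional, parametrized by $E$ together with the point $q-p \in E \setminus \{0\}$; the type (ii) stratum is $2$-dimensional, parametrized by $E_2 \simeq E$ and the free elliptic component $E_1$; and the type (iii) and type (iv) strata are each $1$-dimensional, parametrized only by $E$. I would then define $\cS_1$ and $\cS_2$ to be the Zariski closures in $\widetilde\cR_{2,7}$ of the type (i) and type (ii) strata respectively, so that $\cS_1$ and $\cS_2$ are the two $2$-dimensional components of $\cS$.

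The next step is the degeneration analysis. Letting $p \to q$ in a type (i) family $C_t = E/(p_t \sim q_t)$, the limit family becomes unstable and its stable reduction replaces the contracted identification by a rational nodal tail $E_1$ attached to $E_2 \simeq E$, giving exactly the base curve of type (iv). On the admissible-cover level, the cycle $\sqcup_{i=1}^7 E_i/(p_i \sim q_{i+1})$ degenerates into $7$ disjoint copies of $E_2$ glued to a $7$-cycle of $\mathbb P^1$'s over $E_1$, which is precisely the type (iv) cover. For $\cS_2$, degenerating $E_1$ to a rational nodal curve produces either a node of index $1$ (so that the cover remains \'etale and the limit is type (iv)) or a node of index $7$ (so that the cover acquires total ramification there and the limit is type (iii)). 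On the other hand, type (iii) cannot appear in the closure of the type (i) stratum, since along type (i) degenerations the normalization of the base remains irreducible, while in type (iii) the base has an elliptic component distinct from the rational nodal component. This yields $\cS = \cS_1 \cup \cS_2$ with intersection equal to the type (iv) locus.

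The assertions about the fibre $\widetilde\pr_{2,7}^{-1}(X(E))$ over a general $E$ then follow by restricting these strata to a fixed $E$: the type (i) contribution is parametrized by $q-p \in E \setminus \{p\}$ together with finitely many choices of cyclic $G$-cover structure realizing Prym $X(E)$ (classified by Lemma \ref{lem2.1} via the $\CC^*$-part of the generalized Jacobian $JC$), giving a finite cover of $E \setminus \{p\}$; the type (ii) contribution is parametrized by the free elliptic component $E_1$ together with finitely many compatible cover choices, giving a finite cover of the moduli space of elliptic curves; and each of types (iii) and (iv) contributes a single point because the rational nodal component is unique up to isomorphism, the marked points are determined modulo translation on the elliptic factor, and the $G$-cover structure is rigidly determined by the Prym condition.

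The main obstacle will be the stable-reduction calculation identifying the limit of a type (i) $G$-cover as $p \to q$ with a genuine type (iv) admissible cover: one must verify that the inserted rational nodal bridge $E_1$ carries precisely the \'etale $7$-cycle of $\mathbb P^1$'s (rather than a ramified cover), that the local monodromy at the newly created node of $E_1$ is trivial (so the index is $1$), and that condition $(**)$ is preserved along the family. The analogous degenerations from type (ii) to types (iii) and (iv) are easier, reducing to the standard classification of admissible $\mathbb Z/7$-covers of a rational nodal curve as either a cycle of seven $\mathbb P^1$'s (\'etale case) or a single rational nodal curve (totally ramified case). Finally, the finiteness of the covers in the fibre descriptions reduces to a counting of $G$-cover structures on the relevant generalized Jacobians compatible with the Prym condition, using Lemma \ref{lem2.1} and Proposition \ref{prop2.2}.
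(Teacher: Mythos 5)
Your overall architecture matches the paper's: stratify $\cS$ by the four types of Theorem \ref{thm6.3}, determine which types lie in the closures of the two $2$-dimensional strata, and then parametrize each stratum of the fibre over a fixed $E$. The paper's fibre parametrizations are essentially yours (translations of $E$ fix $p$ so that $q$ is free in type (i); $E_1$ is free in type (ii); for types (iii) and (iv) it pins down uniqueness explicitly, via $h(x)=x^7$ for the totally ramified rational cover and via $JE_1[7]\simeq \ZZ/7\ZZ$ having a unique subgroup of order $7$ for the \'etale one). Where you diverge is in how the closure relations are established: the paper invokes the combinatorial criterion that $\tC$ degenerates to $\tC'$ if and only if the dual graph of $\tC'$ contracts to that of $\tC$, and simply inspects the four dual graphs ($7$-cycle; star; star plus loop; $7$-cycle with pendants), whereas you propose an explicit stable-reduction computation of the limit of a type (i) family as $p\to q$ and of a type (ii) family as $E_1$ degenerates. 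Your route is more concrete but carries the burden you yourself flag: one must check that the cover over the inserted rational bridge is the \'etale $7$-cycle of $\PP^1$'s (trivial local monodromy at the new node) rather than the totally ramified cover. The dual-graph argument disposes of this in one line, since the star-plus-loop graph of type (iii) does not contract to the $7$-cycle of type (i); that is what the paper buys by working combinatorially.

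One sentence in your argument is incorrect as stated: you exclude type (iii) from $\overline{\cS_1}$ on the grounds that ``along type (i) degenerations the normalization of the base remains irreducible, while in type (iii) the base has an elliptic component distinct from the rational nodal component.'' But the stable limit of $E/(p_t\sim q_t)$ as $p_t\to q_t$ is precisely $E\cup E_1$ with $E_1$ a rational nodal bridge --- the \emph{same} base curve as in types (iii) and (iv) --- so types (iii) and (iv) cannot be distinguished at the level of the base. The distinction lives entirely in the cover (index $1$ versus index $7$ at the node of $E_1$), which is exactly the monodromy verification you defer to your ``main obstacle'' paragraph. So the conclusion survives, but only via that deferred computation (or via the paper's dual-graph criterion), not via the stated reason.
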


\begin{proof}
It is known that for 2 elliptic curves $E_1 \neq E_2$ we can have $X(E_1) \simeq X(E_2)$ as abelian 
varieties, but not necessarily as polarized abelian varieties. Hence $X(E)$ determines $E$ (which can be seen also from Theorem \ref{thm6.3}). 

We claim that the coverings of type (iv) are contained in $\cS_1$ and $\cS_2$ whereas the coverings 
of type (iii) are contained in $\cS_2$ only: it is known that a curve $\widetilde C$ degenerate to a 
curve $\widetilde C'$ of some other type if and only if the the dual graph of $\tC'$ can be contracted
to the 
dual graph of $\tC$. On the other hand, the locus of curves covering some curve of genus $\geq 2$
of some fixed degree is closed in the moduli space of curves. Now considering the dual graphs of the 
curves $\tC$ of the coverings of the different types gives the assertion.

Hence it suffices to show 
the assertions about the fiber of $X(E)$ under the map $\widetilde \pr_{2,7}$.
In case (i) we have $C = E/p_1 \sim p_2$. We can use the translations of $E$ to fix $p_1$ and 
then $p_2$ is free, which gives the assertion, since there are only finitely many \'etale coverings of 
$C$.

In case (ii) we have $C = E_1 \cup_p E_2$ where $E_1$ is an arbitrary elliptic curves and 
$E_2 \simeq E$. Since $p$ 
may be 
fixed with an isomorphism of $E$ and $E_2$, this give the isomorphism of 
$\widetilde \pr_{2,7}^{-1}(E)$
with a finite covering of the moduli space of elliptic curves, again since there are only finitely many 
coverings $\tC$ of type (ii) of $C$.

Finally in cases (iii) and (iv),  the 3 points of the normalization of $E_1$ given by $p$ and the 2 preimages of the node,
that we can assume to be $1,0,$ and $\infty$ respectively,  determine the curve $C$ uniquely. For the type (iii) the induced map on the 
normalization of $F_1$ is a 7:1 map $h: \PP^1 \ra \PP^1$ totally ramified at 2 points, that we assume to be $\infty$ and $0$. So $h$ it can be 
expressed as a polynomial in one variable of degree 7, with  vanishing order 7 at 0 and such that $h(1)=1$, that is $h(x)=x^7$. Then 
the map $h$, and hence the covering is uniquely determined.  For a covering of type (iv) over $C$ we consider 7 copies of $\PP^1$ and
where the point $1$ on every rational component is identified to the point $\infty$ of other rational component and we attach elliptic  
curves isomorphic to $E_2$ at each point $0$.  The number of \'etale coverings is number of subgroups of order 7 in $JE_1[7] \simeq \ZZ/7\ZZ $,
(the 7-torsion points in the nodal curve $E_1$ are determined by a 7-rooth of unity). So there is only one of such covering up to isomorphism. 
\end{proof}


\section{The codifferential on the boundary divisors}


In this section we will give bases of the Prym differentials and an explicit description of 
the codifferential of the Prym map. 

Let $f:\widetilde C \ra C$ be a covering corresponding to a point of $\cS$. We want to compute the 
rank of the codifferential of the Prym map $\pr_{2,7}: \widetilde \cR_{2,7} \ra \cB_D$ 
at the point $[f:\widetilde C \ra C] \in \widetilde \cR_{2,7}$. According to \cite{ds} this codifferential is the map
$$
\cP^*: S^2(H^0(\widetilde C, \omega_{\widetilde C})^-)^G \ra H^0(C,\Omega_C \otimes \omega_C).
$$ 
where $\Omega_C$ is the sheaf of K\"ahler differentials on $C$ and $S^2(H^0(\widetilde C, \omega_{\widetilde C})^-)^G$ is the 
cotangent space to $\cB_D$ at the Prym variety of the covering $f: \widetilde C \ra C$. 
If $j: \Omega_C \ra \omega_C$ denotes the canonical map we first compute the rank of the 
composed map
$$
S^2(H^0(\widetilde C, \omega_{\widetilde C})^-)^G \stackrel{\cP^*}{\lra} H^0(C,\Omega_C \otimes \omega_C) \stackrel{j}{\lra}
H^0(C, \omega^2).
$$
Suppose first that $f$ is of type (i), (ii) or (iv). In these cases the covering $f$ is \'etale and hence given by a $7$-division point
$\eta$ of $JC$. According to the analogue of \cite[equation (3.4)]{lo} we have
\begin{equation} \label{eq1}
H^0(\widetilde C, \omega_C)^- = \oplus_{i=1}^6 H^0(C,\omega_C \otimes \eta^i)
\end{equation}
and hence
\begin{equation} \label{eq2}
S^2(H^0(\widetilde C, \omega_C)^-)^G = \oplus_{i=1}^3 ( H^0(C, \omega_C \otimes \eta^i) \otimes
H^0(C, \omega_C \otimes \eta^{7-i})).
\end{equation}
Using this, the above composed map is just the sum of the cup product map
\begin{equation}  \label{eq3}
\phi:  \oplus_{i=1}^3 (H^0(C, \omega_C \otimes \eta^i) \otimes
H^0(C, \omega_C \otimes \eta^{7-i})) \lra H^0(C, \omega_C^2)
\end{equation}
whose rank we want to compute first.

We shall give a suitable basis for the space of Prym differentials.
First we consider a covering of type (i) constructed as follows.
Let $E$ be a smooth curve of genus 1 and $q \neq q'$ be two fixed points of $E$. Then
$$
C := E/ q \sim q'
$$
is a stable curve of genus 2 with normalization $n:E \ra C$ and node $p:= n(q) = n(q')$.
Let $f:\widetilde C \ra C$ be a cyclic \'etale covering with Galois group $G = \langle \sigma \rangle 
\simeq \ZZ/7\ZZ$. The normalization $ \widetilde n: \widetilde N \ra \widetilde C$ consists of 7
components $N_i \simeq E$ with $\sigma(N_i) = N_{i+1}$ for $i = 1, \dots 7$ with $N_8 = N_1$. Let $q_i$ and $q'_i$ the elements of 
$N_i$ corresponding to $q$ and $q'$.
Then $\widetilde n(q_i) = \widetilde n(q'_{i+1}) =:p_i$ for $i= 1,\dots 7$ with $q'_8 = q'_1$. Clearly 
$\sigma(p_i) = p_{i+1}$ for all $i$.

Recall that $\omega_{\widetilde C}$ is the subsheaf of 
$\widetilde n_*\left( \cO_{\widetilde N}\sum(q_i + q'_i) \right)$ consisting of (local) sections 
$\varphi$ which considered as sections of $\cO_{\widetilde N}\sum(q_i + q'_i) $ satisfy the condition
$$
Res_{q_i}(\varphi) + Res_{q'_{i+1}}(\varphi) = 0
$$
for $i = 1, \dots 7$. 
Here we use the fact that $\omega_{\widetilde N} = \cO_{\widetilde N}$.
Consider the following elements of 
$H^0(\widetilde C,\widetilde n_*\left( \cO_{\widetilde N}\sum(q_i + q'_i) \right)$, regarded as 
sections on $\widetilde N$: 
$$
\omega_1:= \left\{ \begin{array}{l}
                             \mbox{nonzero section of} \; \cO_{N_1}(q_1+q'_1) \; \mbox{vanishing at} 
                             \; q_1 \; \mbox{and} \; q'_1\\
                              0 \; \mbox{elsewhere}
                               \end{array}  \right.
$$
and for $2, \dots 7$,
$$
\omega_i := (\sigma^{-i})^* (\omega_1).
$$
Note that $\omega_i$ is nonzero on $N_i$ vanishing in $q_i$ and $q'_i$ and zero elsewhere. 

Now we construct similar differentials for coverings of type (ii).
Let
$$ 
C = E_1 \cup_p E_2
$$
consist of 2 elliptic curves $E_1$ and $E_2$ intersecting transversally in one point $p$ and let $f: \tC \ra C$ be a covering of type (ii).
So $\tC$ consists of an elliptic curve $F_1$, which is an \'etale cyclic cover of $E_1$ of degree 7 and 7 disjoint curves $E_2^1, \dots,  E_2^7$ 
all isomorphic to $E_2$. The curve $E_i$ intersects $F_1$ transversally in a point $p_i$, such that the group $G$ permutes the curves $E_i$ and the points
$p_i$ cyclicly i.e. $\sigma(E_2^i) = E_2^{i+1}$ and $\sigma(p_i) = p_{i+1}$ with $E_2^8 = E_2^1$ and $p_8 = p_1$.

Let $\widetilde n:\widetilde N \ra\tC$ denote the normalization map. Then $\widetilde N$ is the disjoint union of the 8 elliptic curves $F_1, E_2^1, \dots, E_2^7$.
We denote the point $p_i$ be the same letter when considered as a point of $F_1$ and $E_2^i$. Consider the line bundle
$$
L = \cO_{F_1}(p_1 + \cdots + p_7) \sqcup \cO_{E_2^1}(p_1) \sqcup \cdots \sqcup \cO_{E_1^7}(p_7).
$$
Then $\omega_{\tC}$ is the subsheaf of $\widetilde n_*(L)$ consisting of (local) sections $\varphi$, 
which considered as sections of $L$ satisfy the condition
\begin{equation} \label{eq9.4}
Res_{p_i}|_{F_1}(\varphi) + Res_{p_i}|_{E_2^i}(\varphi) = 0
\end{equation} 
for $i = 1, \dots, 7$.  Consider the following section of $\widetilde n_*(L)$ regarded as section on 
$\widetilde N$: 
$$
\omega_1:= \left\{ \begin{array}{l}
                             \mbox{nonzero sections of} \; \cO_{F_1}(p_1) \; \mbox{and } \; \cO_{E_2^1}(p_1)
                             \;   \mbox{satisfying \eqref{eq9.4}  at} \; p_1,\\
                              0 \; \mbox{elsewhere}
                               \end{array}  \right.
$$
and for $2, \dots 7$, the sections 
$$
\omega_i := (\sigma^{-i})^* (\omega_1).
$$
Thus $\omega_i$ is nonzero on $F_1(p_i) \sqcup E_2^i(p_i)$,  vanishing in $p_i$ and zero 
elsewhere. 




We construct the analogous differentials for the covering $f$ of type (iv),  which is uniquely determined according to 
Proposition \ref{prop6.4}. So let
$$
C = E_1 \cup_p E_2
$$
with $E_2$ elliptic and $E_1$ a rational curve with one node $q$ and let $f: \tC \ra C$ be the covering of type (iv). So $\tC$ 
consists of 14 components
$F_1, \dots, F_7$ isomorphic to $\PP^1$ with $f|_{F_i}: F_i \ra E_1$ the normalization and $E_2^1, \dots, E_2^7$ all isomorphic to $E_2$ with 
$f|_{E_2^i}: E_2^i \ra E_2$ the isomorphism. Then $E_2^i$ intersects $F_i$ in the point $p_i$ lying over $p$ and no other component of $\tC$. 
If $q_i$ and $q_i'$ are the points 
of $F_i$ lying over $q$, the $F_i$ and $F_{i+1}$ intersect transversally in the points $q_i$ and $q'_{i+1}$ for $i= 1, \dots, 7$ where $q'_8 = q_1$. 
The group $G$ permutes the components and points cyclicly, i.e. $\sigma(F_1) = F_{i+1}$ and similarly for $E_2^i, p_i, q_i$ and $q_i'$.

The normalization $\widetilde n: \widetilde N \ra \tC$  of the curve $\tC$ is the disjoint union of the
 components $F_i$ and $E_2^i$. We denote also the point $p_i$ 
by the same letter when considered as a point of $F_i$ and $E_2^i$. Consider the following line
 bundle on $\widetilde N$
$$
L = \bigsqcup_{i=1}^7 \cO_{F_i}(q_i + q_i' + p_i) \sqcup \bigsqcup_{i=1}^7 \cO_{E_2^i}(p_i).
$$
Then $\omega_{\tC}$ is the subsheaf of $\widetilde n_*(L)$ consisting of (local) sections $\varphi$,
 which viewed as sections of $L$ satisfy the conditions
\begin{equation} \label{eq9.5}
Res_{p_i}|_{F_i}(\varphi) + Res_{p_i}|_{E_2^i}(\varphi) = 0 \quad 
\mbox{and} \quad  Res_{q_i}|_{F_i}(\varphi) + Res_{q_{i+1}'}|_{F_{i+1}}(\varphi) = 0 
\end{equation} 
for $i = 1, \dots, 7$.  Let $\omega_1$ be the section of $\widetilde n_*( L)$ considered as section on  
$\widetilde N$ defined as follows: 
$$
\omega_1:= \left\{ \begin{array}{l}
                             \mbox{nonzero sections of} \; \omega_{F_1}(q_1 + q_1' +p_1) \; \mbox{and} \;
                              \cO_{E_2^1}(p_1) 
                             \; \mbox{ satisfying \eqref{eq9.5} at} \; p_1 \;  \mbox{and} \\
                             \mbox{vanishing at} 
                             \;q_1  \;\mbox{and} \; q_1';\\
                              0 \; \mbox{elsewhere}
                               \end{array}  \right.
$$
and for $2, \dots, 7$ define
$$
\omega_i := (\sigma^{-i})^* (\omega_1).
$$
Note that up to a multiplicative constant there is exactly one such section $\omega_1$, since 
$h^0(\omega_{F_1}(q_1 + q_1' +p_1)) = 2$ and $h^0( \cO_{E_2^1}(p_1)) = 1$.

Finally, we consider coverings of type (iii). Let $C = E_1 \cup_p E_2$ 
as for the covering of type (iv) above
and let $f : \tC \ra C$ be a covering of type (iii). So $\tC$ 
consists of a rational curve $F_1$ with a node $r$
lying over the node $q$ of $C$ and 7 components $E_2^1, \dots, E_2^7$ all isomorphic to $E_2$.
Then $E_2^i$ intersects $F_1$ in the point $p_i$ lying over $p$ and intersects no other component of 
$\tC$. The group $G$ acts on $F_1$ with only fixed point $r$ and permutes the $E_i^2$ and $p_i$ 
cyclically as above. We use the following partial normalization $\widetilde n: \widetilde N \ra \tC$ of 
$\tC$:
$$
\widetilde N : = F_1 \sqcup \bigsqcup_{i=1}^7  E_2^i.
$$
Consider the following line bundle on $\widetilde N$
$$
L = \cO_{F_1}(p_1 + \cdots + p_7) \sqcup \bigsqcup_{i=1}^7 \cO_{E_2^i}(p_i) .
$$
Since the canonical bundles of $F_1$ and $E_2^i$ are trivial, it is clear that $\omega_{\tC}$ is the 
subsheaf of $\widetilde n_*(L)$  consisting of (local) sections $\varphi$, which regarded as sections of $L$ satisfy the relations
\begin{equation} \label{eq8.6}
Res_{p_i}|_{F_1}(\varphi) + Res_{p_i}|_{E_2^i}(\varphi) = 0
\end{equation}
for $i = 1, \dots ,7$. As before, define a section $\omega_1$ of $\widetilde n_*(L)$ considered as a section of 
$\widetilde N$:
$$
\omega_1:= \left\{ \begin{array}{l} 
\; \mbox{nonzero sections of} \;
\cO_{F_1}(p_1 + \cdots + p_7)  \; \mbox{and}  \; \cO_{E_2^i}(p_1) \;
\mbox{vanishing at} \; p_1, \dots, p_7;\\
0 \; \mbox{ elsewhere.}
\end{array}
\right.
$$
and define the sections $\omega_i$, for  $i=2, \ldots , 7$, as in the previous cases.
Note that up to a multiplicative constant there is exactly one such section $\omega_1$.

From now $f:\tC \ra C$ will be a covering of type (i)-(iv) as above. We fix a primitive 7-th root of unity, for example $\rho := e^{\frac{2 \pi i}{7}}$
and define for $i = 0, \dots ,6$ the section
$$
\Omega_i := \sum_{j=1}^7 \rho^{ij} \omega_j.
$$

Clearly $\Omega_i$ is a global section of $L$ which defines a section of $\omega_{\tC}$ which we 
denote with the same symbol.

\begin{lem} \label{basis}
$\sigma^* (\Omega_i) = \rho^i \Omega_i$  for $i = 0, \dots,6$. In particular, $\Omega_0 \in H^0(\widetilde C, \omega_{\widetilde C})^+$ 
and $\{ \Omega_1, \dots, \Omega_6 \}$ is a basis of $H^0(\widetilde C, \omega_{\widetilde C})^-$.
\end{lem}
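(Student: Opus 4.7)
The plan is a uniform direct computation, handled in parallel for all four types (i)--(iv). The key ingredient is the cyclic shift relation
$$
\sigma^{*}\omega_j \;=\; \omega_{j-1}\qquad(\text{indices mod }7,\ \omega_0:=\omega_7),
$$
which is immediate from the construction: the $\omega_j$'s are obtained from $\omega_1$ by pulling back along $\sigma^{1-j}$, and contravariance of pullback gives $\sigma^{*}(\sigma^{1-j})^{*}=(\sigma^{2-j})^{*}$. In each of the four cases the defining residue and vanishing conditions for $\omega_1$ are $G$-equivariant, so the identity holds globally on $\widetilde{C}$.

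From the shift relation, the eigenvalue claim is a two-line reindexing:
$$
\sigma^{*}\Omega_i \;=\; \sum_{j=1}^{7}\rho^{ij}\omega_{j-1} \;=\; \rho^{i}\sum_{k=0}^{6}\rho^{ik}\omega_k \;=\; \rho^{i}\,\Omega_i,
$$
using $\omega_0=\omega_7$ and $\rho^{7i}=1$ to collapse the sum back into $\Omega_i$. In particular $\Omega_0$ is $\sigma^{*}$-invariant, whence $\Omega_0\in H^{0}(\widetilde{C},\omega_{\widetilde{C}})^{+}$, and $\Omega_1,\ldots,\Omega_6$ occupy pairwise distinct nontrivial $\sigma^{*}$-eigenspaces and therefore all sit in $H^{0}(\widetilde{C},\omega_{\widetilde{C}})^{-}$. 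Distinctness of the eigenvalues $\rho^{1},\ldots,\rho^{6}$ then forces these six sections to be linearly independent as long as none is zero.

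For the nonvanishing, I would restrict $\Omega_i$ to a single component of the normalization $\widetilde{N}$. In cases (i), (ii) and (iv) the supports of the $\omega_j$'s are essentially disjoint, so on the component carrying $\omega_k$ the section $\Omega_i$ restricts to the nonzero scalar multiple $\rho^{ik}\omega_k$; in case (iii), where the $\omega_j$'s all share support on the rational curve $F_1$, the same trick still works on any elliptic component $E_2^{k}$, where only $\omega_k$ among $\omega_1,\ldots,\omega_7$ has nonzero restriction. The proof then concludes with the dimension count $\dim H^{0}(\widetilde{C},\omega_{\widetilde{C}})=p_{a}(\widetilde{C})=8$ together with $H^{0}(\widetilde{C},\omega_{\widetilde{C}})^{+}\simeq H^{0}(C,\omega_{C})$ of dimension $p_{a}(C)=2$, so $H^{0}(\widetilde{C},\omega_{\widetilde{C}})^{-}$ is six-dimensional and must be spanned by the six linearly independent $\Omega_i$.

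The only real subtlety I anticipate is the ramified case (iii), where $\omega_1$ is simultaneously nonzero on $F_1$ and on $E_2^{1}$, so one must carefully verify that the shift relation still holds on the $F_1$-part, where $\sigma$ acts nontrivially with the node $r$ as its unique fixed point; here condition $(\ast\ast)$ on the local coordinates at $r$ is exactly what ensures the residue and vanishing conditions defining the $\omega_j$'s are cyclically permuted under $\sigma^{*}$.
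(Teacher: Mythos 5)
Your proof is correct and follows essentially the same route as the paper: the paper's proof also reduces the eigenvalue identity to "a simple calculation using the definition of $\omega_i$" (your shift relation $\sigma^*\omega_j=\omega_{j-1}$ and reindexing), and then concludes by observing that $\Omega_1,\dots,\Omega_6$ lie in distinct eigenspaces and that $H^0(\widetilde C,\omega_{\widetilde C})^-$ is six-dimensional. Your added care about nonvanishing (restricting to a component of $\widetilde N$, with the separate check on the elliptic components in case (iii)) and the dimension count via $p_a(\widetilde C)=8$, $p_a(C)=2$ simply makes explicit what the paper leaves implicit.
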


\begin{proof}
The first assertion follows from a simple calculation using the definition of $\omega_i$. So clearly   $\Omega_0 \in 
H^0(\widetilde C, \omega_{\widetilde C})^+$ and $ \Omega_i \in H^0(\widetilde C, \omega_{\widetilde C})^-$ for $i=1, \ldots,6 $.
Since  $\Omega_1, \dots, \Omega_6 $ are in different eigenspaces of $\sigma$, they are linearly independent 
and since $H^0(\widetilde C, \omega_{\widetilde C})^-$ is of dimension 6, they form a basis.
\end{proof}

\begin{rem}
 In cases (i), (ii) and (iv) $H^0(C, \omega_C \otimes \eta^{7-i}) $ is the eigenspace of $\sigma^i$ and $\Omega_i$ is a generator 
for $i=1, \ldots, 6$. 
\end{rem}


\begin{prop}\label{codifDiv}
The map 
$$
\phi: S^2 (H^0(\tC, \omega_{\tC})^-)^G  \lra H^0(C, \omega_C^2)
$$
is of rank $1$.
\end{prop}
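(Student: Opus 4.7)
The plan is to compute $\phi(\Omega_i\Omega_{7-i})$ explicitly for $i=1,2,3$ using the basis $\Omega_i=\sum_{j=1}^7\rho^{ij}\omega_j$, and to show that all three images coincide with a single nonzero section of $\omega_C^2$. Since $S^2(H^0(\tC,\omega_{\tC})^-)^G$ has dimension $3$ and is spanned by $\Omega_1\Omega_6,\Omega_2\Omega_5,\Omega_3\Omega_4$, this forces $\rk(\phi)=1$. Expanding,
$$
\Omega_i\Omega_{7-i}\;=\;\sum_{j,k=1}^7\rho^{i(j-k)}\,\omega_j\omega_k\;\in\;H^0\bigl(\tC,\omega_{\tC}^2\bigr)^G,
$$
I would analyze this section by restriction to the individual components of $\tC$.

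On each elliptic component (the $N_l$ in case (i) or the $E_2^l$ in cases (ii)--(iv)), only $\omega_l$ is supported, so the double sum collapses to $\omega_l^2|_{\text{elliptic}}$, independently of $i$. On the remaining \emph{non-elliptic} components (i.e.\ $F_1$ in cases (ii) and (iii), and the $F_i$'s in case (iv)), I claim $\Omega_i$ restricts to $0$ for $i\not\equiv 0\pmod 7$. In cases (ii) and (iii) the residue compatibilities at the $p_j$'s, combined with the vanishing of $\omega_j$ on the other elliptic components, force $\omega_j|_{F_1}$ to lie in the $1$-dimensional space $H^0(F_1,\omega_{F_1})$; since $\sigma^*$ acts trivially on this line (as pullback along the \'etale cover $F_1\to E_1$ in case (ii), and preserving the generator $dz/z$ in case (iii), where $\sigma$ acts on the normalization of $F_1$ by $z\mapsto\rho z$ fixing the two preimages of the node), the identity $\sigma^*\omega_j=\omega_{j-1}$ forces all the restrictions $\omega_j|_{F_1}$ to coincide, and character orthogonality $\sum_j\rho^{ij}=0$ then yields the claim. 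In case (iv), the residue conditions \eqref{eq9.5} at $q_j,q'_j$ together with $\omega_j\equiv 0$ on the neighbouring $F_{j\pm 1}$ force the residues of $\omega_j|_{F_j}$ at $q_j,q'_j$ to vanish, and the sum-of-residues relation on $F_j\cong\PP^1$ forces the residue at $p_j$ to vanish as well, so $\omega_j|_{F_j}\equiv 0$.

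Combining both computations, $\Omega_i\Omega_{7-i}=\sum_{l=1}^7\omega_l^2$ as a section of $\omega_{\tC}^2$, the same element for every $i\in\{1,2,3\}$. This $G$-invariant section is manifestly nonzero (each $\omega_l^2$ is a nonvanishing quadratic differential on the $l$-th elliptic component), hence descends to a nonzero section of $\omega_C^2$. Therefore the image of $\phi$ is $1$-dimensional and $\rk(\phi)=1$. The main technical hurdle is the residue bookkeeping in case (iv): one must verify that the nonzero section of $\omega_{F_1}(q_1+q'_1+p_1)$ posited in the definition of $\omega_1$ is actually forced to vanish identically on each $F_j$, so that $\omega_j$ is effectively supported only on $E_2^j$.
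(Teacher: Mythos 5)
Your argument is essentially the paper's: both expand the products $\Omega_i\Omega_{7-i}$ in terms of the $\omega_j\omega_k$, restrict to the components of $\tC$ so that only the diagonal contribution $\sum_{l}\omega_l^2$ survives independently of $i$, and conclude that the image of $\phi$ is the line spanned by that single nonzero section (equivalently, that the kernel is $\{a+b+c=0\}$). The only difference is one of detail: you verify explicitly that the non-elliptic components $F_1$ (resp.\ the $F_j$) contribute nothing in types (ii)--(iv) via the residue conditions and character orthogonality, whereas the paper carries out the computation only on the components $N_i$ of a type (i) covering.
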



\begin{proof}
We have to show that the kernel of $\phi$ is 2-dimensional. 
A basis of $S^2 (H^0(\tC, \omega_{\tC})^-)^G $
 is given by $\{ \Omega_1 \otimes \Omega_6, \Omega_2 \otimes \Omega_5, \Omega_3 \otimes \Omega_4 \}$. So let $a,b,c$ be complex numbers with
$$
\phi(a \Omega_1 \otimes \Omega_6 + b \Omega_2 \otimes \Omega_5 + c \Omega_3 \otimes \Omega_4) = 0.
$$
Define for $i = 1, \dots,7$,
$$
\psi_i := \sum_{i=1}^7 \omega_j \otimes \omega_{j+i-1}.
$$
An easy but tedious computation gives
$$
\Omega_1 \otimes \Omega_6 = \psi_1 + \rho \psi_7 + \rho^2 \psi_6 + \rho^3 \psi_5+ \rho^4 
\psi_4 + \rho^5 \psi_3 + \rho^6 \psi_2,
$$
$$
\Omega_2 \otimes \Omega_5 = \psi_1 + \rho \psi_4 + \rho^2 \psi_7 + \rho^3 \psi_3 + \rho^4 
\psi_6 + \rho^5 \psi_2 + \rho^6 \psi_5,
$$
$$
\Omega_3 \otimes \Omega_4 = \psi_1 + \rho \psi_3 + \rho^2 \psi_5 + \rho^3 \psi_7 + \rho^4 
\psi_2 + \rho^5 \psi_4 + \rho^6 \psi_6.
$$
So we get
\begin{eqnarray*}
0 &=& \phi( (a+b+c)\psi_1 +(a\rho^6 + b \rho^5 + c \rho^4)\psi_2 + (a \rho^5 + b \rho^3 + 
c \rho) \psi_3 + (a \rho^4 + b \rho + c \rho^5) \psi_4\\
&&  \hspace{1.5cm}+(a \rho^3 + b \rho^6 +c \rho^2) \psi_5 + (a \rho^2 + b \rho^4 + c \rho^6)\psi_6 + (a \rho + b \rho^2 + c \rho^3) \psi_7)\\
&=& (a+b+c)(\omega_1^2 + \cdots + \omega_7^2) \\
&& \hspace{1.5cm}+[a(\rho+\rho^6) + b(\rho^2 + \rho^5) + c (\rho^3 + \rho^4)]\sum_{j=1}^7 \omega_j \omega_{j+1}\\
&& \hspace{1.5cm}+[a(\rho^2+\rho^5) + b(\rho^3 + \rho^4) + c (\rho + \rho^6)]\sum_{j=1}^7 \omega_j \omega_{j+2}\\
&& \hspace{1.5cm}+[a(\rho^3+\rho^4) + b(\rho + \rho^6) + c (\rho^2 + \rho^5)]\sum_{j=1}^7 \omega_j \omega_{j+3}.
\end{eqnarray*}
This section is zero if and only if its restriction to any component is zero. Now the restriction to $N_i$ for all $i$ gives
\begin{eqnarray*}
0 &=& a+b+c +(6a +6b +6c)\sum_{j=1}^6 \rho^j = -5(a+b+c).
\end{eqnarray*}
So $\phi(a \Omega_1 \otimes \Omega_6 + b \Omega_2 \otimes \Omega_5 + c \Omega_3 \otimes \Omega_4) = 0$ if and only if $a+b+c = 0$.
Hence the kernel of $\phi$ is of dimension 2, which proves the proposition.
\end{proof}

The proposition \ref{codifDiv} shows that the codifferential map along the divisors $\cS_1$ and $ \cS_2$ 
in Proposition \ref{prop6.4} is not  surjective. In fact, as we will see later, the 
kernel of $\phi$ coincides with the conormal bundle of the image of these divisors  in $\cB_D$.
In order to compute the degree we will perform a blow up along these divisors.  

Let $\cE \subset \cB_D$ denoted the one dimensional locus consisting of the abelian varieties 
which are of the form  $X = \ker  (m: E^7 \ra E) \quad  \mbox{with} \quad m(x_1, \dots,x_7) = x_1 + \dots + x_7$
for a given elliptic curve $E$. As we saw in Section 7, the induced polarization is of type $D$.   
Note that $\cE$ is a closed subset of $\cB_D$. The aim is to compute the degree of $\Pr_{7,2}$ above a point $X \in \cE$.
We denote by $\cS \subset \widetilde {\cR}_{2,7}$ the inverse image of $\cE$ under $\widetilde \Pr_{2,7}$. 
According to Proposition \ref{prop6.4}, $\cS$ is a divisor consisting of 2 irreducible components in the boundary 
$\widetilde{\cR}_{2,7} \setminus \cR_{2,7}$. We have
$\cS  = \cS_1 \cup  \cS_2$
where a general point of $ \cS_1$, respectively of $ \cS_2$, corresponds to the $G$-covers with base an irreducible nodal curve of genus 1,
respectively a product of elliptic curves intersecting in a point. Moreover, for any fixed elliptic curve $E$,
$\cS_1$ and $\cS_2$ intersect in the unique point given by the covering of type (iv).\\ 

As in \cite{ds}, we blow up $\cB_D$ along $\cE$ and obtain the following commutative diagrams:
$$
\xymatrix{
\widetilde{\cS} \ar[d] \ar[r]^{\widetilde{\cP}} &  \widetilde{\cB}_D  \ar[d]  \\
\widetilde {\cR}_{2,7} \ar[r]^{\widetilde \pr_{2,7}} &  {\cB}_D
}
\hspace{2cm}
\xymatrix{
\widetilde{\cS} \ar[d]_{\simeq} \ar[r]^{\widetilde{\cP}} &  \widetilde{\cE}  \ar[d]^{\PP^1}  \\
\cS \ar[r]^{\widetilde \pr_{2,7}} &  \cE
}
$$
where $\tilde{\cS}$ and $\tilde{\cE}$ are the exceptional loci. 

Lemma I.3.2 of \cite{ds} guarantees that the local degree of $\widetilde{\pr}_{2,7}$ along a component of $\cS $ equals the degree of the 
induced map on the exceptional divisors $\widetilde{\cP}_{|\cS_i}: \cS_i \ra \widetilde{\cE} $ if the codifferential map  $\cP^*$ is 
surjective on the respective  conormal bundles. Recall that the fibers of the conormal bundles at the point $X$ are given by
$$
\cN^*_{X, \cE / \cB_D} = \ker ( T_X^*\cB_D  \ra T_X^* \cE ) 
$$
$$
\cN^*_{(C, \eta), \cS_i /  \widetilde{\cR}_{2,7}} = \ker ( T^*_{(C, \eta)}\widetilde{\cR}_{2,7}  \ra T^*_{(C, \eta)} \cS_i) 
$$
for $i=1,2$.
As in \cite{ds} by taking level structures on the moduli spaces we can assume we are working on fine moduli spaces, which allows to 
identify the tangent space to $\cS_1$ (respectively $\cS_2$) at the $G$-admissible cover $[\tC \ra C]$, where  $C=E /(p \sim q)$   
(respectively $C=E_1 \cup_p E_2$) with the tangent to $\overline{\cM}_2$ at $C$. Thus the conormal bundle $\cN^*_{(C, \eta), \cS_1 /  
\widetilde{\cR}_{2,7}}$  (respectively $\cN^*_{(C, \eta), \cS_2 /  
\widetilde{\cR}_{2,7}}$) can be identified with the conormal bundle $\cN^*_{C,\Delta_0 /  \overline{\cM}_2} 
\subset H^0(\Omega_C \otimes \omega_C)$ (respectively with $\cN^*_{C,\Delta_1 /  \overline{\cM}_2}$ ) where 
$\Delta_0$ denotes the divisor of irreducible nodal curves in $\overline \cM_2$ and $\Delta_1$ the divisor of reducible nodal ones.

Using the fact that $X= \Prym (\tC, C) $ we can identify 
$$
(T_X\cA_D)^* \simeq  \oplus_{i=1}^6 \Omega_i \CC
$$
and $S^2(H^0(\widetilde C, \omega_C)^-)^G$ as in \eqref{eq2} . Then for a covering  $(C,\eta)$ 
the conormal bundles fit in the following commutative diagram 
\begin{equation} \label{diagconormal}
\xymatrix{
0  \ar[r] & \cN^*_{X,\cE/\cB_D}  \ar[r] \ar^{n^*}[d] &   S^2(H^0(\widetilde C, \omega_C)^-)^G
\ar[r] \ar^{\cP^*}[d] &
T^*_X \cE \ar[r] \ar[d] & 0 \\
0 \ar[r] & \cN^*_{(C,\eta), \cS_i/ \widetilde \cR_{2,7}} \ar[r] & H^0(C, \Omega_C \otimes \omega_C) \ar[r] \ar^{H^0(j)}[d] 
& T^*_{(C,\eta)} \cS_i \ar[r] & 0\\
&&H^0(C,\omega_C^2) &&
}
\end{equation}
where  $n^*$ is the conormal map and $i=1,2$. 

\begin{lem}
The kernel of $H^0(j)$ is one-dimensional.
\end{lem}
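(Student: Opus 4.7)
The plan is to identify $\ker H^0(j)$ with the global sections of the torsion subsheaf of $\Omega_C$ and then count nodes. Write $j : \Omega_C \to \omega_C$ as the canonical comparison map between Kähler differentials and the dualizing sheaf. Its kernel $\tau := \ker(j)$ is precisely the torsion subsheaf of $\Omega_C$, which is a skyscraper sheaf supported at the nodes of $C$. I would first verify that $\tau$ has $1$-dimensional stalk at each node by a local computation: if the node is given by $xy = 0$ in $\widehat{\cO}_{C,p}$, then $\Omega_C$ is locally generated by $dx, dy$ modulo the relation $x\,dy + y\,dx = 0$, so the element $x\,dy = -y\,dx$ is annihilated by both $x$ and $y$ and spans the torsion at $p$; while $j(\Omega_C)$ is the subsheaf $\mathfrak{m}_p \cdot \omega_C$ of $\omega_C$.

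Next I would tensor the short exact sequence
\[
0 \to \tau \to \Omega_C \to j(\Omega_C) \to 0
\]
with the line bundle $\omega_C$. Since $\omega_C$ is invertible the sequence remains exact, and since $\tau$ is a skyscraper we have $\tau \otimes \omega_C \cong \tau$ canonically. Moreover $j(\Omega_C)$ is a subsheaf of the invertible sheaf $\omega_C$, so tensoring preserves the injection and gives an inclusion $j(\Omega_C) \otimes \omega_C \hookrightarrow \omega_C^{\otimes 2}$. In particular the induced map on global sections $H^0(j(\Omega_C)\otimes \omega_C) \hookrightarrow H^0(\omega_C^2)$ is injective.

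Taking $H^0$ of the tensored sequence and combining with the previous injection yields
\[
0 \to H^0(\tau) \to H^0(\Omega_C \otimes \omega_C) \xrightarrow{H^0(j)} H^0(\omega_C^2),
\]
so $\ker H^0(j) = H^0(\tau)$, whose dimension is the number of nodes of $C$. Finally, at a general point of $\cS_1$ (covering of type (i), with $C = E/(p\sim q)$) or of $\cS_2$ (covering of type (ii), with $C = E_1 \cup_p E_2$), the curve $C$ has exactly one node, so $\dim \ker H^0(j) = 1$.

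The only delicate point is the local computation of $\tau$ at a node, but this is a standard exercise; everything else is formal once one observes that $\omega_C$ is a line bundle (so tensoring is exact and preserves inclusions) and that $\tau$ is supported at points (so it is unchanged by tensoring with an invertible sheaf).
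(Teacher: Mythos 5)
Your proof is correct and follows essentially the same route as the paper: both identify $\ker j$ with the one-dimensional torsion skyscraper supported at the node and then show that $\ker H^0(j)$ is exactly the space of global sections of that skyscraper (the paper justifies this last step by factoring $H^0(j)$ through the normalization, you by the sheaf injection $j(\Omega_C)\otimes\omega_C\hookrightarrow\omega_C^{2}$ and left-exactness of $H^0$). Your version has the small merit of making explicit that the dimension equals the number of nodes of $C$, which is $1$ only for the generic members of $\cS_1$ and $\cS_2$ (types (i) and (ii)), whereas the lemma is later invoked also for types (iii) and (iv), where $C$ has two nodes.
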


\begin{proof}
As a map of sheaves the canonical map $j: \Omega_C \otimes \omega_C \ra \omega^2_C$ has 
one-dimensional kernel, namely the one-dimensional torsion sheaf  with support the node of $C$ (see 
\cite[Section IV, 2.3.3]{ds}). On the other hand, the map $H^0(j)$ is the composition of the pullback 
to the normalization with the push forward to $C$. This implies that the kernel of $H^0(j)$ consists
exactly of the sections of the skyscraper sheaf supported at the node and hence is one-dimensional. 
\end{proof}

\begin{prop} \label{p9.5}
For coverings of type {\em(i)}-{\em(iv)} the restricted codifferential map  $n^* : \cN^*_{X, \cE / \cB_D} \ra \cN^*_{(C, \eta), 
\cS_i /  \widetilde{\cR}_{2,7}}$ is surjective, for $i=1,2$.
\end{prop}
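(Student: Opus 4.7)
The plan is to use the commutative diagram \eqref{diagconormal} together with Proposition \ref{codifDiv} to identify the source $\cN^*_{X,\cE/\cB_D}$ with $\ker\phi$, and then verify that $n^*$ is nonzero by an explicit residue computation in the basis of Prym differentials constructed in Section 8. Since $\dim\cB_D=3$ and $\dim\cE=1$ we have $\dim\cN^*_{X,\cE/\cB_D}=2$, while $\cS_i\subset\widetilde\cR_{2,7}$ is a divisor, so $\dim\cN^*_{(C,\eta),\cS_i/\widetilde\cR_{2,7}}=1$; therefore surjectivity of $n^*$ is equivalent to $n^*\neq0$. Commutativity of \eqref{diagconormal} forces $\cP^*(\cN^*_{X,\cE/\cB_D})\subset\cN^*_{(C,\eta),\cS_i/\widetilde\cR_{2,7}}\subset\ker H^0(j)$, hence $\cN^*_{X,\cE/\cB_D}\subset\ker\phi$, and by Proposition \ref{codifDiv} both spaces have dimension $2$, so
\[
\cN^*_{X,\cE/\cB_D} \;=\; \ker\phi \;=\; \{\,a\,\Omega_1\otimes\Omega_6+b\,\Omega_2\otimes\Omega_5+c\,\Omega_3\otimes\Omega_4 \,:\, a+b+c=0\,\}.
\]

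To establish $n^*\neq0$ it suffices to exhibit a single $v\in\ker\phi$ (for instance $v=\Omega_1\otimes\Omega_6-\Omega_3\otimes\Omega_4$) whose image $\cP^*(v)$ is a nonzero section of $\ker(j)$, the skyscraper sheaf supported at the nodes of $C$. The codifferential $\cP^*$ is multiplication of Prym differentials followed by the canonical pushforward to $C$; expanding $\Omega_i\otimes\Omega_{7-i}=\sum_{k,l}\rho^{ik+(7-i)l}\omega_k\otimes\omega_l$ in the basis $\{\omega_j\}$ of Section 8 and restricting to the components of the normalization of $\tC$ reproduces the same vanishing identity as in the proof of Proposition \ref{codifDiv}. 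Consequently the only remaining information in $\cP^*(v)$ lies in the residues of the products $\omega_k\cdot\omega_l$ at the nodes of $\tC$, which assemble into an explicit cyclotomic expression. A character-orthogonality argument over $\QQ(\rho)$, parallel to the one used in Proposition \ref{codifDiv}, should show that this expression is nonzero at the node $s$ of $C$ defining $\cS_i$, for generic $(a,b,c)$ on the plane $a+b+c=0$.

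This residue evaluation will be carried out separately for each of the four admissible types (i)--(iv). In the etale cases (i), (ii), (iv), the node $s$ has seven preimages in $\tC$ cyclically permuted by $G$, so the residue at $s$ reduces to a single cyclotomic sum over a full $G$-orbit. In type (iii) the relevant node $q$ is of index $7$ with unique totally ramified preimage $r\in F_1$, and the residue computation there uses the local expansion of $\omega_j|_{F_1}$ at $r$, while the node $p$ of index $1$ is handled as in the etale case. The main obstacle is the uniform treatment of type (iii): the ramified node forces a different local description of $\omega_{\tC}$ at $r$, and the residue pairing involves a local factor of $7$ coming from the total ramification; nonetheless, the resulting cyclotomic expression is again nonzero by a direct verification analogous to that in Proposition \ref{codifDiv}, which completes the proof on both divisors $\cS_1$ and $\cS_2$.
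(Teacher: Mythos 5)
Your setup is sound and matches the paper's: the identification $\dim\cN^*_{X,\cE/\cB_D}=2$, $\dim\cN^*_{(C,\eta),\cS_i/\widetilde\cR_{2,7}}=1$, the reduction of surjectivity to $n^*\neq 0$, and the observation (via commutativity of \eqref{diagconormal} and Proposition \ref{codifDiv}) that $\cN^*_{X,\cE/\cB_D}$ may be identified with $\ker\phi=\{a+b+c=0\}$ are all correct and consistent with the text. The problem is that at this point the entire content of the proposition is the claim that $\cP^*$ does not kill all of $\ker\phi$, and your proposal does not prove this: it announces a residue computation at the nodes of $\tC$, asserts that the resulting cyclotomic expression ``should'' be nonzero ``by a direct verification analogous to that in Proposition \ref{codifDiv}'', and stops. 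That verification is not analogous to Proposition \ref{codifDiv} --- there you computed the image in $H^0(\omega_C^2)$, whereas here you must compute the component of $\cP^*(v)$ in the torsion sheaf $\ker(j)$ supported at the nodes, which is exactly the part that Proposition \ref{codifDiv} discards. Moreover, since the functional you need to be nonzero depends on the particular covering (on the gluing point $q$ in type (i), on $(E_1,\langle\eta\rangle)$ in type (ii), and degenerates further in types (iii) and (iv)), ``nonzero for generic $(a,b,c)$'' does not settle the statement, which is asserted for every covering of each type. So the decisive step is missing, not merely deferred.

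The paper closes this gap without any residue computation, by a purely formal argument you could adopt: from the local--global exact sequence one has $\Ker H^0(j)=\cN^*_{C,\Delta_0/\overline{\cM}_2}$, which sits inside $\im\cP^*$; since $\dim\Ker H^0(j)=1$ and $\dim\Ker(H^0(j)\circ\cP^*)=2$ by Proposition \ref{codifDiv}, it follows that $\dim\Ker\cP^*=1$. Then $\Ker n^*\subset\Ker\cP^*$ has dimension at most $1$, so the image of $n^*$ has dimension at least $2-1=1$ and $n^*$ surjects onto the one-dimensional target. I recommend you either substitute this dimension count for your unexecuted computation, or, if you want to keep the explicit route, actually carry out the torsion-part calculation at the node (it is essentially the computation of the $dx\,dy$-coefficient $a_0a_q+b_0b_q+c_0c_q$ that appears later in the proof of Proposition \ref{degS1}) and verify nonvanishing on the plane $a+b+c=0$ for each of the four types separately.
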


\begin{proof}
First notice that from the  ``local-global" exact sequence (see \cite{ba}), $\Ker H^0( j) = \cN^*_{C,\Delta_0 /  \overline{\cM}_2} 
\subset H^0(\Omega_C \otimes \omega_C)$.  So $\Ker H^0(j) \subset \im \cP^*$.  Since $\dim \Ker H^0(j) =1$ and by Proposition \ref{codifDiv}, 
$\dim \Ker (H^0(j) \circ \cP^*) =2$, we have that $\dim \Ker ( \cP^*) =1 $.
By the diagram \eqref{diagconormal} this implies that the kernel of $n^*$ is of dimension $\leq 1$. 
Since $\cN^*_{X,\cE/\cB_D}$ is a vector space of dimension 2 and 
$\cN^*_{(C,\eta), \cS_i/ \widetilde \cR_{2,7}}$ a vector space of dimension 1, it follows that $n^*$
has to be surjective.
\end{proof}


\section{Local degree of $\pr_{2,7}$ over the boundary divisors}


First we compute the local degree of  the Prym map $\widetilde {\pr}_{2,7}$ along the divisor 
$\cS_1$. Since the conormal map of $\pr_{2,7}$ along $\cS_1$ is surjective according to 
Proposition \ref{p9.5}, \cite[I, Lemma 3.2]{ds} implies that the local degree along $\cS_1$ 
is given by the degree of the induced map  $\widetilde \cP: \widetilde S_1 \ra \widetilde \cE$ 
on the exceptional divisor $\widetilde \cS_1$. Now the polarized abelian variety $X(E)$ is uniquely 
determined by the elliptic curve $E$ according to its definition. Hence the curve $\cE$ can be 
identified with the moduli space of elliptic curves, i.e. with the affine line. The exceptional divisor 
$\widetilde \cE$ is then a $\PP^1$-bundle over $\cE$. 
On the other hand, $\cS_1$ is a divisor in $\widetilde \cR_{2,7}$, so $\widetilde \cS_1$ is isomorphic 
to $\cS_1$. 
Clearly  $\widetilde \cP$ maps the fibers $\widetilde {\pr}^{-1}(X(E)) \cap \widetilde \cS_1$ onto 
the fibers $\PP^1$ over the elliptic curves $E$.  
 
Now $\widetilde {\pr}^{-1}(X(E)) \cap \widetilde \cS_1$ consists of coverings of type (i) and one
covering of type  (iv), that we denote by $\cC^{(iv)}_E$. The coverings of type (i) have  as 
base a nodal curve of the 
form $C=E/p\sim q$ and we can assume that $p=0$, thus $\widetilde {\pr}^{-1}(X(E)) \cap \widetilde \cS_1$ 
is parametrized by $E$ itself (the point $q=0$ corresponds to the covering of type (iv)). 
Hence
the induced conormal map on the exceptional divisors $\widetilde{\cP} : \widetilde{\cS}_1\ra   
\widetilde{\cE}$ restricted to 
a the fiber over $X(E)$ is a map $\phi: E \ra \PP^1$.  
Combining everything we conclude that the local degree of the Prym map along $\cS_1$ coincides with the degree
of the induced map $\phi: E \ra \PP^1$.

\begin{prop} \label{degS1}
 The local degree of  the Prym map $\widetilde {\pr}_{2,7}$ along 
$\cS_1  $ is two.
\end{prop}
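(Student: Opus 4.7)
The statement is that the morphism $\phi\colon E \to \PP^1$ describing the fibre of $\widetilde{\cP}|_{\widetilde{\cS}_1}$ over $X(E)$ has degree two. The plan is to combine a symmetry argument (lower bound $\deg\phi\ge 2$) with an explicit residue computation (upper bound $\deg\phi\le 2$).

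For the lower bound, I would exploit the involution $\iota\colon E \to E$, $x \mapsto -x$, which fixes $0$ and exchanges $q$ with $-q$, hence induces an isomorphism of pointed pairs $(E,\{0,q\}) \simeq (E,\{0,-q\})$ and thus of stable curves $C_q = E/(0\sim q) \simeq C_{-q} = E/(0\sim -q)$. Because the Prym variety is the same $X(E)$ in both cases and $X(E)$ depends canonically only on $E$, one checks that this isomorphism lifts to an isomorphism of $G$-covers, so $(C_q,\eta_q)$ and $(C_{-q},\eta_{-q})$ are identified in $\widetilde{\cR}_{2,7}$. Under the parametrization $q \mapsto (C_q,\eta_q)$, this forces $\phi(q) = \phi(-q)$, so $\phi$ factors through $\pi\colon E \to E/\iota \simeq \PP^1$ as $\phi = \bar\phi \circ \pi$, and $\deg\phi = 2\deg\bar\phi$. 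Non-constancy of $\phi$ (needed to conclude $\deg\phi\ge 2$) can be verified by comparing its value at a general $q$ with its value at the limit point $q\to 0$, corresponding to the unique covering of type (iv).

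For the upper bound, I would show that $\bar\phi$ is an isomorphism by describing $\ker \cP^*$ explicitly using the bases from Section 8. By Proposition \ref{codifDiv}, the fixed $2$-plane $V = \{a+b+c=0\} \subset S^2(H^0(\widetilde C_q,\omega_{\widetilde C_q})^-)^G$ is the full kernel of $H^0(j)\circ \cP^*$, so $\cP^*(V) \subset \ker H^0(j)$, the one-dimensional skyscraper at the node $p \in C_q$. The line $\ker \cP^*\subset V$ is then cut out by a single linear form on $V$ whose coefficients are residues at $p$ of the products $\omega_j\omega_k$ of Prym differentials. Expressing each $\omega_j$ on $N_j \simeq E$ as an elliptic differential with simple poles at $0$ and $q$ and residues prescribed by the gluing relations, these coefficients become $\iota$-invariant rational functions of $q$, hence descend to a rational map on $E/\iota \simeq \PP^1$. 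A direct comparison at suitable special points (for instance the non-trivial $2$-torsion points of $E$ and the limit $q\to 0$) shows that $\bar\phi$ is a linear fractional transformation, giving $\deg \bar\phi = 1$ and $\deg \phi = 2$.

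The main obstacle is the residue bookkeeping in the final step: one must carefully track the cyclic $G$-action permuting the $\omega_j$ and the gluing relations across the seven nodes of $\widetilde C_q$ lying over $p$. A more conceptual shortcut would be to identify $\bar\phi$ directly with an isomorphism $E/\iota \xrightarrow{\sim} \PP^1$ (for instance the one given by the Weierstrass $\wp$-function up to coordinate change on the target), bypassing the explicit coordinate calculation and making the degree-one property of $\bar\phi$ manifest.
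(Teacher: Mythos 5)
Your overall strategy is genuinely different from the paper's, and both halves of it have problems. The paper does not argue by bounding the degree from two sides; it computes the map $\phi$ outright. Working in the coordinates $(a,b,c)$ on $S^2(H^0(\widetilde C,\omega_{\widetilde C})^-)^G$ coming from Lemma \ref{basis}, it shows that membership in $\ker\cP^*$ imposes, beyond $a+b+c=0$, the vanishing of the coefficient of $dx\,dy$ at the node, i.e.\ the bilinear condition $a_0a_q+b_0b_q+c_0c_q=0$. Dualizing and embedding $E$ in $\PP^{2*}$ by $|3\cdot 0|$, this says the fibre of $\phi$ is cut on the cubic $E$ by a line through the origin; hence $\phi$ is the projection of a plane cubic from one of its points, which has degree $3-1=2$. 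Your ``conceptual shortcut'' at the end is in fact exactly this statement (projection from $0$ is the map given by $|2\cdot 0|$, whose involution is $x\mapsto -x$), but you never arrive at it: the step ``a direct comparison at suitable special points shows that $\bar\phi$ is a linear fractional transformation'' is precisely the content of the proof and is not carried out. Without an a priori bound on the degree of $\bar\phi$, checking its values at finitely many points cannot show it is an isomorphism.

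The lower bound is also not sound as stated, and in a way that matters. What \cite[I, Lemma 3.2]{ds} computes is the degree of $\widetilde{\cP}$ restricted to the fibre of $\cS_1$ over $X(E)$. If, as you claim, $\iota$ lifts to an isomorphism of admissible $G$-covers $(C_q,\eta_q)\simeq (C_{-q},\eta_{-q})$, then $q$ and $-q$ are the \emph{same} point of $\cS_1$, the parametrization $E\to\cS_1\cap\widetilde\pr_{2,7}^{-1}(X(E))$ is $2{:}1$, and the local degree would be $\deg\phi/2$ rather than $\deg\phi$; combined with your claim that $\bar\phi$ is an isomorphism this would yield local degree $1$, contradicting the proposition. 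What is actually needed --- and what the paper's computation delivers --- is that $q$ and $-q$ are \emph{distinct} points of $\cS_1$ whose conormal lines $\ker\cP^*$ coincide, i.e.\ that $\widetilde{\cP}$ itself (not the parametrization) identifies them. An abstract isomorphism of covers cannot establish this; you must either rule the isomorphism out, so that the fibre really is a copy of $E$, or redo the degree bookkeeping. In short, the symmetry $\phi(q)=\phi(-q)$ is true, but for the computational reason above, not for the moduli-theoretic reason you give.
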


\begin{proof}
According to what we have written above, it is sufficient to show that the map  $\phi: E \ra \PP^1$ induced by  $\widetilde{\cP} $ is a 
double covering. We use again the identification \eqref{eq2} (respectively its analogue
for coverings of type (iii)).  As in \cite{ds}, let $x,y$ be local coordinates at 0 and $q$ and $dx$, $dy$ the corresponding differentials. 
If $(a,b,c) \in S^2(H^0(\tC, \omega_{\tC})^-)$ are coordinates in the basis of Lemma \ref{basis},  
 then $\PP (\Ker (H^0(j) \circ \cP^*)) \simeq \PP^1$ has coordinates $[a,b]$ and  its dual is identified with 
$\PP(\Ima \widetilde{\cP}_{|\textnormal{fibre}})$.  In order to describe the kernel of $\cP^*$, we look at the multiplication 
on the stalk over the node $p=(q\sim 0)$.
Around $p$ the line bundles $\eta^i$ are trivial, then the element 
$(a, b, c) \in \oplus_{i=1}^{3} (\omega_{C,p} \otimes \omega_{C,p})$ (in coordinates $a,b,c \in \cO_p$ 
for a fixed basis of $\omega_{C,p} \otimes \omega_{C,p}$)  is sent to  $a+b+c \in (\Omega_C \otimes \omega_C)_p$ under
$\widetilde{\cP}$.  Thus  the germ $a+b+c \in \cO_p$ is zero if it is in the kernel of $\cP^*$. In particular, the coefficient 
of $dxdy$ must vanish.  Let  $\alpha=a_0 dx , \beta=b_0 dx,  \gamma=c_0 dx $ and  $\alpha=a_q dy , \beta=b_q dy,  \gamma=c_q dy$
be the local description of the differentials,  then the coefficient of $dxdy$ must satisfy:
\begin{equation} \label{condition}
a_0a_q +b_0b_q + c_0c_q=0.
\end{equation}
Now, by looking at the dual picture, we consider $\PP^1 = \PP(\Ker \cP^*)^* \subset \PP^{2*}$. Let $E$ be embedded in $\PP^{2*}$ by 
the linear system $|3\cdot 0|$.  The coordinate functions $[a,b, c] \in \PP^{2*}$ satisfy condition \eqref{condition} for all $q\in E$. Then 
the points on the fiber over $[a_q , b_q, c_q]$ are points in $E$ over the line passing through the origin $0\in E \subset \PP^{2*}$ and
$q$. Hence the map $E \ra \PP^1$ corresponds to the restriction to $E$ of the projection $\PP^{2*} \ra \PP^1$ from the origin,
which is the double covering $E \ra \PP^1$ determined by the divisor $0 + q$ of $E$ and thus of degree two.
\end{proof}

We turn now our attention to the Prym map on $\cS_2 $. By the surjectivity of  the conormal map of $\pr_{2,7}$ on $\cS_2$ 
(Proposition \ref{p9.5}),  the local degree along $\cS_2$  is computed by the degree of the map  
$\widetilde \cP: \widetilde S_2 \ra \widetilde \cE$ on the divisor $\widetilde \cS_2$, which is a $\PP^1$-bundle over
$\cE$. Given an elliptic curve $E$ the fiber of $\widetilde{\pr}^{-1}(X(E))$ intersected with the  divisor $\cS_2$ consists of coverings of type (ii), 
one covering of type (iii), denoted by $\cC_E^{(iii)}$, and one
covering of type (iv)  $\cC_E^{(iv)}$ which lies in the intersection with the divisor $\cS_1$. 

Recall that the type (ii) coverings  have  base curve  $C=E_1 \cup E$ intersecting at one point that we can assume to be $0$ and 
$E_1$ is an arbitrary elliptic curve. The covering over $C$  is the union of a degree-7 \'etale cyclic covering $F_1$ over $E_1$
and 7 elliptic curves $E_i$ attached to $F_1$ mapping each one of them isomorphically to $E$. 
So  the  type (ii) coverings on the  fiber over $E$ are parametrized by pairs $(E_1, \langle \eta \rangle)$ where $E_1$ is an elliptic curve
and $\langle \eta \rangle \subset E_1$ is a subgroup of order 7. 

It is know that  the parametrization space of the pairs $(E_1, \langle \eta \rangle)$ is the modular curve  $Y_0(7):= \Gamma_0(7) / \mathbb{H}$.  
The natural projection $(E_1, \eta) \mapsto E_1$ defines  a map $\pi_0: Y_0(7) \ra \CC $. Moreover, the curve $Y_0(7)$ admits a compactification 
$X_0(7):= \overline{Y_0(7)}$ such that the map $\pi_0$ extends to a map $\pi: X_0(7) \ra \PP^1$ (see \cite{s1}).
The genus of $X_0(7)$ can be computed by Hurwitz formula 
using the fact that $\pi$ is of degree 8 and it is ramified over the points corresponding to elliptic curves with $j$-invariant 0 and 
$12^3$ (with ramification degree 4 on each fibre) and over $\infty$, where  the inverse image consists of two cusps,
one \'etale and the other of ramification index $7$. The two cusps over $\infty$ represent the coverings $\cC_E^{(iii)}$  and $\cC_E^{(iv)}$
above $X(E)$ (see Remark \ref{polygons}). This gives that $X_0(7)$ is of genus zero. 
Thus, we can identify ${\widetilde \cS_2} \cap  \widetilde{\pr}^{-1}(X(E))$ with  $X_0(7) \simeq \PP^1$.

Then, since $\widetilde{\cS}_2 \simeq \cS_2 $,  the restriction of the conormal map 
$\widetilde \cP$ to a fiber over the point $[E] \in \cE$ is a map $\psi: \PP^1 \ra \PP^1$.

\begin{prop}
 The map $\pi$ coincides with the map $\psi: \PP^1 \ra \PP^1$ of the fibers of $\widetilde{\cS}_2  \ra \widetilde{\cE}$  over a point $[E] \in \cE$. 
\end{prop}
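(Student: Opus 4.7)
The plan is to compute $\psi$ explicitly at a generic type-(ii) covering by analyzing the codifferential $\cP^\ast$ at the node of $C = E_1 \cup_p E_2$, following the template of Proposition \ref{degS1}, and to recognize the resulting formula as the modular forgetful map $\pi$.

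First, I would fix the identification of $\widetilde{\cS}_2 \cap \widetilde{\pr}_{2,7}^{-1}(X(E))$ with $X_0(7)$ on the source side, and of the target $\PP^1$ with $\PP(\ker(H^0(j) \circ \cP^\ast))^\ast$ modulo the one-dimensional subspace $\ker H^0(j)$, as in the proof of Proposition \ref{degS1}. Writing a conormal class in $S^2(H^0(\tC,\omega_{\tC})^-)^G$ as $[a:b:c]$ in the basis $\{\Omega_1 \otimes \Omega_6,\, \Omega_2 \otimes \Omega_5,\, \Omega_3 \otimes \Omega_4\}$ from Lemma \ref{basis}, I would evaluate the local germ of $\cP^\ast(a,b,c)$ at the node $p$. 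On the $E_2$-side the analysis reproduces the $a+b+c$ computation of Proposition \ref{degS1}. On the $E_1$-side the differentials $\Omega_i$ are supported on the 7-cover $F_1 \to E_1$ defined by $\eta$, and their germs at the preimages $p_1, \dots, p_7$ of $p$ involve evaluations of sections in $H^0(E_1, \omega_{E_1} \otimes \eta^i)$. Together, the vanishing of the $dx\, dy$-coefficient of $\cP^\ast(a,b,c)$ at $p$ yields a linear condition on $(a,b,c)$ whose coefficients are modular functions of $(E_1, \eta)$.

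Second, I would deduce that the resulting map $\psi: (E_1, \eta) \mapsto [a:b:c] \in \PP^1$ factors through the forgetful projection $(E_1, \eta) \mapsto [E_1]$, which is precisely $\pi$. The key point is that the coefficients of the linear form depend symmetrically on the seven generators of the cyclic subgroup $\langle \eta \rangle$, and hence only on $E_1$ itself. Under the canonical identification of the target $\PP^1 = \PP(\cN^\ast_{X(E), \cE/\cB_D})^\ast$ with the $j$-line --- via the normal direction in $\cB_D$ at $X(E)$ corresponding to each elliptic deformation factor --- this factorization identifies $\psi$ with $\pi$.

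The main obstacle is the explicit bookkeeping of the $\rho^i$-eigensections on $F_1 \to E_1$ and the verification that the resulting algebraic expression truly encodes only $E_1$. I would circumvent most of this by rigidity: once the degree and ramification of $\psi$ are seen to match those of $\pi$ (degree $8$ with profile $(4,4)$ over $j = 0$ and $j = 12^3$ and profile $(7,1)$ over $\infty$), and the two cusps $\cC_E^{(iii)}$ and $\cC_E^{(iv)}$ over $\infty \in X_0(7)$ are shown to map to the distinguished normal direction corresponding to the cuspidal ``elliptic curve'' (using the explicit Prym differentials for types (iii) and (iv) constructed in Section 8), the map $\psi$ is pinned down uniquely, giving $\psi = \pi$.
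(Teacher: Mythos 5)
Your first two paragraphs follow the same route as the paper: trivialize the $\eta^i$ near the node $p=(o\sim o')$, read off the kernel of $\cP^*$ from the vanishing of the $dx\,dy$-coefficient $a_oa_{o'}+b_ob_{o'}+c_oc_{o'}$, and then argue that the resulting point of $\PP(\Ker\cP^*)^*$ depends only on $E_1$. But the justification you give for that last, crucial step is a non sequitur: symmetry of the coefficients in the generators of $\langle\eta\rangle$ only shows that they depend on the \emph{subgroup} $\langle\eta\rangle$, i.e.\ that $\psi$ is well defined on $Y_0(7)$ --- which is already part of the setup --- and does not show that they are independent of $\langle\eta\rangle$, which is what factorization through the $j$-line requires. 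The paper's actual reason is different: after trivializing the $\eta^i$ near $p$, the multiplication on stalks sends $(a,b,c)$ to $a+b+c\in(\Omega_C\otimes\omega_C)_p$, so the target point $[a_{o'}:b_{o'}:c_{o'}]$ is cut out by the germs of $a,b,c$ in the local ring $\cO_{C,p}$ of the \emph{base} curve; this local ring sees only $C=E_1\cup_p E$ (its total quotient ring is the product of the function fields of $E_1$ and $E$), hence the value depends only on $E_1$ and conversely determines it. That is the content you would need to supply.

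Your fallback ``rigidity'' argument does not repair this. First, it is circular: the degree of $\psi$ being $8$ is precisely Corollary \ref{degS2}, which in the paper is \emph{deduced from} this proposition, and establishing the ramification profile of $\psi$ over $j=0$ and $j=12^3$ independently would require exactly the explicit analysis you are trying to avoid (also, the profiles are $(3,3,1,1)$ and $(2,2,2,2)$, not $(4,4)$; only the total ramification on each fibre is $4$). Second, even granted identical branch points and ramification profiles over three points, two degree-$8$ maps $\PP^1\ra\PP^1$ need not coincide: at best one gets $\psi=\pi\circ\alpha$ for some automorphism $\alpha$ of the source, and fixing the two cusps leaves a one-parameter family of candidates for $\alpha$, so matching the cusps $\cC_E^{(iii)}$ and $\cC_E^{(iv)}$ is not enough to force $\alpha=\mathrm{id}$. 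The direct local-ring argument at the node is both necessary and sufficient here; the rigidity detour should be dropped.
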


\begin{proof}

 Let $o$, respectively $o'$ be the zero element of $E$ respectively $E_1$ with local coordinates $x$ respectively $y$. Set
 $\alpha=a_o dx , \beta=b_o dx,  \gamma=c_o dx $ and  $\alpha=a_{o'} dy , \beta=b_{o'} dy,  \gamma=c_{o'} dy$ the local
 description of elements of  $(\omega_{C,p} \otimes \omega_{C,p})$ around the node $p= (o \sim o')$.  As in the proof of Proposition  \ref{degS1}, 
we have that for an element $(a,b,c)$ in the kernel of $\cP^*$ the coefficient of $dxdy$ must vanish, i.e.  it satisfies
\begin{equation} \label{condition2}
a_oa_{o'} +b_ob_{o'} + c_oc_{o'}=0.
\end{equation}
in a neighborhood of the node.
Considering the dual map one sees that the fiber of $\widetilde \cP$ over a point $[a_o, b_o, c_o ] \in \PP(\Ker \cP^*)^* \subset \PP^2$
with $c_o = -a_o-b_o$,
corresponds to the pairs $(E_1, \langle \eta \rangle)$ such that the local functions $a,b,c \in \cO_p$ take the values  $[a_{o'}, b_{o'}, 
c_{o'}]$ around the $o' \in E_1$ with $c_{o'} = -a_{o'}-b_{o'}$ and they verify \eqref{condition2}.  This determines completely the triple 
$[a_{o'}, b_{o'}, c_{o'}]$ and it depends only on the values at the node $o' \in E_1$ of the base curve. 
Note that $a,b,c$ are elements of the local ring $\cO_p$ which determines the curve $E_1$ uniquely. In fact, its 
quotient ring is the direct product of the function fields of $E_1$ and $E$ which in turn 
determines the curves. Therefore the map $\psi$ can be identified with the projection $(E_1, \langle \eta \rangle) \mapsto E_1$.
\end{proof}

As an immediate consequence we have

\begin{cor}\label{degS2}
 The local degree of  the Prym map $\widetilde {\pr}_{2,7}$ along the divisor 
$\cS_2$ is 8.
\end{cor}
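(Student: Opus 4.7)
The plan is to combine the preceding Proposition with the deformation-theoretic principle already invoked in the treatment of $\cS_1$. First I would recall that, because Proposition \ref{p9.5} guarantees surjectivity of the conormal map $n^{*}$ along $\cS_2$, the hypothesis of \cite[I, Lemma 3.2]{ds} is satisfied. That lemma then lets us replace the local degree of $\widetilde{\pr}_{2,7}$ along $\cS_2$ by the degree of the induced morphism $\widetilde{\cP}\colon \widetilde{\cS}_2 \to \widetilde{\cE}$ between the exceptional divisors of the blow-ups, and the question becomes fibrewise: it suffices to compute the degree of the restricted map over a general point $[X(E)]\in \cE$.

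Next I would use the identifications already set up. Since $\widetilde{\cS}_2 \simeq \cS_2$, the fibre of $\widetilde{\cP}$ over $[X(E)]$ is the map $\psi\colon \PP^1 \to \PP^1$, where the source $\PP^1$ is the modular curve $X_0(7)$ (parametrising the type-(ii) coverings together with the two cusps corresponding to $\cC_E^{(iii)}$ and $\cC_E^{(iv)}$), and the target $\PP^1$ is the fibre of the exceptional $\PP^1$-bundle $\widetilde{\cE}\to\cE$ over $[X(E)]$. By the preceding Proposition, this map $\psi$ is the classical modular covering $\pi\colon X_0(7) \to X_0(1) \simeq \PP^1$, $(E_1, \langle \eta\rangle) \mapsto E_1$, forgetting the level structure.

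Finally I would conclude by invoking the known degree of $\pi$. The covering $\pi\colon X_0(7) \to \PP^1$ has degree $[\mathrm{SL}_2(\ZZ):\Gamma_0(7)] = 8$, as already recorded in the discussion preceding the Proposition (where this was used, together with the Hurwitz formula and the ramification profile at $j=0$, $j=12^{3}$ and $\infty$, to see that $X_0(7)$ has genus zero). Putting these three steps together gives
\[
\deg\bigl(\widetilde{\pr}_{2,7}\bigr)_{|\cS_2} \;=\; \deg \widetilde{\cP}_{|\widetilde{\cS}_2} \;=\; \deg \psi \;=\; \deg \pi \;=\; 8.
\]
There is no real obstacle here: the work has already been done in Proposition \ref{p9.5} (to justify the reduction to the exceptional divisors) and in the previous Proposition (to identify $\psi$ with $\pi$); the corollary is a bookkeeping step that packages these facts with the classical index computation for $\Gamma_0(7)\subset\mathrm{SL}_2(\ZZ)$.
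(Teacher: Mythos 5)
Your proposal is correct and follows the paper's own route exactly: the paper states this corollary as an immediate consequence of the preceding Proposition identifying $\psi$ with the modular projection $\pi\colon X_0(7)\to\PP^1$, whose degree $8$ was already recorded in the discussion before that Proposition. Your write-up simply makes explicit the same chain of reductions (Proposition \ref{p9.5}, the Donagi--Smith lemma, and the identification $\psi=\pi$) that the paper leaves implicit.
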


Using Proposition \ref{degS1} and Corollary \ref{degS2} we conclude that the degree of the Prym map $\widetilde{\pr}_{2,7}$
is 10, which finishes the proof of Theorem \ref{main-theorem}

\begin{rem} \label{polygons}
The moduli interpretation of $X_0(7) \setminus Y_0(7)$ is given by the {\it N\'eron polygons}:  one of the cusps represents a {\it 1-gon},
that is a nodal cubic curve, corresponding to the covering $\cC_E^{(iii)}$  and the other represents a {\it 7-gon}, that is
7 copies of $\PP^1$ with the point 0 of one attached to the point $\infty$ of other in a closed chain, which corresponds to the covering 
$\cC_E^{(iv)}$ (see \cite[IV.\S 8]{s2}).
\end{rem}

\end{document}